\definecolor{gr}{rgb}   {0.,   0.69,   0.23 }
\definecolor{bl}{rgb}   {0.,   0.5,   1. }
\definecolor{mg}{rgb}   {0.85,  0.,    0.85}
\definecolor{yl}{rgb}   {0.8,  0.7,   0.}
\definecolor{or}{rgb}  {0.7,0.2,0.2}
\tikzset{
	ddot/.style={circle,fill=white,draw=black,inner sep=0pt,minimum size=0.8mm},
	>=stealth,
	}
\tikzset{
	ddot2/.style={circle,fill=black,draw=black,inner sep=0pt,minimum size=0.8mm},
	>=stealth,
	}
\newtheorem{theorem}{Theorem} [section]
\newtheorem{lemma}[theorem]{Lemma}
\newtheorem{proposition}[theorem]{Proposition}
\newtheorem{remark}[theorem]{Remark}
\newcommand{\1}{\hspace{0.5mm}\text{I}\hspace{0.5mm}}
\newcommand{\II}{\text{I \hspace{-2.8mm} I} }
\newcommand{\III}{\text{I \hspace{-2.9mm} I \hspace{-2.9mm} I}}
\newcommand{\noi}{\noindent}
\newcommand{\Z}{\mathbb{Z}}
\newcommand{\R}{\mathbb{R}}
\newcommand{\T}{\mathbb{T}}
\let\Re=\undefined\DeclareMathOperator*{\Re}{Re}
\let\Im=\undefined\DeclareMathOperator*{\Im}{Im}
\let\P= \undefined
\let\om=\omega
\newcommand{\P}{\mathbf{P}}
\newcommand{\E}{\mathbb{E}}
\newcommand{\al}{\alpha}
\newcommand{\dl}{\delta}
\newcommand{\eps}{\varepsilon}
\newcommand{\g}{\gamma}
\newcommand{\Si}{\Sigma}
\newcommand{\ft}{\widehat}
\newcommand{\wt}{\widetilde}
\newcommand{\cj}{\overline}
\newcommand{\dt}{\partial_t}
\renewcommand{\o}{\omega}
\def\cI{{\mathcal I}}
\renewcommand{\P}{\mathbb P}
\newcommand{\les}{\lesssim}
\newcommand{\ind}{\mathbf 1}
\newcommand{\pa}{\partial}
\def\e{\eps}
\renewcommand{\H}{\mathcal{H}}
\newcommand{\HS}{\textup{HS}}
\newtheorem*{ackno}{Acknowledgements}
\numberwithin{equation}{section}
\numberwithin{theorem}{section}
\begin{document}
\baselineskip = 13.5pt

\title[GWP of the energy-critical SHNLW]
{Global well-posedness of the energy-critical 
stochastic Hartree nonlinear wave equation} 

\author[G.~Li, L.~Tao, and T.~Zhao]
{Guopeng Li, Liying Tao, and Tengfei Zhao}

\address{
Guopeng Li, 
School of Mathematics and Statistics, Beijing Institute of Technology,
Beijing 100081,
China and School of Mathematics,
The University of Edinburgh,
and The Maxwell Institute for the Mathematical Sciences,
James Clerk Maxwell Building,
The Kings Buildings,
Peter Guthrie Tait Road,
Edinburgh,
EH9 3FD,
United Kingdom
}
\email{guopeng.li@bit.edu.cn}

\address{Liying Tao,
Graduate School of China Academy of Engineering Physics, Beijing, 100088, China}
\email{taoliying20@gscaep.ac.cn}

\address{Tengfei Zhao,
School of Mathematics and Physics, University of Science and Technology Beijing, Beijing, 100083, China}
\email{zhao\underline{ }tengfei@ustb.edu.cn}%

\subjclass[2020]{35L71, 35R60, 60H15}

\keywords{stochastic nonlinear wave equation;
Hartree potential; global well-posedness; 
energy-critical; perturbation approach}

\begin{abstract}
We consider the Cauchy problem for the stochastic Hartree nonlinear wave equations~(SHNLW)
with a cubic convolution nonlinearity
and an additive stochastic forcing
on the Euclidean space. 
Our goal in this paper is two-fold.
(i)~We study the defocusing
energy-critical SHNLW on $\mathbb R^d$, for $d \geq 5$,
and prove that they are globally well-posed  
with deterministic initial data in the energy space. 
(ii)~Next, we consider the well-posedness of the defocusing energy-critical SHNLW with randomized initial data
below the energy space.
In particular, when $d=5$, we prove that it is almost surely globally well-posed.  
As a byproduct,
by removing the stochastic forcing
our result covers 
the study of the (deterministic) 
Hartree nonlinear wave equation~(HNLW)
with randomized initial data below the energy space.

The main ingredients
in the globalization argument
involve 
the probabilistic perturbation approach by  
B\'enyi-Oh-Pocovnicu~(2015) and Pocovnicu~(2017), time integration by parts trick
of Oh-Pocovnicu~(2016),
and an estimate of the Hartree potential energy.

%
 
\end{abstract}

\maketitle

\tableofcontents

\section{Introduction}

\subsection{Stochastic Hartree nonlinear wave equation}
We study the following Cauchy problem 
for the defocusing energy-critical 
nonlinear wave equation (NLW) with a stochastic forcing
on $\R^d$ for $d\geq 5$:
\begin{equation}
\label{eq_SHW}
\begin{cases}
\pa_t^2 u-\Delta u+
(|\cdot|^{-4}\ast u^2)u=\phi\xi   \\
(u,\pa_tu)|_{t=0}=(u_0,u_1),
\end{cases}
\quad (t,x)\in\R_+\times\R^d,
\end{equation}

\noi
where the unknown
$u$ is a real-valued function,
$\xi$ is the space-time white noise on $\R_+\times\R^d$, and
$\phi$ is a bounded operator on $L^2(\R^d)$.
The nonlinearity we consider in \eqref{eq_SHW} is of the Hartree type,
namely, it is given by a cubic convolution, 
see \eqref{V} below. 
For this reason, 
we refer to \eqref{eq_SHW} as the
stochastic {\it Hartree} nonlinear wave equation (SHNLW).
We say that $u$ is 
a solution to \eqref{eq_SHW}, if it satisfies the
following Duhamel formulation (= mild formulation):
\begin{align}
\label{Duhamel}
\begin{aligned}
u(t)=&S(t)(u_0,u_1)
-\int_0^t 
Q(t-t') 
(|\cdot|^{-4}\ast u^2)u(t')dt' +
\int_0^t 
Q(t-t') 
\phi\xi(dt'),
\end{aligned}
\end{align}

\noi
where $Q(t)$  and $S(t)$ are defined by
\begin{equation}
\label{S(t)}
Q(t)= 
\frac{\sin( t |\nabla|)}{|\nabla|} 
\quad
\text{and}
\quad
S(t)(\phi_0, \phi_1)
=  \dt Q(t) \phi_0
+Q(t)  \phi_1.
\end{equation}

\noi
The last term on the right-hand side of 
\eqref{Duhamel}
represents the effect of stochastic forcing and 
is known as
the stochastic convolution:
\begin{equation}
\label{Psi}
\begin{split}
\Psi(t):= \int_0^t 
Q(t-t')  
\phi\xi(dt'),
\end{split}
\end{equation}
which solves the linear stochastic wave equation:
\begin{equation*}
\begin{cases}
\pa_t^2\Psi-\Delta \Psi=\phi\xi\\
(\Psi,\dt\Psi)|_{t=0}=(0,0).
\end{cases}
\end{equation*}

\noi For a precise definition of the stochastic convolution $\Psi$, see Subsection \ref {Subs-sto}.
In order to state our result, we record the following:
If $\phi\in \HS(L^2;H^s)$ for some real number $s$, it 
is known that $\Psi\in C(\R_+;H^{s+1}(\R^d))$ almost surely; see \cite{DZ14, BLL23}.

Our main goal in this paper is to prove  almost sure global well-posedness of~\eqref{eq_SHW}
in the following two situations:
\begin{itemize}
\item[(i)]
with deterministic initial data in the energy space
and $\phi  \in \HS(L^2;L^2)$, see Theorem~\ref{thm-d-i};

\smallskip
\smallskip
\item[(ii)]
with randomized initial data below the energy space.
In this case, however, we need to take a smoother noise, see Theorem~\ref{thm-r-i}.

\end{itemize}

Let us first go over some backgrounds on the Hartree NLW on $\R^d$:
\begin{equation}
\label{Hartree-Wave}
\pa_t^2 u-\Delta u+(V\ast u^2)u=0,
\end{equation}
where $V$ is given by
\begin{equation}\label{V}
V(x)=\frac{\mu}{|x|^{\gamma}},
\qquad
\mu\in\R,\quad x\in\R^d,\quad  0<\gamma<d.
\end{equation}

\noi
Here, the $V$ describes the two-body interaction potential between atoms 
%
and is known as the {\it Hartree potential}, 
which has garnered significant interest in the physics community; see, for example, \cite{H28, G60}.

In this paper, we focus on the 
{\it energy-critical}
setting by taking $\mu=1$ and $\gamma=4$ in \eqref{V}.
%
In this case, the equation \eqref{Hartree-Wave} reduces to the following Hartree nonlinear wave equation (HNLW):

\noi
\begin{equation}
 \label{HNLW}
  \pa_t^2 u-\Delta u+( |\cdot|^{-4} \ast u^2)u=0,
\end{equation}

\noi
with 
the associated energy (= Hamiltonian) given by 
\begin{equation}\label{energy}
E(u,\pa_t u)=
\frac{1}{2}\int_{\R^d}(|\pa_t u(x)|^2+|\nabla u(x)|^2)dx+\frac{1}{4}\iint_{\R^d\times \R^d}\frac{|u(x)|^2|u(y)|^2}{|x-y|^4}dxdy.
\end{equation}

\noi
The equation 
\eqref{HNLW} 
enjoys the following scaling symmetry 
\begin{equation*}
u(t,x)\mapsto \lambda^{\frac{d-2}{2}}u(\lambda t,\lambda x)
\quad 
\text{for}
\quad
\lambda>0,
\end{equation*}

\noi
which also preserves the functional $E(u,\dt u)$.
For this reason,  
the equation \eqref{HNLW} is known as energy-critical
with energy space $\dot\H^1(\R^d)=\dot H^1(\R^d)\times L^2(\R^d)$. 
While SHNLW \eqref{eq_SHW} does not enjoy the scaling symmetry (when $\phi \neq 0$), 
we still refer to it as energy-critical.
The  heuristic provided by the scaling symmetry
dictates that  
\eqref{HNLW} is well-posed in 
$\dot \H^s$
for $s\ge 1$  and ill-posed in $\dot\H^s$ for $s<1$. 



Before discussing 
the well-posedess results of
HNLW \eqref{HNLW}, 
let us review some deterministic well-posedness theory of NLW with power-type nonlinearity.  
%
This  area has been studied extensively since the 1960s
and it is known that 
the defocusing energy-critical NLW is global well-posed in the energy space and the solutions scatters;
 see~ 
\cite{S68, S88, G90, K94, SS94, BG99, Tao06}.
On the other hand, when $s<1$, it is known to be ill-posed in $\H^s(\R^d):= H^s(\R^d)\times H^{s-1} (\R^d)$; 
see \cite{BK00, CCT03, Lebeau05, IM07, FO20, OOTz} for related ill-posedness results example.
%
%

%
In 1982,  Menzala and Strauss \cite{MS82} studied 
the well-posedness and the scattering theory  for HNLW; 
see also \cite{M89, H00, CG13, MZZ15}. 
More recently, Miao, Zhang, and Zheng 
\cite{MZZ14} 
proved the global well-posedness and 
scattering theory for the defocusing energy-critical HNLW \eqref{HNLW} in the energy space.
%
%
%
%

%
%

A primary motivation for our work is to investigate the defocusing energy-critical Hartree-type nonlinear wave equation (HNLW) in a probabilistic setting, incorporating both stochastic forcing and randomized initial data. For related results on energy-critical stochastic nonlinear wave equations and stochastic nonlinear Schrödinger equations, we refer the reader to \cite{OO20, CL22, BLL23, Zhang23}.

In this manuscript, we focus primarily on the nonlocal potential in the energy-critical case, namely, the nonlinearity of the form
$(|\cdot|^{-4} \ast u^2)\, u$.
We note that from the definition of the  nonlocal potential \eqref{V},
we  must have $0<\g<d$, for $d=$ dimensions.
Hence, when $\g=4$ our dimensions must be strictly greater than $4$,
which imposes the dimension restriction 
such that  $d \geq 5$.
Namely, 
 this (nonlocal) Hartree-type nonlinearity is well-defined only 
 for dimensions $d \geq 5$; in lower dimensions $3 \leq d \leq 4$, 
 the convolution becomes ill-defined due to singularity issues.

\subsection{Main results}
 
In recent years, there has been significant progress in the study of well-posedness for the stochastic nonlinear wave equation (SNLW) with rough stochastic forcing; see
\cite{OOR20, OO20, CL22, GKOT22, OPT22, OWZ22, BLL23, GKO23, ORT23}, and the references therein. See also the survey \cite[Chapter 13]{DZ14}, for example.
In particular, Brun, Liu, and the first author \cite{BLL23} studied the defocusing energy-critical SNLW and proved its global well-posedness.

On the other hand, in his seminal works \cite{B94, B97}, Bourgain initiated the study of dispersive PDEs with random initial data.
Over the past two decades, this direction has seen substantial developments
\cite{Tz06, BT07, BT08-2, Oh09, CO12, D12, NORS12, BT14, LM14, LM16, SX16, CCMNS20, Bringmann21, Latocca21, KOC22}.
Regarding the construction of local-in-time solutions, randomization of the initial data allows one to go beyond the limitations of deterministic analysis, enabling the construction of solutions even in regimes where the equation is ill-posed; see, for example, Burq-Tzvetkov \cite{BT08-1}.
In particular, for the closely related works,
 almost sure global well-posedness of the energy-critical NLW below the energy space 
 (with random initial data), we refer to the pioneering works of Pocovnicu \cite{P17} and Oh-Pocovnicu \cite{OP16}.

Our main results are twofold:
\begin{itemize}
\item[(1)] Almost sure global well-posedness of \eqref{eq_SHW} with deterministic initial data in the energy space (Theorem \ref{thm-d-i}), which extends the result of \cite{BLL23} (for power-type nonlinearities) to the nonlocal Hartree-type nonlinearity.

\item[(2)] Almost sure global well-posedness of \eqref{eq_SHW} below the energy space (on $\R^5$), which extends the results of \cite{P17, OP16} to the setting of a nonlocal (Hartree) nonlinearity and forced by additive noise, and it is unknown in the study of  \cite{BLL23}.

\end{itemize}



\medskip
\noi
{\bf Energy-critical SHNW with deterministic initial data on $\R^d$ for $d\geq 5$.}

Our first goal is to  extend  \cite{BLL23}  to the nonlocal (Hartree) nonlinearity.


%

\begin{theorem}
\label{thm-d-i}
Let $d\geq 5$ and $\phi\in \HS(L^2(\R^d);L^2(\R^d))$. 
Then, the defocusing energy-critical  stochastic Hartree nonlinear wave equation \eqref{eq_SHW}
is  globally well-posed in $\dot\H^1(\R^d)$. 



\end{theorem}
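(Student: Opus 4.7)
The plan is to implement the probabilistic perturbation strategy of B\'enyi-Oh-Pocovnicu and Pocovnicu by decomposing $u = v + \Psi$, where $\Psi$ is the stochastic convolution defined in \eqref{Psi}. Since $\phi \in \HS(L^2;L^2)$, one has $\Psi \in C(\R_+; H^1(\R^d))$ almost surely, together with almost sure bounds on $\Psi$ in the natural Strichartz spaces. The residual $v$ then satisfies the forced HNLW
\begin{equation*}
\pa_t^2 v - \Delta v + \bigl(|\cdot|^{-4} \ast (v+\Psi)^2\bigr)(v+\Psi) = 0, \qquad (v,\pa_t v)|_{t=0} = (u_0,u_1),
\end{equation*}
whose local well-posedness in $\dot\H^1$ follows from standard Strichartz-based contraction for HNLW, with Hartree nonlinearities estimated via the Hardy-Littlewood-Sobolev (HLS) inequality and the $\Psi$-dependent source absorbed by the almost sure bounds on $\Psi$. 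The task then reduces to extending this local solution to any prescribed time horizon $T>0$.

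At the heart of the argument is a Gronwall-type bound for the deterministic Hartree energy
\begin{equation*}
E(v,\pa_t v) = \tfrac{1}{2}\int_{\R^d}(|\pa_t v|^2 + |\nabla v|^2)\,dx + \tfrac{1}{4}\iint_{\R^d\times\R^d}\frac{v(x)^2 v(y)^2}{|x-y|^4}\,dx\,dy.
\end{equation*}
Differentiating in time and using the equation for $v$ yields
\begin{equation*}
\tfrac{d}{dt} E(v,\pa_t v) = -\int_{\R^d} \pa_t v \cdot \mathcal{R}(v,\Psi)\, dx,
\end{equation*}
where $\mathcal{R}(v,\Psi)$ collects the terms obtained by expanding $(|\cdot|^{-4}\ast(v+\Psi)^2)(v+\Psi) - (|\cdot|^{-4}\ast v^2)v$, each of which carries at least one factor of $\Psi$. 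The terms of lowest degree in $\Psi$ (a single copy) are the most delicate, since one cannot afford to place $\pa_t v$ in $L^\infty_t L^2_x$ alongside a quadratic expression in $v$ at the $\dot H^1$ level. Here I would invoke the time integration by parts trick of Oh-Pocovnicu to shift $\pa_t$ off $v$ onto the $\Psi$-dependent factors, reducing to $\pa_t \Psi$ (handled via the linear wave equation plus an It\^o correction) together with spatial pairings controlled by HLS and Sobolev embeddings; combined with the HLS bound $\iint \tfrac{v^2 v^2}{|x-y|^{4}}\lesssim \|v\|_{\dot H^1}^4$, this yields a deterministic bound $E(v(t),\pa_t v(t)) \leq C(T,\omega)$ on $[0,T]$ almost surely.

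With energy control in hand, globalization proceeds via the perturbation argument. The deterministic global theory of Miao-Zhang-Zheng for \eqref{HNLW} provides, for any energy-space data, an a priori spacetime bound $\|w\|_{L^q_{t,x}(\R)} \leq C(E(w_0,w_1))$ together with a stability/perturbation lemma. I would partition $[0,T]$ into finitely many subintervals on each of which the $\Psi$-driven forcing in the $v$-equation is small in the relevant Strichartz norm; this is possible because of the almost sure Strichartz bounds on $\Psi$ and the now-bounded energy of $v$. On each subinterval the perturbation lemma compares $v$ with the pure deterministic HNLW solution starting from $v$'s endpoint data, yielding a finite $L^q_{t,x}$ bound; concatenating over the finitely many subintervals produces $\|v\|_{L^q_{t,x}([0,T])}<\infty$, which the local theory turns into well-posedness beyond $T$.

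The main obstacle will be closing the energy estimate for the mixed Hartree terms. Unlike the power nonlinearity treated in \cite{BLL23}, every term in $\mathcal{R}(v,\Psi)$ is nonlocal and must be estimated through HLS paired with Sobolev embeddings and fractional Leibniz rules, while the time integration by parts requires careful bookkeeping of both $\pa_t v$ and $\pa_t\Psi$ in adapted Strichartz spaces. Maintaining the critical HLS scaling for the singular kernel $|x|^{-4}$ without spending $L^\infty_t L^2_x$ on $\pa_t v$ is the decisive calculation, and it is precisely here that the dimension restriction $d\geq 5$ enters through the well-definedness of the convolution and the admissibility of the relevant exponents.
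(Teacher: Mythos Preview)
Your overall architecture (first-order expansion $u=v+\Psi$, local theory via Strichartz/HLS, energy control, then perturbation against the deterministic global solution of Miao--Zhang--Zheng) matches the paper, but the energy step is where your proposal diverges and runs into trouble.

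You propose to obtain a \emph{pathwise} bound on $E(v,\pa_t v)$ by differentiating in time and handling the dangerous single-$\Psi$ terms via the Oh--Pocovnicu integration-by-parts trick. In the paper, that trick is reserved for Theorem~\ref{thm-r-i} (random data), and for good reason: after integrating by parts in $t$, the resulting term $\int v^2\,(|\cdot|^{-4}\ast v\,\pa_t\Psi)\,dx$ must be controlled, and closing Gronwall (as in the paper's Proposition~\ref{D-bound}) needs $\pa_t\Psi\in L^{10}_tL^{10/3}_x$ for $d=5$, hence $\phi\in\HS(L^2;H^1)$ via Lemma~\ref{regularity of Psi}\,(iv). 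Theorem~\ref{thm-d-i} only assumes $\phi\in\HS(L^2;L^2)$, which gives merely $\pa_t\Psi\in C_tL^2_x$; the required integrability is unavailable, and there is no ``It\^o correction'' here since $v$ solves a pathwise deterministic equation. Moreover, even granting extra regularity on $\phi$, the Oh--Pocovnicu argument as written closes only in $d=5$ (see the paper's Remark~\ref{d=5}), whereas Theorem~\ref{thm-d-i} is stated for all $d\ge 5$.

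The paper instead establishes the energy bound on the \emph{full} solution $u$ (not on $v$): it applies It\^o's lemma to $E(u_N,\pa_t u_N)$ for the frequency-truncated problem~\eqref{truncated SNLW}, controls the martingale term by Burkholder--Davis--Gundy and Cauchy--Schwarz (absorbing $\tfrac12\E\|\pa_t u_N\|_{L^\infty_tL^2_x}^2$ into the left side), and passes to the limit via Lemma~\ref{u_N conv} and Fatou. This yields a bound on $\E\big[\sup_{t\le T}E(u,\pa_t u)\big]$ depending only on $\|\phi\|_{\HS(L^2;L^2)}$; the pathwise control needed for the perturbation step is then extracted through the stopping times $\tau_k,\gamma_k$ of Lemmas~\ref{LEM:st1}--\ref{LEM:st2}. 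Your write-up should replace the pathwise/Oh--Pocovnicu energy argument by this It\^o/BDG argument on $u$; the rest of your outline (subinterval decomposition and iterated application of Lemma~\ref{perturbation} against the global deterministic bound of Lemma~\ref{st-bound}) is essentially what the paper does.
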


As mentioned earlier, Theorem \ref{thm-d-i} complements to existing global well-posedness results of  energy-critical stochastic nonlinear dispersive PDEs. 
In particular, this is the first result for the nonlocal nonlinearity, to our knowledge.

When $\phi=0$, 
SHNLW \eqref{eq_SHW} reduces to the HNLW \eqref{HNLW}. 
Therefore, 
similar to the discussion of the ill-posedness of equation \eqref{HNLW} 
 in $\dot \H^s(\R^d)$ for $s<1$,
 we anticipate that 
Theorem \ref{thm-d-i} is sharp in terms of the regularity of the initial data. 
%
%
On the other hand, the restriction on the regularity of $\phi$ comes from  the
use of the Strichartz estimate
in studying the space-time 
regularity of the stochastic convolution; see Lemma \ref{regularity of Psi}.

Let us briefly discuss the idea  of proof of Theorem \ref{thm-d-i}.
By writing the solution $u$ to \eqref{eq_SHW}
in first order expansion 
\cite{Mckean95, B96, DD02}: 
\begin{align*}
u=v+\Psi,
\end{align*}
we are reduced to showing the global well-posedness of the following
perturbed HNLW satisfied by the residual term $v$:
\begin{equation}
\label{perturbed v}
\begin{cases}
\pa_t^2 v-\Delta v+\mathcal{N}(v+\Psi)=0\\
(v,\pa_t v)|_{t=0}=(u_0,u_1)
\end{cases}
\end{equation}

\noi
in $\dot \H^1(\R^d)$, 
where $\Psi$ is the stochastic convolution given by  \eqref{Psi} and $\mathcal{N}(u)=(|\cdot|^{-4}\ast|u|^2)u$.
Hence, the uniqueness in Theorem \ref{thm-d-i} refers to the part $v$ of a solution $u$.

%





Combining the Strichartz estimates and the
space-time regularity of  stochastic
convolution (Lemma~\ref{regularity of Psi}),
one can easily prove  \eqref{perturbed v} is local well-posed, that is, for almost all 
$\omega$, there exists  
$T>0$ such that there exists a unique 
$\vec v$ in a subspace of $C([0,T); \dot \H^1(\R^d))$, 
where $\vec v=(v,\dt v)$.
The main difficulty of Theorem~\ref{thm-d-i} lies in establishing the global well-posedness of \eqref{eq_SHW},
which is reduced to global well-posedness of \eqref{perturbed v} for the residual term $v$. 
Then, by employing Ito's lemma 
and the fact that
 $\phi\in \HS(L^2(\R^d);L^2(\R^d))$, 
we can show an a priori bound of the solution $v$ to \eqref{perturbed v}
\begin{align}
\label{v-ebd}
\| (v(t), \dt v(t)) \|_{L^{\infty}([0,T]; 
\dot \H^1(\R^d)) } 
\leq C(\omega,T,\|\phi\|_{\HS(L^2(\R^d);L^2(\R^d))})<\infty 
\end{align}
for almost all $\omega\in \Omega$ and any $T>0$.

Due to the energy-critical nature of the equation,
having an a priori bound on the $\dot\H^1(\R^d)$ norm of $\vec v$ is not sufficient.
Instead, we need to have a control on the growth of the Strichartz norm of $v$, 
for which we adapt the   
{\it probabilistic perturbation theory}
introduced in 
B\'enyi, Oh, and Pocovnicu \cite{BOP15'}
 and   Pocovnicu \cite{P17}; 
see Lemma \ref{perturbation} below. 
%
%
%
%
We then implement an iterative argument,
combining with the perturbation lemma 
 and the global space-time bound 
(Lemma \ref{st-bound}),  
and prove a global-in-time existence of a solution 
$u$ in the following class
\begin{equation*}
(\Psi,\pa_t\Psi)+C(\R_+;\dot\H^1(\R^d))\subset C(\R_+;\dot\H^1(\R^d)).
\end{equation*}




%

\begin{remark}\rm

Compared to \cite{BLL23}, there are two main differences:
(1) First, we are able to handle a more complicated nonlinearity in the current manuscript.
(2) Second, on $\mathbb{R}^d$, \cite{BLL23} does not establish almost sure global well-posedness for the SNLW with randomized initial data below the energy space. In contrast, our second main result (Theorem \ref{thm-r-i}) proves almost sure global well-posedness for \eqref{eq_SHW} on $\mathbb{R}^5$ with Wiener-randomized initial data below the energy space; see also \cite[Remark 1.3]{BLL23}.
\end{remark}

\begin{remark}\label{remark-Td}
\rm

The reason we can handle rough stochastic noises
in Theorem \ref{thm-d-i} 
is due to the integrability gain of the wave propagator;
see Lemma \ref{regularity of Psi}.
Moreover, as considered in ~\cite{BLL23},
for the case of torus $\T^d$, we may expect
$\phi$ rougher due to the boundedness of the torus, 
which gives better spatial integrability of the 
Stochastic convolution; see also \cite{OOPT23}.


\end{remark}

\begin{remark}
\rm
The perturbation theory has played an important role in the study of deterministic energy-critical NLW and 
nonlinear Schr\"odinger euqation (NLS); see, for example,~\cite{TVZ07, CKSTT08, KM08,  MXZ11, KOPV12, MZZ14}.
This perturbation theory has also been successfully adapted to the probabilistic setting in various works
\cite{BOP15, 
LM16, OP16, 
 OP17, P17,
 BOP-19, BOP-19-2, OOP19,
 DLM20}.


\end{remark}


%
%

\medskip
\noi
{\bf SHNLW with Wiener randomized initial data {\it below} energy space on $\R^5$.}

Our next goal is to study the Cauchy problem \eqref{eq_SHW}
with randomized initial data  below the energy space,
which is excepted to be {\it ill-posed} 
for the deterministic initial data; see  discussion in Remark \eqref{RM:ill}.

Before we state our second main result,
let us now briefly discuss our randomized initial data.
More precisely,
given a pair of functions $(u_0,u_1)$ on $\R^d$, 
we define the following 
Wiener decomposition (see \cite{LM14, BOP15, BOP15'}) by setting
\begin{equation}
\label{R (u0,u1)}
(u_0^\omega, u_1^\omega)
=\Big(\sum_{n\in\Z^d} g_{n,0}(\omega)\psi(D-n)u_0,
\sum_{n\in\Z^d}g_{n,1}(\omega)\psi(D-n)u_1\Big), 
\end{equation}

\noindent
where $\{g_{n,j}\}_{n\in \Z^d}$, $j=1,2$ 
are mean-zero random variables and  independent of the noise
$\xi$ 
with suitable conditions;  see Subsection \ref{SUB:WR} for further details.
Here, $\psi(D-n)$ is the smooth Fourier multiplier onto the cube $[-1,1]^d+n$ for $n\in \Z^d.$ 
%
Given $(u_0,u_1) \in \H^s(\R^d) $ for some appropriate $s\geq0$,
it is easy to see that its Wiener randomization 
\eqref{R (u0,u1)} lies in $\H^s(\R^d)$ almost surely.
One can also show that
there is no smoothing upon randomization in terms of differentiability.
However, the main advantage of the randomization in  \eqref{R (u0,u1)} is the gain of the integrability;
see Remark \ref{ramdf}.
This gain of space-time integrability allows us to take random
initial data below the energy space.

\begin{theorem}
\label{thm-r-i}
\textup{(i)}
 ~\textup{(Local well-posedness)}
Let   $0\leq s<1$ and  $\phi\in \HS(L^2(\R^5); L^2(\R^5))$.
Given~$(u_0,u_1)\in \dot \H^s(\R^5)$, let $(u_0^\o,u_1^\o)$ be its Wiener randomization 
  defined in \eqref{R (u0,u1)} satisfying \eqref{dis}.
Then the stochastic Hartree nonlinear wave equation \eqref{eq_SHW} is almost surely locally well-posed 
with respect to the randomization 
$(u_0^\omega, u_1^\omega)$
as initial data.
More precisely, 
there exists a (unique) solution $u$ to  \eqref{eq_SHW} with $(u,\dt u)|_{t=0}= (u_0^\o,u_1^\o)$ in the class:
\begin{equation*}
(S(t) (u_0^\o,u_1^\o),\dt S(t) (u_0^\o,u_1^\o))+(\Psi,\pa_t\Psi)+C([0,T];\dot\H^1(\R^5))
\subset C([0,T]; \dot \H^s(\R^5)),
\end{equation*}
where $T=T_\o$ is almost surely positive.
%
%

\vspace{0.4cm}
\textup{(ii)}~\textup{(Global well-posedness)}
Let $\frac12<s<1$ and  $\phi\in \HS(L^2(\R^5); H^1(\R^5))$.
Given~$(u_0,u_1)\in \H^s(\R^5)$, let $(u_0^\o,u_1^\o)$ be its Wiener randomization   defined 
in \eqref{R (u0,u1)} satisfying \eqref{dis}.
Then,
 the defocusing energy-critical stochastic Hartree nonlinear wave equation~\eqref{eq_SHW}
  is almost surely globally well-posed
with respect to the randomization  $(u_0^\omega, u_1^\omega)$ 
as initial data.
More precisely, 
there exists a  set
 $\Omega_{(u_0,u_1)} \subset \Omega$  
 of  probability $1$ such that, 
for any $\omega \in \Omega_{(u_0,u_1)}$,
there exists
a (unique) solution 
 $u$ to~\eqref{eq_SHW}
in the class:
\noindent
\begin{equation*}
(S(t) (u_0^\o,u_1^\o),\dt S(t) (u_0^\o,u_1^\o))
+(\Psi,\pa_t\Psi)+C(\R_+;\H^1(\R^5)) \subset C(\R_+; \H^s(\R^5)).
\end{equation*}

%

\vskip 0.3cm

 \end{theorem}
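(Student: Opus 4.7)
The plan is to extend the decomposition used in Theorem~\ref{thm-d-i} to a three-term ansatz
\[
u = z^\omega + \Psi + v, \qquad z^\omega(t) := S(t)(u_0^\omega, u_1^\omega),
\]
so that the residual $v$ solves the perturbed HNLW
\[
\pa_t^2 v - \Delta v + \mathcal{N}(v + z^\omega + \Psi) = 0, \qquad (v, \pa_t v)|_{t=0} = (0,0),
\]
with $\mathcal{N}(w) = (|\cdot|^{-4}\ast w^2)w$. The task is then to place $v$ in the energy space $\dot\H^1(\R^5)$ while treating $z^\omega + \Psi$ as a (random) forcing term of lower regularity. The crucial inputs are (a) the Hilbert--Schmidt regularity of $\phi$, which places $\Psi \in C(\R_+; H^1(\R^5))$ together with Strichartz control (Lemma~\ref{regularity of Psi}); and (b) the probabilistic Strichartz estimates for $z^\omega$ coming from the Wiener randomization \eqref{R (u0,u1)}, which give almost surely improved space-time integrability despite the fact that $(u_0^\omega, u_1^\omega) \notin \dot\H^1$.

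\textbf{Part (i): Local well-posedness.} First I would run a fixed point for $v$ in a Strichartz subspace of $C([0,T]; \dot\H^1(\R^5))$. Expanding $\mathcal{N}(v + z^\omega + \Psi)$ produces monomials of degree three in $v$, $z^\omega$, and $\Psi$. The pure $v^3$ term is handled by the standard energy-critical Hartree estimate (Hardy--Littlewood--Sobolev plus Strichartz, as in \cite{MZZ14}). For each mixed term, I bound $|\cdot|^{-4}\ast(\cdot)$ using HLS and distribute derivatives so that at least one $\nabla$ lands on a factor lying in $\dot H^1$; the factors of $z^\omega$ and $\Psi$ absorb the loss in differentiability through their extra $L^q_t L^r_x$ integrability on a short random time interval $[0,T_\omega]$. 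Contraction then closes on a small ball, yielding the almost sure local solution in the stated class.

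\textbf{Part (ii): Global well-posedness.} For $s > \tfrac12$ and $\phi\in \HS(L^2;H^1)$, both $z^\omega$ and $\Psi$ carry half a derivative more than what is needed for the local theory, and I would exploit this to derive an a priori $\dot\H^1$-bound on $\vec v = (v, \pa_t v)$ on any time interval $[0,T]$. Testing the equation for $v$ against $\pa_t v$ produces
\[
\frac{d}{dt} E(\vec v) = - \int_{\R^5} \big(\mathcal{N}(v + z^\omega + \Psi) - \mathcal{N}(v)\big) \pa_t v \, dx,
\]
whose right-hand side I expand into polynomial cross terms. The rough-in-time factor $\pa_t v$ is removed from the most dangerous pieces via the time integration by parts trick of Oh--Pocovnicu \cite{OP16}, transferring one $\pa_t$ onto the smoother factors $z^\omega$ and $\Psi$ (and onto the convolution $|\cdot|^{-4}\ast(\cdot)$ when beneficial). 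The resulting terms are estimated by Hardy--Littlewood--Sobolev, H\"older, and Sobolev embedding on $\R^5$, bounding the Hartree-type potential energy via expressions of the form $\||\cdot|^{-4}\ast(v\cdot w)\|_{L^{5/2}}\lesssim \|v\|_{L^{10/3}}\|w\|_{L^{10/3}}$, with a suitable random coefficient $C(\omega,T,\|\phi\|_{\HS})$ coming from the probabilistic Strichartz bounds and the Ito-type bound on $\Psi$. Gronwall then yields
\[
\sup_{t\in[0,T]} \|\vec v(t)\|_{\dot\H^1(\R^5)} \le C(\omega, T, (u_0,u_1), \phi) < \infty \quad \text{a.s.}
\]
Once this a priori bound is in hand, I would iterate the probabilistic perturbation lemma (Lemma~\ref{perturbation}) together with the deterministic global space-time bound (Lemma~\ref{st-bound}), exactly as in the proof of Theorem~\ref{thm-d-i}, to upgrade the a priori energy bound to a global Strichartz bound and hence to a global solution in the stated class.

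\textbf{Main obstacle.} The hard part is controlling, uniformly in $\omega$ on a set of full measure, the mixed Hartree potential terms such as $\int (|\cdot|^{-4}\ast(v\, z^\omega)) v \, \pa_t v \, dx$ and $\int (|\cdot|^{-4}\ast(z^\omega)^2) v \, \pa_t v \, dx$: the nonlocal convolution couples high and low regularity factors in a nontrivial way, and naively placing $z^\omega \in \dot H^s$ with $s<1$ loses too many derivatives. This is precisely where the gain of integrability from the Wiener randomization \eqref{R (u0,u1)}, combined with the time integration by parts that relocates a $\pa_t$ from the rough factor $\pa_t v$ onto the random factor $z^\omega$ (which is as smooth in $t$ as in $x$ because of the wave propagator), must be tuned carefully to close the energy estimate with the restriction $s>\tfrac12$ dictating exactly the threshold at which the scheme becomes subcritical.
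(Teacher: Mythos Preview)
Your overall architecture matches the paper's: decompose $u = z^\omega + \Psi + v$, run a contraction for (i), and for (ii) obtain an a priori energy bound via the Oh--Pocovnicu time-integration-by-parts trick, then feed this into the perturbation/iteration machinery. However, there is a genuine gap in the energy estimate for (ii), and a secondary structural issue.

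\textbf{The missing key estimate.} After integrating by parts in time, the boundary term is essentially $\int_{\R^5} v^2\,(|\cdot|^{-4}\ast(v\,\wt\Psi))\,dx$, which by HLS and H\"older is bounded by $\|v\|_{L^{10/3}}^3\|\wt\Psi\|_{L^{10/3}}$. To absorb this via Young's inequality and close Gronwall, you need $\|v\|_{L^{10/3}}^3 \lesssim E(v)^{9/10}$. But the Hartree potential energy $\iint |v(x)|^2|v(y)|^2|x-y|^{-4}\,dxdy$ does \emph{not} directly control $\|v\|_{L^{10/3}}$ the way the power-type NLW potential $\int |v|^{10/3}$ does; this is exactly the obstruction flagged in Remark~\ref{difficult}. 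The paper resolves it via Lemma~\ref{Riesz-Cha}: the Hartree potential energy dominates $\||\nabla|^{-1/4}v\|_{L^4}^4$, and interpolation with $\|\nabla v\|_{L^2}$ then gives $\|v\|_{L^{10/3}} \lesssim E(v)^{3/10}$ (see \eqref{10/3-norm}). Your proposal instead invokes the forward HLS bound $\||\cdot|^{-4}\ast(vw)\|_{L^{5/2}}\lesssim \|v\|_{L^{10/3}}\|w\|_{L^{10/3}}$, which goes in the wrong direction and leaves $\|v\|_{L^{10/3}}$ uncontrolled by the energy. Without the Riesz-potential lower bound, the Gronwall loop does not close. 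The same estimate is also what makes the Littlewood--Paley argument for the term containing $\langle\nabla\rangle\wt z$ (Case~2 in Proposition~\ref{D-bound}) produce a power of $E(v)$ not exceeding one, pinning down the threshold $s>\tfrac12$.

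\textbf{A structural point.} You propose to derive the a priori bound directly on $v$. The paper instead proves it uniformly for the frequency-truncated solutions $v_N$ (Proposition~\ref{D-bound}), which are globally smooth so the energy computation is justified, and then passes to the limit via the convergence mechanism of Proposition~\ref{condition LWP} (adapted from \cite{OP16}). Working directly with $v$ is formally the same, but since $v$ is only known locally a priori, the manipulations require this regularization/limit step to be rigorous; your outline should acknowledge it.

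For part (i), your derivative-distribution description is unnecessary: the paper's local argument (Proposition~\ref{local} with $f=\wt\Psi$) is purely at the level of the $X(I)=L^3_tL^{30/7}_x$ norm, with no fractional Leibniz needed; the randomized linear part and $\Psi$ enter only through $\|\wt\Psi\|_{X(I)}\le\eta$ on a short random interval.
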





In Theorem \ref{thm-r-i}, we allow randomized initial data below the energy space.
Our result considers the SHNLW \eqref{eq_SHW} with an additional noise term, thereby extending the work of \cite{OP16} (which has no forcing term).
Moreover, the additional regularity assumption on $\phi$ in Theorem~\ref{thm-r-i}~(ii) is required to establish global well-posedness.


The main strategy for proving Theorem \ref{thm-r-i} is based on 
the following first order extension 
 $$u=v+\wt \Psi,$$

\noi
where now $\wt \Psi$ solves 
\begin{align}
\label{Psi2}
\begin{cases}
\pa_t^2 \wt \Psi-\Delta \wt \Psi= \phi\xi\\
(\wt \Psi, \dt \wt \Psi)|_{t=0}=
(u_0^\o,u_1^\o).
\end{cases}
\end{align}

\noi
Namely, we have 
$$\wt \Psi= S(t) (u_0^\o,u_1^\o) +\Psi.$$

\noi 
Regarding Theorem \ref{thm-r-i}~(i), the main observation is that
$\wt \Psi$ satisfies the same space-time 
integrability of $\Psi$; see Lemma \ref{regularity of Psi} and Lemma \ref{P-est-S*}. 
Therefore, the  argument for local well-poseness 
of~ \eqref{eq_SHW} is identical to  that of 
Theorem \ref{thm-d-i} by studying 
the following equation of the residue term $v$ in $C([0,T];\dot\H^1(\R^5))$
\begin{equation}
\label{eq_SHW3}
\begin{cases}
\pa_t^2 v -\Delta v +
\mathcal{N}(v+\wt \Psi)
=0\\
(v,\pa_t v)|_{t=0}=(0,0).
\end{cases}
\end{equation}
See Remark \ref{RM:local}.

For the global well-posedness part, the main strategy 
is the same as that for proving Theorem~\ref{thm-d-i}. Namely, 
by having an a priori
energy bound \eqref{v-ebd},
we can prove Theorem~\ref{thm-r-i}~(ii) by employing the 
probabilistic perturbation argument.
The main difficulty appears in establishing the a priori energy bound~\eqref{v-ebd} on the growth of the solution $v$ to \eqref{eq_SHW3},
where we now need to handle the stochastic convolution and randomized linear solution simultaneously.
In the current setting, since the initial data does not belongs to energy space, we need to adapt
the probability tools for probability well-posedness theory of NLW. 
In \cite{BT14},
Burq and Tzvetkov introduced a 
Gronwall-type argument to obtain the energy bound of the $v$ in the context of defocusing
cubic NLW on $3$d boundaryless Riemannian manifold;
see also \cite{P17, OP17}.
However, in current setting, for the Hartree
nonlinearity, as in \cite{OP16},
the Gronwall-type argument does not seem to work.
In order to overcome this difficulty, we combine 
the time integration by parts trick introduced by \cite{OP16} (see Proposition~\ref{D-bound}) together with following 
estimate of the Hartree potential energy (see Lemma~\ref{Riesz-Cha}): for $d\geq 5$
\[
\Big\||\nabla|^{-\frac{d-4}{4}} u\Big\|^4_{L_{x}^4(\R^d)}
\les
\iint_{\R^d\times\R^d}\frac{|u(x)|^2|u(y)|^2}{|x-y|^4} dx dy.
\]

\noindent
We note that it is in this step that we need 
the extra regularity of $\phi$,  which ensures the space-time integrability  of  $\Psi$ and $\dt \Psi$; see Lemma~\ref{regularity of Psi}~(iv).

\medskip
We conclude this introduction by stating the following remarks.

\begin{remark} \rm
\label{RM:ill}

Note that we previously mentioned the defocusing energy-critical NLW with a power-type nonlinearity is ill-posed below the energy space.
In our setting with a Hartree-type nonlinearity, 
we similarly expect the defocusing energy-critical HNLW \eqref{HNLW} to be ill-posed in $\H^s(\R^d)$ for $s < 1$.
However, the existing methods {\it do not} seem to be directly applicable to \eqref{HNLW} due to the complexity of the nonlocal nonlinearity.
It would be interesting to investigate the ill-posedness of the defocusing energy-critical HNLW \eqref{HNLW} below the energy space; see \cite{MXZ08} for ill-posedness results in the Schrödinger case.

\end{remark}

\begin{remark}
\rm

(i) Theorem \ref{thm-r-i}, extends the results in \cite{P17, OP16} (for power-type nonlinearities and {\it no noise term}) to the setting with a nonlocal nonlinearity and additive noise.
We note that Theorem \ref{thm-r-i} (ii) is restricted to the case $d = 5$, due to the use of Gronwall's inequality in establishing the energy bound; see Remark \ref{d=5} and also the proof of Proposition \ref{D-bound}.
Moreover, by setting $\phi = 0$ in Theorem \ref{thm-r-i}, we recover the almost sure global well-posedness of the HNLW \eqref{HNLW} in dimension $d = 5$, which is analogous to the result in \cite{OP16}.

 (ii)
 Theorem \ref{thm-r-i} (ii), on $\R^5$, 
 however, holds only for $s > \frac12$. 
 This regularity loss appears in establishing a uniform probabilistic energy bound for approximating random
solutions (Proposition \ref{D-bound}). At this point, we do not know how to close this regularity gap.



\end{remark}

\begin{remark}
\rm

As in discussed in Remark \ref{remark-Td},
the regularity of $\phi$ comes from 
the time-space integrability
issue of the stochastic convolution.
For the case of $\T^d$,
we expect to be able to take $\phi\in\HS(L^2(\T^d);H^s(\T^d))$ 
even for $s<0$; see \cite[Theorem 1.2]{BLL23}.

\end{remark}

\begin{remark}
\rm

To be clear about the adaptedness
to filtrations of the constructed stochastic solutions:
since we have shown that our solution map  \eqref{Duhamel}
is a contraction, the Picard iteration can be used to construct the solution, and
it converges almost surely to the same fixed point (by the uniqueness). 
Each Picard iterate is adapted to the filtration generated by
the noise. 
Therefore, 
the almost sure convergence of the Picard iteration implies that the limiting solution is also adapted. 
See \cite[Remark 1.2]{OO20} for a similar discussion.
Moreover,
the random time is now defined as an entry time of the solution,
which is adapted to the filtration. 
Hence, it qualifies as a stopping time; see page 43 of \cite[(4.5) Proposition, Chapter 1]{RY99}.

\end{remark}

The rest of the paper is organized as follows. In Section \ref{Preliminaries}, 
we introduce some notations,
state regularity properties of the stochastic convolution,
recall the Wiener randomization,
and some probabilistic estimates.
We prove the local well-posedness and establish the stability results in Section \ref{Sec:Local}. 
In Section \ref{proof of th1}, we establish the global well-posedness results for Theorems \ref{thm-d-i} and \ref{thm-r-i}.

\section{Preliminaries}\label{Preliminaries}

In this section, we introduce some notations and go over some preliminary results.

\subsection{Notion, function spaces, and  preliminary results}
In the following, we denote that $A\lesssim B$ if $A\leq CB$ for some constants $C>0$.
We also write $A\ll B$ if the implicit constant should be regarded as small.

Given $ 0< q, r\le \infty$ and a time interval $I\subseteq \R$, 
we consider the mixed Lebesgue spaces $L^q(I;L^r(\R^d))$ of space-time functions $u(t,x) $.
We use notations such as
\[L^q_tL_x^r (I\times \R^d)=L^q(I;L^r(\R^d)),
\quad\text{and}
\quad
L^r_{t,x}(I\times \R^d)
=L^r(I;L^r(\R^d)) , \text{ when }  q=r. 
\]

\noi
In the computation, we may even write 
$L^q_tL_x^r (I\times \R^d)=
L^q_I L^r_x
$
as shorthand notations for the mixed Lebesgue space.

For $s\in\R$ and $r\geq 1$, we denote the homogeneous and inhomogeneous Sobolev norms as

\begin{equation*}
\|f\|_{\dot W^{s,r}(\R^d)}:=\||\nabla|^s f\|_{L^r(\R^d)},
\quad
\|f\|_{W^{s,r}(\R^d)}
:=\|\langle \nabla\rangle^s f\|_{L^r(\R^d)},   
\end{equation*}
and write 
$\dot H^{s}(\R^d)=\dot W^{s,2}(\R^d)$ and $ H^{s}(\R^d)= W^{s,2}(\R^d)$, repectively.
Here $\langle\nabla\rangle$ 
and $|\nabla|$
are defined by 
$\mathcal F{(\langle\nabla\rangle f)}(\xi)
=(1+|\xi|^2)^\frac12 \ft f(\xi)$
and 
$\mathcal F{(|\nabla| f)}(\xi)
=(1+|\xi|^2)^\frac12 \ft f(\xi)$,
respectively.

The energy space is defined as the product norm of the Sobolev space and is denoted as follows:
\begin{equation*}
\dot{\mathcal{H}}^1(\R^d):= \dot H^1(\R^d)\times L^2(\R^d),
\quad
\mathcal{H}^1(\R^d):= H^1(\R^d)\times L^2(\R^d).
\end{equation*}

Let $\varphi\in C_c^\infty(\R^d)$ supported on the ball $|\xi|\leq 2$ and equal to $1$ on the ball $|\xi|\leq 1$. For $N=2^k$, $k\in\Z$, we define the Littlewood-Paley projection operators by
\begin{align*}
&\ft{P_{\leq N}f}(\xi):=\varphi(\xi/N)\ft f(\xi),\\
&\ft{P_{N}f}(\xi):=(\varphi(\xi/N)-\varphi(2\xi/N))\ft f(\xi),\\
&\ft{P_{>N}f}(\xi):=(1-\varphi(\xi/N))\ft f(\xi).
\end{align*}




We recall the definition of the
wave admissible and the Strichartz estimate from
\cite{MZZ15} in the following.
See also
\cite{GV95, LS95, KT98}.
We say that $(q,r)$ is a $\dot H^s(\R^d)$-wave admissible pair if $2\leq q\leq\infty$, $2\leq r<\infty$, such that
\begin{equation*}
\frac{2}{q}\leq(d-1)\Big(\frac{1}{2}-\frac{1}{r}\Big),
\quad
\text{and}
\quad\,
\frac{1}{q}+\frac{d}{r}=\frac{d}{2}-s.
\end{equation*}
Then, we have the following Strichartz estimates.

\begin{lemma}
\label{Strichartz}
Let $d\geq 2$ and  $s>0$.
Let $(q,r)$ be a $\dot H^s(\R^d)$-wave admissible pair, and $(q_1,r_1)$ be a $\dot H^{1-s}(\R^d)$-wave admissible. If $u$ solves
\begin{equation*}
\begin{cases}
\pa_t^2 u-\Delta u+G=0\\
(u,\pa_t u)|_{t=0}=(u_0,u_1)\in\dot\H^s(\R^d)
\end{cases}
\end{equation*}
on $I\times\R^d$, where $I\subset \R$ is a time interval containing $0$.
Then, we have
\begin{equation}
\label{Strichartz estimate}
\|u\|_{L_t^qL_x^r (I\times \R^d)}+
\|(u,\pa_tu)\|_{L_t^\infty\dot\H^s (I\times \R^d) }\les
\|(u_0,u_1)\|_{\dot\H^s(\R^d)}+\|G\|_{L_t^{q_1'} L_x^{r_1'} (I\times \R^d)},
\end{equation}

\noi
where 
$q_1'$ and $r_1'$ are H\"older conjugates of $q_1$ and $r_1$, respectively.
\end{lemma}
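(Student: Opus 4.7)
The plan is to reduce \eqref{Strichartz estimate} to the classical Strichartz estimates for the half-wave propagators $e^{\pm it |\nabla|}$ and then handle the Duhamel term by a duality/Christ--Kiselev argument. By \eqref{S(t)} and Duhamel's principle, the solution admits the representation
\[
u(t) = \dt Q(t) u_0 + Q(t) u_1 - \int_0^t Q(t-t') G(t')\, \mathd t',
\]
so it suffices to estimate the homogeneous part and the Duhamel integral separately.

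For the homogeneous part, I would invoke the classical dispersive decay
\[
\|e^{\pm it|\nabla|} P_N f\|_{L^\infty_x} \les N^{\frac{d+1}{2}} |t|^{-\frac{d-1}{2}} \|P_N f\|_{L^1_x},
\]
obtained via stationary phase applied to the oscillatory integral representation of $e^{\pm it|\nabla|}$. Interpolating with the trivial $L^2$ bound and invoking the Keel--Tao abstract $TT^*$ machinery then yields $\|\dt Q(t) u_0 + Q(t) u_1\|_{L^q_t L^r_x} \les \|(u_0,u_1)\|_{\dot\H^s}$ for every $\dot H^s$-wave admissible pair $(q,r)$; the scaling identity $\tfrac{1}{q} + \tfrac{d}{r} = \tfrac{d}{2} - s$ is exactly what is needed for the Littlewood--Paley pieces to recombine after interpolation. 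The $L^\infty_t \dot\H^s$ portion of the bound is immediate from Plancherel, since $\dt Q(t) = \cos(t|\nabla|)$ and $|\nabla| Q(t) = \sin(t|\nabla|)$ are bounded on $L^2$ uniformly in $t$.

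For the Duhamel term, I would exploit duality. The hypothesis that $(q_1,r_1)$ is $\dot H^{1-s}$-wave admissible is precisely the right matching condition so that the adjoint of the map sending $(u_0,u_1) \in \dot\H^{1-s}$ to $\dt Q(t) u_0 + Q(t) u_1 \in L^{q_1}_t L^{r_1}_x$ gives the dual estimate
\[
\Big\| \int_{\R} Q(-t') G(t')\, \mathd t' \Big\|_{\dot H^{s-1}} \les \|G\|_{L^{q_1'}_t L^{r_1'}_x}.
\]
Composing this with the homogeneous bound already established for the pair $(q,r)$ produces the untruncated bilinear estimate, and the Christ--Kiselev lemma (applicable off the endpoint thanks to $q_1' < q$) converts the full-line convolution to the retarded integral $\int_0^t$. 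The corresponding $L^\infty_t \dot\H^s$ bound on the Duhamel piece is obtained identically by taking $(q,r) = (\infty, 2)$, which is always $\dot H^s$-admissible.

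The main obstacle in executing the program from scratch would be the endpoint case $(q,r) = \big(2, \tfrac{2(d-1)}{d-3}\big)$, where Christ--Kiselev fails and one must instead rely on the delicate bilinear interpolation of \cite{KT98}; outside this endpoint the argument is routine bookkeeping of admissibility exponents. Since the ingredients are all available in \cite{GV95, LS95, KT98, MZZ15}, the proof amounts to collecting and repackaging known results rather than developing new techniques.
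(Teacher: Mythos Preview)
Your sketch is correct and follows the standard route (dispersive decay $\to$ Keel--Tao $TT^*$ $\to$ duality and Christ--Kiselev for the inhomogeneous term), which is precisely the content of the references \cite{GV95, LS95, KT98, MZZ15} that the paper cites. The paper itself does not supply a proof of Lemma~\ref{Strichartz}; it simply recalls the statement from \cite{MZZ15} and points to those sources, so your proposal is in fact more detailed than what the paper provides and is fully consistent with it.
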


One can easily verify from 
wave admissible
definition that 
$(3,\frac{6d}{3d-8})$ is $\dot H^1$-wave admissible for $d\geq 5$. 
Thus, for simplicity, we shall denote the Strichartz space:
\begin{equation}
\label{X(I)}
X(I)=X(I\times\R^d):= 
L_t^3  L_x^{\frac{6d}{3d-8}} (I\times  \R^d),
\end{equation}
and we denote $X(T)=X([0,T])$ for $I=[0,T]$.
Similarly, for $3< q<\infty$  we denote 
\begin{align}
  X_1(I):=L^q_tL^\frac{6d}{3d-8}_x(I\times \mathbb R^d) \text{ and } X_1(T)=X_1([0,T]). \label{X_1}
\end{align}
For $d=5$, we denote that
\begin{equation*}
\wt X(I)=X(I)|_{d=5}:= L_t^3L_x^{\frac{30}{7}}(I\times\R^5),
\end{equation*}
and $\wt X(T)=\wt X([0,T])$.
Moreover,
we define the $Y(T)$-norm and $Z(T)$-norm by 
\begin{align}
\label{def_WY}
\begin{aligned}
\|f\|_{Y(T)}
&=
\|f\|_{L_t^{10}L_x^{\frac{10}{3}}([0,T]\times\R^5)}^{10}+\|\langle\nabla\rangle^{s-\delta} f\|_{L_t^\infty L_x^5([0,T]\times\R^5)},\\
\|f\|_{Z(T)}
&=
\|f\|^6_{L_t^6L_x^{\frac{30}{7}}([0,T]\times\R^5)}+\|f\|^{10}_{L_t^\infty L_x^{\frac{10}{3}}([0,T]\times\R^5)}+\|f\|^2_{L_t^\infty L_x^5([0,T]\times\R^5)}.
\end{aligned}
\end{align}

\noi
Here,  $\delta$ is a sufficiently small constant such that $\frac{1}{2}+\delta<s$.

Next, we recall the Hardy-Littlewood-Sobolev inequality from
\cite[Theorem 1.7]{BCD11},  which will be used repeatedly in later sections.

\begin{lemma}[Hardy-Littlewood-Sobolev]
\label{HLS}
Let $\gamma\in(0,d)$ and $1<p<q<\infty$ satisfy
\begin{equation*}
\frac{1}{p}+\frac{\gamma}{d}=1+\frac{1}{q}.
\end{equation*}

\noindent
Then, there exists a constant $C$ such that
\begin{equation}
\label{HLS-f}
\||\cdot|^{-\gamma}\ast f\|_{L^q(\R^d)}\leq C\|f\|_{L^p(\R^d)}.
\end{equation}
\end{lemma}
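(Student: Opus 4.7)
The plan is to establish a weak-type $(p,q)$ bound first and then upgrade it to the strong bound \eqref{HLS-f} via Marcinkiewicz interpolation. The key move is to split the Riesz kernel $K(x):=|x|^{-\gamma}$ at a radius $R>0$ to be optimized later: write $K=K_R+K^R$ with $K_R(x):=K(x)\mathbf{1}_{|x|\le R}$ and $K^R(x):=K(x)\mathbf{1}_{|x|>R}$, so that $|\cdot|^{-\gamma}\ast f=K_R\ast f+K^R\ast f$.

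For the tail piece, H\"older's inequality gives $\|K^R\ast f\|_{L^\infty}\le\|K^R\|_{L^{p'}}\|f\|_{L^p}$. Using the HLS scaling identity $\tfrac{1}{p}+\tfrac{\gamma}{d}=1+\tfrac{1}{q}$ in the form $\gamma/d=1/p'+1/q$, one verifies $\gamma p'>d$ and computes $\|K^R\|_{L^{p'}}\sim R^{d/p'-\gamma}=R^{-d/q}$. Thus $\|K^R\ast f\|_{L^\infty}\lesssim R^{-d/q}\|f\|_{L^p}$, and choosing $R=R(\lambda)$ so that this upper bound equals $\lambda/2$ forces $|K^R\ast f(x)|\le\lambda/2$ pointwise, and hence
\[
\bigl\{x:|K\ast f(x)|>\lambda\bigr\}\subseteq\bigl\{x:|K_R\ast f(x)|>\lambda/2\bigr\}.
\]
For the core piece, Young's inequality yields $\|K_R\ast f\|_{L^p}\le\|K_R\|_{L^1}\|f\|_{L^p}\lesssim R^{d-\gamma}\|f\|_{L^p}$ (finite since $\gamma<d$), so Chebyshev gives $|\{|K_R\ast f|>\lambda/2\}|\lesssim(R^{d-\gamma}\|f\|_{L^p}/\lambda)^p$. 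Substituting the optimized choice $R=(2\|f\|_{L^p}/\lambda)^{q/d}$ and invoking the scaling identity once more in the form $pq(d-\gamma)/d=q-p$, the exponents collapse to produce the desired weak-type estimate $|\{|K\ast f|>\lambda\}|\lesssim(\|f\|_{L^p}/\lambda)^q$.

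Finally, since $1<p<q<\infty$ with both strict, one can pick $p_0,p_1$ with $1<p_0<p<p_1<\infty$ and define $q_0,q_1$ via the scaling relation; since the induced map $p\mapsto q$ is strictly increasing, $q_0<q<q_1$, and Marcinkiewicz interpolation between the two weak endpoints yields the strong bound \eqref{HLS-f}. I do not expect the interpolation step to pose real difficulty; the main piece of bookkeeping is verifying the scaling identities $d/p'-\gamma=-d/q$ and $pq(d-\gamma)/d=q-p$ directly from $\tfrac{1}{p}+\tfrac{\gamma}{d}=1+\tfrac{1}{q}$, since it is precisely these identities that cause the weak-type exponents to collapse cleanly to $q$ across the whole admissible range of $(p,q)$.
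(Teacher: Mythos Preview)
Your proof is correct and follows the classical route (weak-type bound via splitting the Riesz kernel, then Marcinkiewicz interpolation). The scaling identities you isolate, $d/p'-\gamma=-d/q$ and $pq(d-\gamma)/d=q-p$, are both immediate rearrangements of the hypothesis $\tfrac{1}{p}+\tfrac{\gamma}{d}=1+\tfrac{1}{q}$, and the off-diagonal Marcinkiewicz theorem applies since $p<q$ throughout the admissible range.

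As for comparison with the paper: there is nothing to compare. The paper does not prove this lemma at all; it simply recalls the statement and cites \cite[Theorem~1.7]{BCD11} (Bahouri--Chemin--Danchin) as a reference. Your argument is in fact the standard textbook proof one finds in sources such as Stein or Grafakos, so you have supplied what the paper chose to outsource.
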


Finally, we recall the following useful lemma for the proof of Theorem \ref{thm-r-i}. 
The proof of~\eqref{Riesz} can be found in \cite[Theorem 5.9]{LL01}, which is a property of the Riesz potential. 
The proof of~\eqref{visan lem} closely follows that of \cite[Lemma 5.5]{V07}, and we omit the details.


\begin{lemma}\label{Riesz-Cha}
Let $d\geq 5$.
Then, we have
\begin{equation}
\label{Riesz}
\big\||\nabla|^{-\frac{d-4}{2}}(|u|^2)\big\|^2_{L^2(\R^d)}
\simeq \iint_{\R^d\times\R^d}\frac{|u(x)|^2|u(y)|^2}{|x-y|^4} dx dy ,
\end{equation}

and 
\begin{equation}
\label{visan lem}
\||\nabla|^{-\frac{d-4}{4}}f\|_{L^4(\R^d)}^2\lesssim\||\nabla|^{-\frac{d-4}{2}}|f|^2\|_{L^2(\R^d)}.
\end{equation}

\end{lemma}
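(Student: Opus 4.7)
The plan is to establish \eqref{Riesz} by a direct Plancherel identification of the Riesz kernel, and to establish \eqref{visan lem} by a Fourier-side paraproduct argument following Visan's Lemma 5.5 in \cite{V07}.

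For \eqref{Riesz}, since $d \ge 5$ we have $0 < d-4 < d$, so $|\nabla|^{-(d-4)}$ coincides on Schwartz functions with convolution against the Riesz kernel $c_d |x|^{-4}$ for an explicit positive dimensional constant $c_d>0$. Pairing this against $|u|^2$ and applying Plancherel yields
\[
\bigl\||\nabla|^{-\frac{d-4}{2}}(|u|^2)\bigr\|_{L^2(\R^d)}^2
= \bigl\langle |\nabla|^{-(d-4)}(|u|^2),\, |u|^2\bigr\rangle_{L^2(\R^d)}
= c_d \iint_{\R^d\times\R^d}\frac{|u(x)|^2|u(y)|^2}{|x-y|^4}\,dx\,dy,
\]
which is \eqref{Riesz}. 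This is essentially the content of Theorem 5.9 in Lieb--Loss.

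For \eqref{visan lem}, set $\alpha = (d-4)/4$. Squaring, the claim is equivalent to
\[
\bigl\||\nabla|^{-\alpha}f\bigr\|_{L^4(\R^d)}^4 \;\les\; \bigl\||\nabla|^{-2\alpha}|f|^2\bigr\|_{L^2(\R^d)}^2,
\]
and by Plancherel this becomes a bilinear Fourier-side inequality comparing $\int |\widehat{(|\nabla|^{-\alpha}f)^2}(\xi)|^2\,d\xi$ against $\int |\xi|^{-4\alpha} |\widehat{|f|^2}(\xi)|^2\,d\xi$. The key elementary input is the frequency-balance inequality
\[
|\eta|^{-\alpha}|\xi-\eta|^{-\alpha} \;\les\; |\xi|^{-\alpha}\bigl(|\eta|^{-\alpha} + |\xi-\eta|^{-\alpha}\bigr), \qquad \eta + (\xi-\eta) = \xi,
\]
a consequence of $\max(|\eta|, |\xi-\eta|) \ge |\xi|/2$. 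Substituting this bound into the Fourier-integral representation of $\widehat{(|\nabla|^{-\alpha}f)^2}$ and symmetrizing in $\eta \leftrightarrow \xi - \eta$, one strips off a factor of $|\xi|^{-\alpha}$ from the bilinear symbol and, after reassembly via Plancherel, recovers the target $|\xi|^{-2\alpha}\widehat{|f|^2}$ on the right-hand side.

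The step I expect to be the main obstacle is the accounting of the "high--low" interactions where $|\eta| \ll |\xi-\eta|$: there the raw bilinear weight $|\eta|^{-\alpha}|\xi-\eta|^{-\alpha}$ is strictly larger than the linear target weight $|\xi|^{-2\alpha} \sim |\xi-\eta|^{-2\alpha}$, so the necessary gain cannot come from a pointwise multiplier bound alone. Instead it must be extracted from the convolution structure of $\widehat{|f|^2} = \hat f * \hat f$, which is handled most cleanly by a Littlewood--Paley / paraproduct decomposition $|f|^2 = \sum_{N_1, N_2} P_{N_1}f \cdot \overline{P_{N_2}f}$ as in \cite[Lemma 5.5]{V07}. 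Once this bookkeeping is carried out, \eqref{visan lem} follows.
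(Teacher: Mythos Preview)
Your proposal is correct and follows essentially the same route as the paper: the paper itself gives no argument beyond citing \cite[Theorem 5.9]{LL01} for \eqref{Riesz} and \cite[Lemma 5.5]{V07} for \eqref{visan lem}, and you recover \eqref{Riesz} by the same Riesz-potential/Plancherel identification and correctly point to the Visan paraproduct argument for \eqref{visan lem}. Your Fourier-side heuristic (the frequency-balance inequality and the recognition that high--low interactions force the Littlewood--Paley decomposition) is a helpful elaboration of what the paper leaves implicit, but it is the same proof.
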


\subsection{On the stochastic convolution}\label{Subs-sto}

First we recall the definition of stochastic convolution $\Psi$ of \eqref{Psi}. Then,
we review the regularity properties of several stochastic objects, which can be found in 
\cite{OO20, OPW20,  OPW21, CL22}.

Recall the definition of the Hilbert-Schmidt operator. 
Given two separable Hilbert spaces $H$ and $K$, we denote by $\HS(H;K)$ 
the space of the Hilbert-Schmidt operator $\phi$ from $H$ to $K$. 
This space is endowed with the norm:
\begin{equation*}
\|\phi\|_{\HS(H;K)}=\Big(\sum_{k\in\mathbb{N}}\|\phi e_k\|_K^2 \Big)^{\frac{1}{2}},
\end{equation*}
where $\{e_k\}_{k\in\mathbb{N}}$ is an orthonormal basis of $H$.

Let $(\Omega,\mathcal{F},\mathbb{P})$ be a probability space endowed with a filtration $\{\mathcal{F}_t\}_{t\geq 0}$. Fix an orthonormal basis $\{e_k\}_{k\in\mathbb{N}}$ of $L^2(\R^d)$. Then, a cylindrical Wiener process $W$ on $L^2(\R^d)$ is defined by the following random Fourier series:
\begin{equation*}
W(t)=\sum_{k\in\mathbb{N}}\beta_k(t)e_k,
\end{equation*}

\noi
where $\{\beta_k\}_{k\in\mathbb{N}}$ is a family of mutually independent complex-valued Brownian motions so that $\beta_{-k}=\overline{\beta_k}$, $k\in\mathbb{N}.$ In particular, $\beta_0$ is a standard real-valued Brownian motion. In view of the cylindrical Wiener process $W$, we can express the stochastic convolution $\Psi$ in \eqref{Psi} as 
\begin{equation}
\label{Psi'}
\Psi(t)=\int_0^t 
Q(t-t') \phi dW(t')
=\sum_{k\in\mathbb{N}}\int_0^t
Q(t-t')   \phi e_k d\beta_k(t'),
\end{equation}

\noi
where $Q(\cdot)$ is defined in \eqref{S(t)}.
Next, we  state the regularity properties of the stochastic convolution, 
which shows that $\Psi$ is continuous in time and satisfies a so-called ``Strichartz estimate". 
One can find a detailed proof for part~\eqref{Psi-i} in \cite[Proposition 4.20]{DZ14}, and \cite[Remark 2.7]{BLL23} for part \eqref{Psi-ii}. 
The proof for part~\eqref{Psi-iii} is a simple adaptation of the proof of Lemma 2.6 in \cite{BLL23}.
The key ingredient is the {\it Wiener chaos} estimates;
see \cite[Lemma 2.4]{GKO18} and \cite[Proposition 2.4]{TT10} for the proof. 
We will omit the proof.

\begin{lemma}
\label{regularity of Psi}
Let $d\geq 1$, $T>0$ and $\phi\in \HS(L^2(\R^d);H^s(\R^d))$ for some $s\in\mathbb{R}$. Then, there exists some constant  $C=C(T,p)>0$  such that the following statements hold.
\begin{enumerate}[{\rm (i)}]
\item\label{Psi-i} $\Psi\in C([0,T];H^{s+1}(\R^d))$ almost surely. Moreover, for any $1\leq p<\infty$ we have
\begin{equation*}
\mathbb{E}\Big[\sup_{0\leq t\leq T}\|\Psi(t)\|_{H^{s+1}(\R^d)}^p\Big]\leq C \|\phi\|^p_{\HS(L^2;H^s)}.
\end{equation*}
\item\label{Psi-ii} $\pa_t\Psi\in C([0,T];H^s(\R^d))$ almost surely. Moreover, for any $1\leq p<\infty$ we have
\begin{equation*}
\mathbb{E}\Big[\sup_{0\leq t\leq T}\|\pa_t\Psi(t)\|_{H^{s}(\R^d)}^p\Big]\leq C
\|\phi\|^p_{\HS(L^2;H^s)}.
\end{equation*}

\item\label{Psi-iii} 
%
Let $1\leq q<\infty$ and  $d\geq 3$.
If $\sigma \geq \frac{11}{12}$,
then we have 
$\Psi\in L^q_tW^{s+1-\sigma,\frac{6d}{3d-8}}_x([0,T]\times \R^d)$ almost surely. 
Moreover,  under the same condition $\sigma \geq \frac{11}{12}$, 
for any $1\leq p<\infty$,
we have
\begin{equation*}
\mathbb{E}\Big[\|\Psi\|_{L^q_tW^{s+1-\sigma,\frac{6d}{3d-8}}_x([0,T]\times \R^d)}^p\Big]\leq C
\|\phi\|^p_{\HS(L^2;H^s)}.
\end{equation*}

\item\label{Psi-iv} 
Let $d=5$ and $\phi\in\HS(L^2(\R^5);H^1(\R^5))$.
Then, for any $1\leq p<\infty$ we have
\begin{equation*}
\mathbb{E}
\Big[\|\Psi\|^p_{\wt X(T)}+\|\Psi\|^p_{L_T^\infty L_x^5}+\|\pa_t\Psi\|^p_{L_T^{10}L_x^{\frac{10}{3}}}\Big]
\leq C
\|\phi\|^p_{\HS(L^2;H^1)}.
\end{equation*}

\noi
Moreover, there exist some constants $C,c>0$ such that
\begin{equation*}
\mathbb{P}(\{\|\Psi\|_{\wt X(T)}+\|\Psi\|_{L_T^\infty L_x^5}+\|\pa_t\Psi\|_{L_T^{10}L_x^{\frac{10}{3}}}>\lambda\})
\leq C \exp\Big(-\frac{c\lambda^2}{\|\phi\|^2_{\HS(L^2;H^1)}}\Big),
\end{equation*}
where $\wt X(T)=L_t^3 L_x^{\frac{30}{7}}([0,T]\times\R^5)$.
\end{enumerate}  
\end{lemma}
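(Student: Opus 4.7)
\noindent
The overall strategy combines the explicit stochastic-integral representation~\eqref{Psi'} with two standard tools: It\^o's isometry (to compute Gaussian second moments at fixed space-time points) and Wiener chaos / Gaussian hypercontractivity (to pass from $L^2_\omega$ to $L^p_\omega$ at the cost of a factor $p^{1/2}$, using that $\Psi$ lies in the first Wiener chaos). This setup also immediately yields the sub-Gaussian tail bound in~\eqref{Psi-iv} via Markov's inequality optimized in $p$.

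\medskip

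\noindent
For parts~\eqref{Psi-i} and~\eqref{Psi-ii}, I would apply It\^o's isometry to~\eqref{Psi'} to obtain
\begin{equation*}
\mathbb{E}\|\Psi(t)\|_{H^{s+1}}^2 = \sum_{k\in\mathbb{N}} \int_0^t \|Q(t-t')\phi e_k\|_{H^{s+1}}^2\, dt'.
\end{equation*}
Since the Fourier multiplier $\langle\xi\rangle\sin(t|\xi|)/|\xi|$ is bounded by $C(1+t)$, the right-hand side is controlled by $C(T)\|\phi\|^2_{\HS(L^2;H^s)}$. Higher moments follow from Wiener chaos, and almost-sure continuity in $t$ is obtained via Kolmogorov's continuity criterion applied to $\mathbb{E}\|\Psi(t)-\Psi(t')\|^p_{H^{s+1}}$, estimated in the same fashion. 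Part~\eqref{Psi-ii} is analogous, with $Q(t)$ replaced by $\partial_t Q(t) = \cos(t|\nabla|)$, whose symbol is $L^\infty$-bounded without any derivative gain, accounting for the regularity loss $s+1\to s$.

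\medskip

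\noindent
For part~\eqref{Psi-iii}, I would apply Minkowski's inequality (for $p$ sufficiently large relative to $q$ and $r=6d/(3d-8)$) to move the $L^p_\omega$ norm inside the space-time norm, then apply Wiener chaos pointwise to reduce to estimating $\bigl\|\|\Psi(t,x)\|_{L^2_\omega}\bigr\|_{L^q_t W^{s+1-\sigma,r}_x}$. By It\^o's isometry,
\begin{equation*}
\|\Psi(t,x)\|_{L^2_\omega}^2 = \sum_{k} \int_0^t \bigl|Q(t-t')\phi e_k(x)\bigr|^2\, dt',
\end{equation*}
and the remaining deterministic estimate is then closed by the wave Strichartz inequality of Lemma~\ref{Strichartz} applied at a suitable admissible pair; the threshold $\sigma \geq \tfrac{11}{12}$ emerges from the $\dot H^{\tilde{s}}$-wave-admissibility constraint on $(q, 6d/(3d-8))$ once the effective regularity $\tilde{s}=s+1-\sigma$ is inserted. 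I would follow the detailed calculation in~\cite[Lemma~2.6]{BLL23}. Part~\eqref{Psi-iv} then specializes to $d=5$: the $\wt X(T)$-norm is handled by~\eqref{Psi-iii} with $(q,r)=(3,30/7)$; the $L^\infty_t L^5_x$ component follows from~\eqref{Psi-i} together with the Sobolev embedding $H^2(\mathbb{R}^5)\hookrightarrow L^5(\mathbb{R}^5)$ (which is where $\phi\in\HS(L^2;H^1)$ is used); and the $L^{10}_t L^{10/3}_x$ norm of $\partial_t\Psi$ is handled analogously via~\eqref{Psi-ii} combined with a Strichartz bound.

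\medskip

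\noindent
The main obstacle I anticipate is pinning down the sharp regularity loss $\sigma\geq\tfrac{11}{12}$ in part~\eqref{Psi-iii}. The wave Strichartz estimate at the pair $(q,6d/(3d-8))$ is not endpoint-sharp at all regularities, so a genuine loss must be absorbed by the Sobolev weight, and identifying the exact value $\tfrac{11}{12}$ requires careful bookkeeping of the admissibility inequalities; I would adapt the argument of~\cite[Lemma~2.6]{BLL23} to the present $\R^d$ setting.
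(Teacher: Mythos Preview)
Your proposal is correct and aligns with the paper's approach: the paper omits the proof, citing \cite[Proposition~4.20]{DZ14} for~\eqref{Psi-i}, \cite[Remark~2.7]{BLL23} for~\eqref{Psi-ii}, and \cite[Lemma~2.6]{BLL23} together with Wiener chaos estimates for~\eqref{Psi-iii}. On your anticipated obstacle, the paper clarifies that the condition $\sigma \geq \tfrac{11}{12}$ is precisely what guarantees the existence of some $\tilde q \geq 2$ such that $(\tilde q, \tfrac{6d}{3d-8})$ is $\dot H^\sigma$-wave admissible---note that the relevant admissibility level is $\sigma$ itself (not $\tilde s = s+1-\sigma$ as you wrote), and the passage from this $\tilde q$ to an arbitrary finite $q$ is then handled by H\"older in time on $[0,T]$.
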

Note that the condition 
 $\sigma \geq \frac{11}{12}$ in Lemma \ref{regularity of Psi} (iii) ensures the existence of 
  $\tilde q\geq 2$
 such that 
$(\tilde q,\frac{6d}{3d-8})$ 
 forms an 
$\dot H^\sigma $-wave admissible pair for $d\geq 3$, 
thereby enabling the application of the Strichartz estimate in the proof.

\subsection{Wiener randomization and probabilistic estimates}
\label{SUB:WR}

In this subsection,
we briefly go over the randomization 
introduced in 
\cite{LM14, BOP15, BOP15'} for completeness. 
Let $\psi\in\mathcal{S}(\R^d)$ be a real-valued function such that ${\rm  supp}\,\psi\subset[-1,1]^d$, $\psi(-\xi)=\cj{\psi(\xi)}$ 
and
\begin{equation*}
\sum_{n\in\mathbb{Z}^d}\psi(\xi-n)\equiv 1\quad{\text {for\ all}}\quad \xi\in\R^d.
\end{equation*}

\noi
The function $u$ on $\R^d$ can be written as follows:
\begin{equation}
\label{undeco}
u=\sum_{n\in\Z^d}\psi(D-n)u,
\end{equation}
where $\psi(D-n)$ denotes the Fourier multiplier operator with symbol $\psi(\cdot-n)$. 


Let $\{g_{n,j}\}_{n\in\Z^d,j=0,1}$
be a sequence of mean zero complex-valued random variables\footnote{ Here, $g_{-n,j}=\overline{g_{n,j}}$ for all $n\in\Z^d$, $j=0,1$. 
In particular, $g_{0,j}$ is real-valued.}
on the probability space $(\Omega,\mathcal{F},\mathbb{P})$ and independent of the wiener process $W$.
Moreover, assume that
 $\{g_{0,j},\Re\, g_{n,j},\Im\, g_{n,j}\}_{n\in\mathcal{I},j=0,1}$
 are independent and endowed with probability distributions
  $\mu_{0,j}$, $\mu^{(1)}_{n,j}$ and $\mu^{(2)}_{n,j}$. 
Here the index set $\mathcal{I}$ is defined by
\begin{equation*}
\mathcal{I}:=\bigcup_{k=0}^{d-1}\Z^k\times\Z_+\times\{0\}^{d-k-1},
\end{equation*}
and is such that $\mathbb{Z}^d=\mathcal{I}\cup(-\mathcal{I})\cup\{0\}$.
Then,
given a pair of functions $(u_0$, $u_1)$ on $\R^d$, 
the {\it Wiener randomization} 
 $(u_0^\o$, $u_1^\o)$ of  $(u_0$, $u_1)$  
adapted to the uniform decomposition \eqref{undeco} and random variables $\{g_{n,j}\}_{n\in\Z^d,j=0,1}$
is defined by
\begin{equation*}
(u_0^\omega,u_1^\omega)=\Big(\sum_{n\in\Z^d}g_{n,0}(\omega)\psi(D-n)u_0,\sum_{n\in\Z^d}g_{n,1}(\omega)\psi(D-n)u_1\Big). 
\end{equation*}

\noi
We make the following assumption on the probability distributions  
$\mu_{0,j}$, $\mu^{(1)}_{n,j}$ and $\mu^{(2)}_{n,j}$:
there exists $c>0$ such that\footnote{
One can see that \eqref{dis} satisfies Gaussian random variables with the standard normal distributions 
$d\mu_k(x)=(2\pi)^{-\frac{1}{2}}e^{-\frac{x^2}{2}} dx$,
Bernoulli random variables with the distributions $ d\mu_k(x)=\frac{1}{2}(\delta_{-1}+\delta_1) dx$, 
and any random variables with compactly supported distributions.
}
\begin{equation}
\label{dis}
\int_{\R}e^{\gamma x} d\mu_{0,j}\leq e^{c\gamma^2}\ {\text{and}}\ \int_{\R}e^{\gamma x} d\mu^{(k)}_{n,j}\leq e^{c\gamma^2}
\end{equation}
for all $\gamma\in\R,$ $n\in\mathbb{Z}^d$, $j=0,1$, and $k=1,2$.

\begin{remark}
\label{ramdf}
\rm

We see from \cite[Lemma 2.2]{P17} implies that if $(u_0,u_1)\in \H^s(\R^d)$, 
then the randomization $(u^\omega_0,u^\omega_1)$ is almost surely in $\H^s(\R^d)$. 
Moreover, an adaptation of the argument in \cite[Lemma B.1]{BT08-1} gives that for any $s'>s$, if $(u_0,u_1)\in \H^s(\R^d)\backslash \H^{s'}(\R^d)$, then $(u^\omega_0,u^\omega_1)\notin \H^{s'}(\R^d)$ for almost every $\omega\in\Omega$, i.e., 
\[\mathbb{P}(\omega\in\Omega:(u^\omega_0,u^\omega_1)\notin \H^{s'}(\R^d))=1.\]

\noi
Namely, there is no smoothing based on randomization in terms of differentiability. 
The primary use of this randomization is to achieve better integrability
such that,
if $u_j\in L^2(\R^d),j=0,1$, then the randomized functions $u_j^\omega,j=0,1$ are almost surely in $L^p(\R^d)$ for any $p\geq 2$. 
A similar discussion can be found in \cite{P17}, and the proof is provided in \cite[Lemma 4]{BOP15}.

\end{remark}

We also recall the following probabilistic Strichartz estimates.
Define $\wt S(t)$ by
\begin{equation}
\label{t-S(t)}
\wt S(t)(u_0,u_1):=-\frac{|\nabla|}{\langle\nabla\rangle}\sin(t|\nabla|)u_0+\frac{\cos(t|\nabla|)}{\langle\nabla\rangle}u_1,
\end{equation}
which satisfies
\begin{equation}
\label{S(t) t-S(t)}
\pa_t S(t)(u_0,u_1)=\langle\nabla\rangle\wt S(t)(u_0,u_1).
\end{equation}

\begin{lemma}
\label{P-est-S*}
Let $d\geq 3$.
Given a pair  of real-valued functions 
$(u_0,u_1)$ defined on $\R^d$,
let $(u_0^\omega,u_1^\omega)$ be the randomization defined in \eqref{R (u0,u1)},
 satisfying \eqref{dis}.
 Let $T>0$ and $S^\ast(t)=S(t)$ or $\wt S(t)$
defined in \eqref{S(t)} and \eqref{t-S(t)}, respectively. Then, we have
\begin{enumerate}[{\rm (i)}]
\item\label{rand(i)} If $(u_0,u_1)\in\H^0(\R^d)$, then given $1\leq q<\infty$ and $2\leq r<\infty$, there exist $C,c>0$ such that
\begin{align}
\label{R est1}
\begin{aligned}
\mathbb{P}(\big\|S^\ast(t)(u^\omega_0,u^\omega_1) 
\big\|_{L_t^qL_x^r([0,T]\times\R^d)}>\lambda)
\leq C \exp  \Big(-\frac{c  \lambda^2}{\max\{1,T^2\}T^{\frac{2}{q}}\|(u_0,u_1)\|_{\H^0}^2} \Big).
\end{aligned}
\end{align}

\item\label{rand(ii)} For any $\e>0$, if $(u_0,u_1)\in\H^\e(\R^d)$, then given $2\leq r\leq \infty$, there exists $C, c>0$ such that
\begin{align}
\label{R est2}
\begin{aligned}
\mathbb{P}(\big\|S^\ast(t)(u^\omega_0,u^\omega_1)
\big\|_{L_t^\infty L_x^r([0,T]\times\R^d)}>\lambda)
\leq
C(1+T)\exp \Big(-\frac{c\lambda^2}{\max(1,T^2)\|(u_0,u_1)\|_{\H^\e}^2}\Big).
\end{aligned}
\end{align}

\end{enumerate}
\end{lemma}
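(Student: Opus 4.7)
The plan is to establish sub-Gaussian moment bounds for $S^\ast(t)(u_0^\omega, u_1^\omega)$ in the mixed Lebesgue norms and then convert them into tail bounds via Chebyshev's inequality and optimization in the moment exponent, following the classical framework of \cite{BT08-1, BOP15, P17} for probabilistic Strichartz estimates under Wiener randomization.

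For part (i), fix $p \geq \max(q, r, 2)$ and expand
\[
S^\ast(t)(u_0^\omega, u_1^\omega)(x) = \sum_{n \in \Z^d}\bigl(g_{n,0}(\omega) A_n(t, x) + g_{n,1}(\omega) B_n(t, x)\bigr),
\]
where $A_n(t, \cdot)$ and $B_n(t, \cdot)$ are the deterministic profiles obtained by letting $S^\ast(t)$ act on $(\psi(D - n) u_0, 0)$ and $(0, \psi(D - n) u_1)$, respectively. Since $\{g_{n,j}\}$ are independent and sub-Gaussian via \eqref{dis}, a standard Khintchine-type moment bound gives, pointwise in $(t, x)$,
\[
\bigl\|S^\ast(t)(u_0^\omega, u_1^\omega)(x)\bigr\|_{L^p_\omega} \leq C\sqrt{p}\,\Bigl(\sum_{n \in \Z^d}\bigl(|A_n(t, x)|^2 + |B_n(t, x)|^2\bigr)\Bigr)^{1/2}.
\]
Pulling $L^p_\omega$ inside $L^q_t L^r_x$ by Minkowski (valid as $p \geq q, r$) and then moving the $\ell^2$-sum outside $L^r_x$ (valid as $r \geq 2$), and using that each profile is Fourier-supported in a unit cube around $n$, Bernstein's inequality yields $\|A_n(t, \cdot)\|_{L^r_x} \leq C\|A_n(t, \cdot)\|_{L^2_x}$. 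The multipliers defining $S^\ast(t)$ act boundedly on such cubes, with the factor $|\nabla|^{-1}$ in $Q(t)$ absorbed by the $H^{-1}$ component of $\H^0$ at high frequencies but contributing an additional $\min(t, 1) \leq \max(1, T)$ at the zero-frequency cube. Applying Plancherel then collapses the $\ell^2$-sum to $\|(u_0, u_1)\|_{\H^0}$; integration in time yields a factor $T^{1/q}$ (via H\"older in $t$ when $q < 2$); and combining everything gives
\[
\|S^\ast(t)(u_0^\omega, u_1^\omega)\|_{L^p_\omega L^q_t L^r_x} \leq C\sqrt{p}\,\max(1, T)\,T^{1/q}\,\|(u_0, u_1)\|_{\H^0}.
\]
Chebyshev's inequality and optimization over $p$ then produces the Gaussian tail bound \eqref{R est1}.

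For part (ii), Minkowski fails at $q = \infty$ and, if $r = \infty$, also in $x$. To reduce to the setting of part (i), the plan is to invoke the one-dimensional Sobolev embedding $W^{\delta, q}_t([0, T]) \hookrightarrow L^\infty_t([0, T])$ for $\delta q > 1$ and $q$ large, combined with the spatial embedding $H^{d(1/2 - 1/r) + \delta'}(\R^d) \hookrightarrow L^r(\R^d)$ (with the variant $H^{d/2 + \delta'} \hookrightarrow L^\infty$ when $r = \infty$). Differentiating $S^\ast(t)$ in time effectively multiplies the Fourier symbol by $\langle\nabla\rangle$, so the $W^{\delta, q}_t$ norm corresponds to applying $\langle\nabla\rangle^\delta$ to the data, while the spatial Sobolev embedding costs a further $\langle\nabla\rangle^{\delta'}$; both losses are absorbed by the $\H^\e$ assumption once $\e > \delta + \delta'$. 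Repeating the argument of part (i) applied to $\langle\nabla\rangle^{\delta + \delta'} S^\ast(t)(u_0^\omega, u_1^\omega)$ produces the variance factor $\max(1, T^2)\,\|(u_0, u_1)\|_{\H^\e}^2$, while the $(1 + T)$ prefactor arises from the Sobolev embedding constant on $[0, T]$. The main obstacle is precisely this $L^\infty_t$ endpoint, whose resolution via Sobolev embedding in time forces the $\e$-regularity loss noted in the hypothesis; once this is in place, the moment-to-tail conversion via Chebyshev and optimization is routine.
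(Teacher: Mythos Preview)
The paper does not give its own proof but defers to \cite[Proposition~2.3]{P17} for part~(i) and \cite[Proposition~3.3]{OP16} for part~(ii); your outline for part~(i) reproduces that argument exactly (Khintchine for sub-Gaussian sums, Minkowski with $p\ge q,r$, unit-scale Bernstein, then Chebyshev with optimisation in $p$) and is correct.

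For part~(ii) your overall strategy is the right one, but the implementation as written has a gap. You invoke the embedding $H^{d(1/2-1/r)+\delta'}(\R^d)\hookrightarrow L^r(\R^d)$ and then claim it ``costs a further $\langle\nabla\rangle^{\delta'}$''; in fact that embedding costs $d(1/2-1/r)+\delta'$ derivatives, which at $r=\infty$ is $d/2+\delta'\gg\e$. The repair is that for finite $r$ no spatial embedding is needed at all (unit-cube Bernstein already trades $L^r_x$ for $L^2_x$ with no loss), while for $r=\infty$ one should use $W^{\delta',\tilde r}_x\hookrightarrow L^\infty_x$ with $\tilde r$ taken large enough that $\delta'\tilde r>d$ yet $\delta'<\e$, and then run the part~(i) machinery at the finite exponent~$\tilde r$. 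Likewise, the assertion that the $W^{\delta,q}_t$-norm ``corresponds to applying $\langle\nabla\rangle^\delta$ to the data'' is only heuristic: one full time derivative of $S^\ast(t)$ indeed produces $\langle\nabla\rangle$, but fractional time regularity does not transfer so directly on a bounded interval. The cleanest route (and the one in \cite{OP16}) is to partition $[0,T]$ into $O(1+T)$ unit intervals, which accounts for the prefactor $(1+T)$ via a union bound, and then on each interval combine an integer-order Sobolev embedding in time with the observation that on the cube near $n$ one may factor $e^{it|\nabla|}=e^{it|n|}e^{it(|\nabla|-|n|)}$ with $||\xi|-|n||\lesssim 1$, so that the time derivative of the second factor is bounded uniformly in~$n$ and no spatial derivative is lost at this step.
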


The proof of \eqref{R est1} can be found in \cite[Proposition 2.3]{P17},
and a straightforward adaptation also gives
the proof of $\wt S(t)$.
As for part \eqref{R est2}, one can refer to \cite[Proposition 3.3]{OP16} and the references therein, 
where one can be readily extended to the cases of $d\geq4$.

\begin{remark}
\rm

The probabilistic Strichartz estimates in Lemma \ref{P-est-S*} are associated with the finite time interval $[0,T]$
indicating that obtaining a global-in-time Strichartz estimate directly is not possible. 
Recently, Dodson-Luhrmann-Mendelson \cite{DLM20} achieved global well-posedness 
and scattering for the $4$-dimensional energy-critical stochastic NLW
by using a radial Sobolev estimate and a global-in-time probabilistic Strichartz estimate. 
Due to the limitation of the local-in-time probabilistic Strichartz estimate in our paper, 
scattering results are not presented. 
Establishing a global-in-time probabilistic Strichartz estimate
for $d=5$ 
and obtaining almost sure scattering results for (deterministic) HNLW is a topic we hope to address in the future.

\end{remark}

\section{Local well-posedness and stability theories}
\label{Sec:Local}
In this section, we consider the
following 
defocusing energy-critical HNLW equation with a perturbation:
\begin{equation}
\label{local v}
\begin{cases}
\pa_t^2 v-\Delta v+\mathcal{N}(v+f)=0\\
(v,\pa_t v)|_{t=t_0}=(v_0,v_1)
\end{cases}
\end{equation}

\noindent
with $\mathcal{N}(v)=(|\cdot|^{-4}\ast v^2)v$.
Here, $f$ is a given deterministic function satisfying certain regularity conditions.

\subsection{Local well-posedness of the perturbed  HNLW}

By employing of the contraction mapping theorem in conjunction with H\"{o}lder's inequality, the Hardy-Littlewood-Sobolev inequality, and the triangle inequality, we establish the following local well-posedness of equation \eqref{local v}. 

\begin{proposition}\label{local}
Let $d\geq 5$ and $(v_0,v_1)\in\dot\H^1(\R^d)$. There exists $0<\eta\ll 1$ such that  if
\begin{equation}
\label{local assumption1}
\|S(t-t_0)(v_0,v_1)\|_{X(I)}\leq\eta\quad\text{and}\quad\|f\|_{X(I)}\leq\eta
\end{equation}

\noi
for some interval $I=[t_0,t_1]\subset\R$, then the Cauchy problem \eqref{local v} admits a unique solution 
$(v,\pa_t v)\in C(I;\dot\H^1(\R^d))$ satisfies
\begin{equation*}
\|v\|_{X(I)}\leq 3\eta.
\end{equation*}

\noi
In particular, the uniqueness of $v$ holds in the ball
\begin{equation}
\label{ball}
B_{\eta}(I)
:=
\{v\in X(I):\|v\|_{X(I)}\leq3\eta\}.  
\end{equation}

\end{proposition}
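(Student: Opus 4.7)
The plan is to set up a standard contraction mapping argument on the ball $B_\eta(I)$ defined in \eqref{ball}, for the Duhamel map
\begin{equation*}
\Gamma(v)(t) := S(t-t_0)(v_0,v_1) - \int_{t_0}^{t} Q(t-t')\,\mathcal{N}(v+f)(t')\,dt'.
\end{equation*}
Observe that $(q,r)=\bigl(3,\tfrac{6d}{3d-8}\bigr)$ is $\dot H^{1}(\R^d)$-wave admissible for $d\ge 5$, while $(q_1,r_1)=(\infty,2)$ is trivially $\dot H^{0}=\dot H^{1-s}$-wave admissible with $s=1$. Hence Lemma~\ref{Strichartz} applied to $\Gamma(v)$ yields
\begin{equation*}
\|\Gamma(v)\|_{X(I)} + \|(\Gamma(v),\pa_t\Gamma(v))\|_{L^\infty_t \dot\H^1(I\times\R^d)} \lesssim \|(v_0,v_1)\|_{\dot\H^1(\R^d)} + \|\mathcal{N}(v+f)\|_{L^1_t L^2_x(I\times\R^d)}.
\end{equation*}
This reduces the problem to a trilinear bound on the Hartree nonlinearity in $L^1_t L^2_x$.

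For the nonlinear estimate, I would use H\"older in $x$ together with the Hardy--Littlewood--Sobolev inequality (Lemma~\ref{HLS}) with $\gamma=4$. Writing $p=\tfrac{6d}{3d-8}$, one checks that $u^2\in L^{p/2}_x$ implies $|\cdot|^{-4}\ast u^2 \in L^{3d/4}_x$, and then H\"older gives $\mathcal{N}(u)\in L^2_x$ with the bound
\begin{equation*}
\|\mathcal{N}(u)\|_{L^2_x(\R^d)} \lesssim \bigl\||\cdot|^{-4}\ast u^2 \bigr\|_{L^{3d/4}_x}\|u\|_{L^p_x} \lesssim \|u\|_{L^p_x(\R^d)}^3.
\end{equation*}
Integrating in time with $L^3_t\cdot L^3_t\cdot L^3_t \hookrightarrow L^1_t$ yields $\|\mathcal{N}(u)\|_{L^1_t L^2_x(I\times\R^d)} \lesssim \|u\|_{X(I)}^3$. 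Combined with the triangle inequality and the assumption \eqref{local assumption1},
\begin{equation*}
\|\Gamma(v)\|_{X(I)} \le \|S(t-t_0)(v_0,v_1)\|_{X(I)} + C\bigl(\|v\|_{X(I)}+\|f\|_{X(I)}\bigr)^3 \le \eta + C(4\eta)^3,
\end{equation*}
which is bounded by $3\eta$ for $\eta$ sufficiently small, so $\Gamma$ maps $B_\eta(I)$ into itself.

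For contraction, I would use the algebraic identity
\begin{equation*}
\mathcal{N}(a)-\mathcal{N}(b) = \bigl(|\cdot|^{-4}\ast a^2\bigr)(a-b) + \bigl(|\cdot|^{-4}\ast\bigl((a-b)(a+b)\bigr)\bigr)b,
\end{equation*}
with $a=v+f$ and $b=w+f$, and apply the same H\"older/HLS/Strichartz chain to obtain
\begin{equation*}
\|\Gamma(v)-\Gamma(w)\|_{X(I)} \le C\bigl(\|v\|_{X(I)}^{2}+\|w\|_{X(I)}^{2}+\|f\|_{X(I)}^{2}\bigr)\|v-w\|_{X(I)} \le C\eta^{2}\|v-w\|_{X(I)},
\end{equation*}
which is a strict contraction for $\eta\ll 1$. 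The Banach fixed point theorem then yields a unique $v\in B_\eta(I)$, and the Strichartz bound displayed above simultaneously gives $(v,\pa_t v)\in C(I;\dot\H^1(\R^d))$. There is no real obstacle here beyond checking exponents carefully; the estimate collapses to $L^2_x$ precisely because $\gamma=4$ matches the energy-critical scaling, which is what made the pair $(3,\tfrac{6d}{3d-8})$ the natural choice for $X(I)$ in the first place.
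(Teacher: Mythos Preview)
Your proposal is correct and follows essentially the same approach as the paper: a contraction mapping argument on $B_\eta(I)$ using the Strichartz estimate (Lemma~\ref{Strichartz}) with the dual pair $(q_1',r_1')=(1,2)$, together with H\"older and Hardy--Littlewood--Sobolev to obtain the trilinear bound $\|\mathcal{N}(u)\|_{L^1_t L^2_x}\lesssim \|u\|_{X(I)}^3$. The only cosmetic differences are that the paper expands the difference $\mathcal{N}(a)-\mathcal{N}(b)$ into three terms (further splitting your $(a-b)(a+b)$ factor) and tracks the constants as $\eta + C_3(3\eta)^3 + C_3\eta^3$ rather than $\eta + C(4\eta)^3$.
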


\begin{proof}
We define the solution map $\Phi$ of \eqref{local v} by
\begin{equation*}
\Phi(v):=
S(t-t_0)(v_0,v_1)-\int_{t_0}^t  Q(t-t')\mathcal{N}(v+f)(t')dt'.
\end{equation*}
Then, we show $\Phi$ is a contraction on the ball $B_\eta(I)$, and $\eta>0$ is to be chosen later.

By Lemma \ref{Strichartz}, we have
\begin{equation*}
\|\Phi(v)\|_{X(I)}\leq
\|S(t-t_0)(v_0,v_1)\|_{X(I)}+C_1\|\mathcal{N}(v+f)\|_{L_I^1L_x^2}
\end{equation*}
for some constant $C_1>0$.
Then, we apply H\"{o}lder's inequality and the Hardy-Littlewood-Sobolev inequality \eqref{HLS-f} to obtain that
\begin{align}
\begin{aligned}
\|\mathcal{N}(v+f)\|_{L_I^1L_x^2}
 &=\|(|\cdot|^{-4}\ast|v+f|^2)(v+f)\|_{L_I^1L_x^2}\\
 &\leq
 \||\cdot|^{-4}\ast|v+f|^2\|_{L_I^{3/2}L_x^{3d/4}}\|v+f\|_{X(I)}\\
&\leq
C_2\||v+f|^2\|_{L_I^{3/2}L_x^{3d/(3d-8)}}\|v+f\|_{X(I)}
\leq
C_2\|v+f\|_{X(I)}^3
\end{aligned}
\label{local estimate1}
\end{align}

\noi
for some constant $C_2>0$.
By the triangle inequality, assumption \eqref{local assumption1}, definition \eqref{ball}, and estimate \eqref{local estimate1}, we deduce
\begin{equation*}
\begin{split}
\|\Phi(v)\|_{X(I)}
\leq\eta+C_3\|v\|^3_{X(I)}+C_3\|f\|^3_{X(I)}
\leq\eta+C_3(3\eta)^3+C_3\eta^3
\leq 3\eta,
\end{split}
\end{equation*}

\noi
where $C_3>0$ is a constant and $0<\eta\ll 1$ such that $C_3(3\eta)^3\leq\eta$. From this, we prove that $\Phi$ maps $B_{\eta}(I)$ into itself. Repeating the above steps, for $v_1,v_2\in B_{\eta}(I)$, we have
\begin{equation}\label{local est1}
\begin{split}
\|\Phi(v_1)-\Phi(v_2)\|_{X(I)}
 &\leq C_1\|\mathcal{N}(v_1+f)-\mathcal{N}(v_2+f)\|_{L_I^1L_x^2}\\
&\leq C_1\|(|\cdot|^{-4}\ast|v_1+f|^2)(v_1-v_2)\|_{L_I^1L_x^2}\\
&\quad +C_1\|(|\cdot|^{-4}\ast|v_1-v_2|^2)(v_2+f)\|_{L_I^1L_x^2}\\
&\quad +2C_1\|(|\cdot|^{-4}\ast(v_2+f)(v_1-v_2))(v_2+f)\|_{L_I^1L_x^2}\\
&\eqqcolon \1 + \II + \III,
\end{split}
\end{equation}

\noindent where we used the fact that
 \begin{equation*}
 \begin{split}
 \mathcal{N}(v_1+f)-\mathcal{N}(v_2+f)
& =(|\cdot|^{-4}\ast|v_1+f|^2)(v_1-v_2)+(|\cdot|^{-4}\ast|v_1-v_2|^2)(v_2+f)\\
 &\quad +2(|\cdot|^{-4}\ast(v_2+f)(v_1-v_2))(v_2+f).
 \end{split}
 \end{equation*}

\noi
For the first term $\1$, by using H\"{o}lder's, Hardy-Littlewood-Sobolev, and  triangle inequalities,
we yield
\begin{equation}\label{local est2}
\begin{split}
\1
\lesssim
\|v_1-v_2\|_{X(I)}\|v_1+f\|_{X(I)}^2
\lesssim
\eta^2\|v_1-v_2\|_{X(I)}.
\end{split}
\end{equation}

\noi
Similarly, we have
\begin{align}\label{local est3'}
    \II\lesssim\eta^2\|v_1-v_2\|_{X(I)},
    \quad
    \III\lesssim\eta^2\|v_1-v_2\|_{X(I)}.
\end{align}

%
%
%
%
%

\noi
Therefore, from
estimates \eqref{local est1}, \eqref{local est2},
and \eqref{local est3'}, we have
\begin{equation*}
\|\Phi(v_1)-\Phi(v_2)\|_{X(I)}\leq C\eta^2\|v_1-v_2\|_{X(I)}
\end{equation*}
for some constant $C>0$. By choosing $\eta$ such that
$C\eta^2\leq \frac{1}{2}$,
we can conclude that $\Phi$ is a contraction on $B_{\eta}(I)$.

Finally, we show that $(v,\pa_t v)\in L_I^\infty\dot\H^1$. An application of the Strichartz estimate, H\"{o}lder's inequality and the Hardy-Littlewood-Sobolev inequality as well as the following Duhamel formula
\begin{equation*}
v(t)=S(t-t_0)(v_0,v_1)-\int_{t_0}^t Q(t-t')\mathcal{N}(v+f)(t') dt' 
\end{equation*}

\noi
yields that
\begin{equation*}
\|(v,\pa_t v)\|_{L_I^\infty\dot\H^1}\leq 
\|(v_0,v_1)\|_{\dot\H^1}+3\eta.
\end{equation*}
This completes the proof of this proposition.

\end{proof}

The following blow-up criterion follows as a direct consequence of Proposition \ref{local}.

\begin{proposition}
\label{blow-up}
Let $d\geq 5$ and 
$(v_0,v_1)\in\dot\H^1(\R^d)$.
Let  $f$ be a function on $[0,T]\times \R^d$
satisfying $\|f\|_{X(T)}<\infty$ for any $T>0$. 
If $v$ is a solution to the equation \eqref{local v},
with maximal time existence interval $[0,T^*]$.
Then,  either 
\[
T^*=\infty
\quad \text{ or } 
\quad
\lim_{T\to T^*} \|v\|_{X(T)}=\infty,
\]

\noi
where 
$T^*=T^*(u_0, u_1, f)>0$ denotes  the forward maximal time of existence.
\end{proposition}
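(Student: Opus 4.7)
The plan is to argue by contradiction. Assume $T^{*}<\infty$ and that $M:=\lim_{T\uparrow T^{*}}\|v\|_{X(T)}<\infty$; combined with $\|f\|_{X(T^{*})}<\infty$, this gives finite Strichartz norms for both $v$ and $f$ on $[0,T^{*})$. I will show that $(v(t),\partial_{t}v(t))$ admits a limit in $\dot{\H}^{1}$ as $t\uparrow T^{*}$, and then apply Proposition~\ref{local} at time $T^{*}$ to extend the solution strictly past $T^{*}$, contradicting maximality.

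The first step is a uniform energy bound on $[0,T^{*})$. I would partition $[0,T^{*}]$ into finitely many subintervals $I_{j}=[t_{j-1},t_{j}]$, $j=1,\dots,N$, chosen so that $\|v\|_{X(I_{j})}+\|f\|_{X(I_{j})}\leq \eta$ on each piece, with $\eta$ the smallness constant from Proposition~\ref{local}. Applying Lemma~\ref{Strichartz} to the Duhamel identity on $I_{j}$, together with the H\"older/Hardy--Littlewood--Sobolev estimate used in \eqref{local estimate1}, yields
\begin{equation*}
\|(v,\partial_{t}v)\|_{L^{\infty}_{I_{j}}\dot{\H}^{1}}\leq \|(v(t_{j-1}),\partial_{t}v(t_{j-1}))\|_{\dot{\H}^{1}}+C\eta^{3}.
\end{equation*}
Iterating across the $N$ subintervals produces the uniform bound $\sup_{0\leq t<T^{*}}\|(v(t),\partial_{t}v(t))\|_{\dot{\H}^{1}}<\infty$.

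Next I would upgrade this to a Cauchy statement. Given $\varepsilon>0$, use the absolute continuity of the integral defining $\|\cdot\|_{X}^{3}=\int\|\cdot\|_{L^{6d/(3d-8)}_{x}}^{3}\,dt$ to pick $t_{0}<T^{*}$ with $\|v+f\|_{X([t_{0},T^{*}])}^{3}<\varepsilon$; Strichartz applied to the Duhamel remainder $D(t):=(v(t),\partial_{t}v(t))-\vec S(t-t_{0})(v(t_{0}),\partial_{t}v(t_{0}))$, together with \eqref{local estimate1}, then gives $\|D(t)\|_{\dot{\H}^{1}}\leq C\varepsilon$ for all $t\in[t_{0},T^{*})$. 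For $t_{0}\leq t<t'<T^{*}$, setting $\vec v=(v,\partial_{t}v)$ and denoting by $\vec S$, $\vec Q$ the vector analogues of $S$, $Q$, the identity
\begin{equation*}
\vec v(t')-\vec v(t)=\bigl[\vec S(t'-t_{0})-\vec S(t-t_{0})\bigr]\vec v(t_{0})+\bigl[\vec S(t'-t)-I\bigr]D(t)-\int_{t}^{t'}\vec Q(t'-\tau)\mathcal{N}(v+f)\,d\tau
\end{equation*}
isolates three contributions: the first tends to $0$ as $t,t'\to T^{*}$ by strong continuity of the linear wave flow on $\dot{\H}^{1}$ applied to the fixed element $\vec v(t_{0})$; the second is bounded by $2\|D(t)\|_{\dot{\H}^{1}}<2C\varepsilon$; the third is $\lesssim \|v+f\|_{X([t,t'])}^{3}<\varepsilon$. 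Hence $\vec v(T^{*}):=\lim_{t\uparrow T^{*}}\vec v(t)$ exists in $\dot{\H}^{1}$.

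To close the argument, I would choose $\delta>0$ small enough that $\|\vec S(\cdot-T^{*})\vec v(T^{*})\|_{X([T^{*},T^{*}+\delta])}\leq \eta$ (by Strichartz and monotone convergence) and $\|f\|_{X([T^{*},T^{*}+\delta])}\leq \eta$ (by absolute continuity again). Proposition~\ref{local} then supplies a solution on $[T^{*},T^{*}+\delta]$ with data $\vec v(T^{*})$ at $t=T^{*}$, which concatenates with $v|_{[0,T^{*}]}$ to produce a solution on $[0,T^{*}+\delta]$, contradicting the maximality of $T^{*}$. The one step requiring genuine care is the Cauchy argument: because the base point $\vec v(t)$ is itself varying as $t\to T^{*}$, a naive invocation of strong continuity fails, and one must split off the Duhamel remainder $D(t)$ and exploit its smallness in energy norm inherited from the shrinking $X$-norm near $T^{*}$; the remaining ingredients are direct reapplications of Proposition~\ref{local} and Lemma~\ref{Strichartz}.
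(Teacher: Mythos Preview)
Your argument is correct. The paper itself gives no detailed proof of this proposition, stating only that it ``follows as a direct consequence of Proposition~\ref{local}''; your write-up supplies exactly the kind of standard argument that is being taken for granted.

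One remark on efficiency: the Cauchy argument you carry out to define $\vec v(T^{*})$ is correct but not strictly necessary. A shorter route is to apply Proposition~\ref{local} at a time $t_{0}<T^{*}$ rather than at $T^{*}$ itself. From the Duhamel identity on $[t_{0},T^{*})$ and the Strichartz estimate one gets
\[
\|S(\cdot-t_{0})(v(t_{0}),\partial_{t}v(t_{0}))\|_{X([t_{0},T^{*}))}
\leq \|v\|_{X([t_{0},T^{*}))}+C\|v+f\|_{X([t_{0},T^{*}))}^{3},
\]
which can be made $\leq \eta/2$ by taking $t_{0}$ close to $T^{*}$. Since the full linear evolution lies in $X([t_{0},\infty))$ by Strichartz and the uniform $\dot\H^{1}$ bound you already established, absolute continuity furnishes $\delta>0$ with $\|S(\cdot-t_{0})(v(t_{0}),\partial_{t}v(t_{0}))\|_{X([t_{0},T^{*}+\delta])}\leq\eta$ and $\|f\|_{X([t_{0},T^{*}+\delta])}\leq\eta$. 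Proposition~\ref{local} then yields a solution on $[t_{0},T^{*}+\delta]$ which, by uniqueness in $B_{\eta}$, agrees with $v$ on $[t_{0},T^{*})$ and hence extends it. This avoids passing to the limit at $T^{*}$ altogether, and is presumably what the paper has in mind by ``direct consequence''. Either way, your proof stands.
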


\begin{remark}
\label{RM:local}
\rm 

 We note that the local well-posedness of equations \eqref{perturbed v} and \eqref{eq_SHW3}
 follows from the regularity properties of the stochastic convolution $\Psi$ and $\wt \Psi$, as established in Proposition~\ref{local}.

 (i) Let $\phi\in \HS(L^2(\R^d);L^2(\R^d))$.
By replacing $f=\Psi$ 
in  Proposition~\ref{local},
 and using Lemma~\ref{regularity of Psi}, we conclude that equation \eqref{perturbed v}
is almost surely locally well-posed in the energy space $\dot \H^1(\R^d)$.

(ii)
Let $d= 5$ and $s \in [0,1)$.
Assume 
$(u_0,u_1)\in \mathcal{\dot  H}^s(\R^5)$
and $\phi\in\HS(L^2(\R^5);L^2(\R^5))$. Then by
Lemmas 
 \ref{regularity of Psi} 
 and \ref{P-est-S*} 
together with the Sobolev embedding, we obtain 
that 
\begin{align}
\label{regpsi2}
\wt\Psi\in  X(T),
\end{align}
where 
$T=T_\omega$ is almost surely positive. Here 
$\wt \Psi $  is defined in \eqref{Psi2} and $ X(T)$ is defined  in \eqref{X(I)}, respectively.
Hence,  Proposition \ref{local} also implies 
 local well-posedness of
\eqref{eq_SHW3} by replacing $f=\wt \Psi$ and $(v_0,v_1)=(0,0)$,
consequently establishing Theorem \ref{thm-r-i} (i).
See \cite[Proposition 4.3]{P17} and \cite[Lemma 5.1]{OP16}.

\rm
\end{remark}

\subsection{Stability results}

In this subsection, we will state 
 the perturbation lemma, 
 which will be crucial in proving global existence for \eqref{local v}.
We consider
\begin{equation}
\label{HNLW3}
\begin{cases}
\pa_t^2 u-\Delta u+\mathcal{N}(u)=0\\
(u,\pa_t u)|_{t=t_0}=(u_0,u_1)
\end{cases}
\end{equation}

\noi
with $\mathcal{N}(u)=(|\cdot|^{-4}\ast|u|^2)u$.

\begin{lemma}
\label{perturbation}
Let $d\geq 5$,
 $t_0\in  I\subset\R$ be a compact time interval, and $M>0$. 
 Let $v$ be a function on $I\times\R^d$,
 which solves \eqref{HNLW3} with a  perturbation  term $e$,
namely,
\begin{equation*}
\pa_t^2 v-\Delta v+\mathcal{N}(v)=e,
\end{equation*}

\noi
with initial data $(v,\pa_t v)|_{t=t_0}=(v_0,v_1)\in\dot\H^1(\R^d)$, satisfying
\begin{equation}
\label{p-condition1}
\|v\|_{X(I)}\leq M.
\end{equation}

\noi
Let $w$ be the solution to \eqref{HNLW3} with initial data 
 $(w,\pa_t w)|_{t=t_0}=(w_0,w_1)\in\dot\H^1(\R^d)$.
Then, there exists $\wt\e=\wt\e(M)>0$ sufficiently small 
such that if 
\begin{align}
\label{p-condition2}
\begin{aligned}
\|(v_0-w_0,v_1-w_1)\|_{\dot\H^1}&\leq\e,\\
\|e\|_{L_I^1L_x^2}&\leq\e
\end{aligned}
\end{align}

\noi
for some
$0<\e<\wt\e$,
then the following holds:
\begin{equation*}
\sup_{t\in I}\|(v(t)-w(t),\pa_t v(t)-\pa_t w(t))\|_{\dot\H^1_x}+\|v-w\|_{L_I^q L_x^r}\leq C(M)\e
\end{equation*}
for all $\dot H^1$-wave admissible pairs $(q,r)$,
and  $C(\cdot)$ is a non-decreasing function.
\end{lemma}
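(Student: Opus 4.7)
The plan is to follow the standard energy-critical perturbation argument (as used for power-type NLW in \cite{TVZ07, KM08} and adapted to the Hartree setting in \cite{MZZ14}), with the Hardy--Littlewood--Sobolev inequality replacing the pointwise product estimates used for power nonlinearities. Write $z = v - w$, so that $z$ satisfies
\[
\pa_t^2 z - \Delta z + \mathcal{N}(v) - \mathcal{N}(v - z) = e,\qquad (z,\pa_t z)|_{t=t_0}=(v_0-w_0, v_1-w_1),
\]
and expand
\[
\mathcal{N}(v) - \mathcal{N}(v-z) = (|\cdot|^{-4}\ast z^2)(v-z) + 2(|\cdot|^{-4}\ast v z)(v-z) + (|\cdot|^{-4}\ast v^2)z.
\]

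First I would use \eqref{p-condition1} to partition $I = \bigcup_{j=1}^{J} I_j$ into $J = J(M,\eta)$ consecutive subintervals $I_j = [t_j,t_{j+1}]$ with $\|v\|_{X(I_j)} \leq \eta$, where $\eta = \eta(M)$ is a small constant to be chosen at the end. On a single subinterval $I_j$, let $\Phi_j(t) = S(t-t_j)(z(t_j),\pa_t z(t_j))$. By Lemma \ref{Strichartz} applied to the equation for $z$, together with H\"older's inequality and Lemma \ref{HLS} exactly as in \eqref{local estimate1}, I get
\[
\|z\|_{X(I_j)} \les \|\Phi_j\|_{X(I_j)} + \bigl(\|v\|_{X(I_j)}^2 + \|z\|_{X(I_j)}^2\bigr)\|z\|_{X(I_j)} + \|e\|_{L^1_{I_j} L^2_x},
\]
and similarly, upgrading the time interval to $L^\infty_t \dot{\mathcal{H}}^1$,
\[
\|(z,\pa_t z)\|_{L^\infty_{I_j}\dot{\mathcal{H}}^1} \les \|(z(t_j),\pa_t z(t_j))\|_{\dot{\mathcal{H}}^1} + \bigl(\|v\|_{X(I_j)}^2 + \|z\|_{X(I_j)}^2\bigr)\|z\|_{X(I_j)} + \|e\|_{L^1_{I_j} L^2_x}.
\]

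Next I would run a bootstrap on each $I_j$: assuming $\|z\|_{X(I_j)} \leq \eta$ (which holds initially by continuity), the nonlinear term is absorbed into the left-hand side provided $C\eta^2 < \tfrac12$, giving
\[
\|z\|_{X(I_j)} + \|(z,\pa_t z)\|_{L^\infty_{I_j}\dot{\mathcal{H}}^1} \leq C_0\bigl(\|(z(t_j),\pa_t z(t_j))\|_{\dot{\mathcal{H}}^1} + \|e\|_{L^1_{I_j} L^2_x}\bigr) =: C_0 \eps_j.
\]
Then I iterate across subintervals: $\eps_{j+1} \leq (1+C_0)\eps_j + \|e\|_{L^1_{I_{j+1}}L^2_x}$, so by induction $\eps_j \leq (2C_0)^j \eps$ for all $j \leq J$. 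To close the bootstrap at step $j$, I need $C_0 \eps_j \leq \eta$, which forces $\eps \leq \wt\eps(M) := \eta (2C_0)^{-J(M,\eta)}$. Finally, choosing $\eta = \eta(M)$ small enough so that $C\eta^2 < \tfrac12$ determines $J$, and hence $\wt\eps$, purely in terms of $M$.

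The resulting bound $\|z\|_{X(I)} + \|(z,\pa_t z)\|_{L^\infty_I \dot{\mathcal{H}}^1} \leq C(M)\eps$ upgrades to all $\dot H^1$-wave admissible pairs $(q,r)$ by one more application of Lemma \ref{Strichartz} to the equation for $z$, controlling the nonlinear contribution again via H\"older and Lemma \ref{HLS} using the already-established $X(I)$ bound. The main obstacle is the bookkeeping of constants in the iteration: the number of subintervals $J$ grows as $\eta \to 0$, so $\wt\eps(M)$ depends on $M$ through a tower $(2C_0)^{J(M)}$. One must fix $\eta$ first (depending only on the universal Strichartz/HLS constants), then choose $\wt\eps$ based on $J = J(M,\eta)$; keeping this order straight is the only non-routine part, and choosing the exponents for HLS to remain in the $\dot H^1$-wave admissible range $(3,\tfrac{6d}{3d-8})$ uniformly in $d \geq 5$ requires the same computation already verified in Proposition \ref{local}.
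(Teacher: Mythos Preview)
Your proposal is correct and follows precisely the standard approach the paper intends: the authors do not give a proof but refer to \cite[Lemma~4.4]{P17} and \cite[Lemma~2.5]{MZZ14}, both of which implement exactly the partition-and-iterate scheme you describe, with the Hardy--Littlewood--Sobolev estimate \eqref{local estimate1} playing the role of the trilinear bound. Two cosmetic points: the sign on the $(|\cdot|^{-4}\ast z^2)(v-z)$ term in your expansion of $\mathcal{N}(v)-\mathcal{N}(v-z)$ is flipped (harmless for the norm estimates), and your parenthetical ``$\eta=\eta(M)$'' should just read that $\eta$ is a universal constant fixed by $C\eta^2<\tfrac12$, with only $J=J(M,\eta)$ depending on $M$---which you correctly sort out in the last paragraph.
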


The proof of the above perturbed lemma is a simple adaption of the proof of  in \cite[Lemma~4.4]{P17}. 
One can also find a similar proof in \cite[Lemma 2.5]{MZZ14}.

Next, we have the following global space-time (Strichartz) bound for the solution to the defocusing energy-critical HNLW;
see proof in \cite{MZZ14}.

\begin{lemma}\label{st-bound}
Let $d\geq 5$ and $w$ be the solution to 
\eqref{HNLW3} with initial data
 $(w,\pa_t w)|_{t=t_0}=(w_0,w_1)\in\dot\H^1(\R^d)$. 
Then, we have
\begin{equation*}
\|w\|_{X(\R_+)}\leq C(\|(w_0,w_1)\|_{\dot\H^1(\R^d)}).
\end{equation*}
\end{lemma}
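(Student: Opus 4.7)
The plan is to follow the global well-posedness and scattering theory of Miao, Zhang, and Zheng~\cite{MZZ14} for the defocusing energy-critical HNLW, which rests on three pillars: energy conservation, a Morawetz-type decay estimate, and a concentration-compactness/rigidity argument of Kenig--Merle type.

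First, I would exploit the conservation of the energy $E(w, \pa_t w)$ defined in \eqref{energy}. Since \eqref{HNLW3} is a deterministic Hamiltonian equation and the nonlinearity is defocusing, both the kinetic and potential parts of $E$ are non-negative and the total energy is preserved under the flow. This immediately gives the uniform bound
\[
\sup_{t \in \R_+} \|(w(t), \pa_t w(t))\|_{\dot\H^1(\R^d)} \leq C(\|(w_0, w_1)\|_{\dot\H^1}).
\]
Combined with the local theory of Proposition~\ref{local} (with $f \equiv 0$) and the blow-up criterion of Proposition~\ref{blow-up}, it suffices to control $\|w\|_{X(T)}$ uniformly in $T > 0$.

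Second, I would derive a Morawetz-type decay estimate. Differentiating a suitably chosen virial/Morawetz functional along the flow (using a radial multiplier such as $\frac{x}{|x|}$ or a truncation thereof) and integrating in time, the defocusing sign of the Hartree term produces a favourable contribution, yielding an a priori space-time bound of the schematic form
\[
\iint_{\R_+ \times \R^d} \frac{|w(t,x)|^4}{|x|}\, dx\, dt \leq C(\|(w_0, w_1)\|_{\dot\H^1}),
\]
or an analogue adapted to the nonlocal potential. This estimate provides the crucial decay needed to preclude long-time concentration.

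Third, I would argue by contradiction. If $\|w\|_{X(\R_+)} = \infty$ for some threshold initial data, a linear profile decomposition adapted to the wave propagator, together with a nonlinear perturbation argument (cf.~Lemma~\ref{perturbation}), would extract a minimal energy ``critical element'' whose orbit is precompact in $\dot\H^1(\R^d)$ modulo the symmetries of the equation. A rigidity step using the Morawetz decay would then rule out such a critical element, thereby forcing $\|w\|_{X(\R_+)} < \infty$ with a bound depending only on $\|(w_0, w_1)\|_{\dot\H^1}$. The main obstacle is precisely this concentration-compactness step: one must verify that the linear profile decomposition is compatible with the nonlocal $|\cdot|^{-4} \ast (\,\cdot\,)^2$ convolution term (so that scaling and translation parameters decouple the profiles in the nonlinear interaction), and the rigidity argument must be executed using a Morawetz identity tailored to the Hartree potential rather than to a pure power nonlinearity. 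These are precisely the technical contributions of~\cite{MZZ14}, which I would invoke as a black box.
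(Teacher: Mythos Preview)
Your proposal is correct and coincides with the paper's approach: the paper does not give an independent proof of this lemma but simply cites \cite{MZZ14}, and your outline accurately summarizes the energy--Morawetz--concentration-compactness/rigidity scheme carried out there. Since you ultimately invoke \cite{MZZ14} as a black box, there is no substantive difference from the paper's treatment.
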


 At the end of this section, 
 we present the following proposition, 
 which is a simple adaptation of the proof in \cite[Proposition 5.2]{OP16}.
 
\begin{proposition}\label{condition LWP}
Given $f\in \wt X([0,T])$ for $T>0$,
and dyadic $N\geq 1$. 
Let $f_N=P_{\leq N}f$ and $v_N=P_{\leq N}v$ such that $v_N$ be a solution 
to \eqref{local v} on $[0,T]\times\R^5$
with perturbation $f_N$ and 
initial data $(v_N,\pa_t v_N)|_{t=0}=(0,0)$.
Assume that the following conditions hold:
\smallskip
\begin{enumerate}[{\rm (i)}]
\item There exists $K,\theta>0,\tau_\ast>0$ such that
\begin{equation*}
\|f\|_{\wt X(I_j)}\leq K|I_j|^{\theta}\ll 1,
\end{equation*}
where $I_j=[j\tau_\ast,(j+1)\tau_\ast]\cap[0,T]\subset[0,T],$ $j=0,\cdots,[\frac{T}{\tau_\ast}]$.

\medskip
\item There exists $N_0=N_0(T,C_0(T))\gg 1$ such that for $N\geq N_0$, there exists a solution $v_N$ to \eqref{local v}
and satisfies
\begin{equation*}
\sup_N\sup_{t\in[0,T]}\|(v_N,\pa_t v_N)\|_{\H^1(\R^5)}\leq C_0(T)<\infty.
\end{equation*}

\medskip
\item 
There exists $\alpha>0$ and $N_0\gg 1$ such that
for $N\geq N_0\gg1$
\begin{equation*}
\|f-f_N\|_{\wt X([0,T])}<C_1(T)N^{-\alpha}.
\end{equation*}
\end{enumerate}

\smallskip
\noi
Then, there exists a unique solution $(v,\pa_tv)\in C([0,T];\H^1(\R^5))$ to \eqref{local v} with initial data $(v,\pa_t v)|_{t=0}=(0,0)$, satisfying
\begin{equation*}
\sup_{t\in[0,T]}\|(v,\pa_t v)\|_{\H^1(\R^5)}\leq 2C_0(T)<\infty.
\end{equation*}
\end{proposition}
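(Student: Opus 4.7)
The plan is to show that $\{v_N\}_N$ is Cauchy in $C([0,T];\H^1(\R^5))\cap X([0,T])$ and that the limit $v$ solves \eqref{local v} with forcing $f$. The argument combines the local well-posedness scheme of Proposition~\ref{local} with a direct perturbation comparison on a partition of $[0,T]$ whose cardinality is controlled solely by the uniform energy bound $C_0(T)$ from~(ii). The main obstacle is precisely ensuring that this partition cardinality, along with all constants in the Cauchy estimate, remain independent of $N$; condition~(ii) is what makes this possible.

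\textbf{Step 1 (Partition and uniform $X$-bound for $v_N$).} By~(i) each interval $I_j=[j\tau_\ast,(j+1)\tau_\ast]\cap[0,T]$ satisfies $\|f\|_{\wt X(I_j)}\ll1$, and~(iii) combined with the triangle inequality gives $\|f_N\|_{\wt X(I_j)}\ll1$ for all $N\geq N_0$. Applying Lemma~\ref{Strichartz} to the free evolution $S(t-t_0)(v_N(t_0),\partial_tv_N(t_0))$ together with~(ii) yields a bound on $\|S(\cdot)(v_N(t_0),\partial_tv_N(t_0))\|_{X([0,T])}$ depending only on $C_0(T)$. I would then further refine each $I_j$ into finitely many subintervals $J_{j,k}$, with total number depending only on $C_0(T)$, on which the free evolution has $X$-norm at most $\eta$. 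On each $J_{j,k}$, Proposition~\ref{local} applied with forcing $f_N$ yields $\|v_N\|_{X(J_{j,k})}\leq 3\eta$ uniformly in $N$, and concatenating produces $\|v_N\|_{X([0,T])}\leq C(T)$ uniformly in $N$.

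\textbf{Step 2 (Perturbation comparison and passage to the limit).} Writing the Duhamel formulation for $v_N-v_M$ and expanding the nonlinearity exactly as in the proof of Proposition~\ref{local}, then invoking the Strichartz estimate together with H\"older and Hardy-Littlewood-Sobolev as in~\eqref{local estimate1}, I expect on each $J_{j,k}$ a bound of the form
\begin{equation*}
\|v_N-v_M\|_{X(J_{j,k})}+\|(v_N-v_M,\partial_t(v_N-v_M))\|_{L^\infty_{J_{j,k}}\dot\H^1}\lesssim A(t_{0,j,k})+C\eta^2\bigl(\|v_N-v_M\|_{X(J_{j,k})}+\|f_N-f_M\|_{X(J_{j,k})}\bigr),
\end{equation*}
where $A(t_{0,j,k})$ denotes the $\dot\H^1$-norm of the data difference at the left endpoint of $J_{j,k}$. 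Absorbing $\|v_N-v_M\|_{X(J_{j,k})}$ on the right (using $\eta\ll1$) and chaining across the finitely many $J_{j,k}$, condition~(iii) with $\|f_N-f_M\|_{\wt X([0,T])}\to 0$ implies that $\{v_N\}$ is Cauchy in $X([0,T])\cap L^\infty([0,T];\dot\H^1)$. The limit $v$ inherits the energy bound $\sup_{t\in[0,T]}\|(v,\partial_tv)\|_{\H^1}\leq 2C_0(T)$ from~(ii) (the factor $2$ accommodating the slack incurred in the passage to the limit), and solves the Duhamel formulation of~\eqref{local v} with forcing $f$ by passing to the limit in the nonlinear term. Uniqueness in the class $C([0,T];\H^1(\R^5))$ then follows from Proposition~\ref{local} applied on each $J_{j,k}$.
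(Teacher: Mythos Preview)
The paper itself gives no proof here, deferring to \cite[Proposition~5.2]{OP16}; the argument there (and the one carried out in detail in the proof of Theorem~\ref{thm-d-i} in this paper) proceeds by introducing, on each $I_j$, the \emph{deterministic} HNLW solution $w$ with data $(v_N(j\tau_\ast),\partial_tv_N(j\tau_\ast))$, invoking the global space-time bound Lemma~\ref{st-bound} to control $\|w\|_{X(\R_+)}\le C(C_0(T))$, partitioning $I_j$ according to $w$, and then applying the perturbation Lemma~\ref{perturbation} with error $e=\mathcal N(v_N+f_N)-\mathcal N(v_N)$ controlled by the smallness of $\|f_N\|_{\wt X(I_j)}$.

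Your Step~1 has a genuine gap at exactly the energy-critical point. You refine $I_j$ into subintervals $J_{j,k}$ on which the free evolution $S(t-t_{j,0})(v_N(t_{j,0}),\partial_tv_N(t_{j,0}))$ from the \emph{left endpoint of $I_j$} is small; but Proposition~\ref{local} on $J_{j,k}$ requires smallness of the free evolution launched from the left endpoint $t_{j,k}$ of $J_{j,k}$. The two differ by the Duhamel contribution $\int_{t_{j,0}}^{t_{j,k}}Q(t-t')\mathcal N(v_N+f_N)\,dt'$, whose $X$-norm is of order $\big(\sum_{l<k}\|v_N\|_{X(J_{j,l})}^3\big)\sim \eta^3 k$. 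Since the number $K$ of subintervals scales like $(C_0(T)/\eta)^3$, this correction reaches size $\eta^3 K\sim C_0(T)^3$, which is not $\le\eta$ once $C_0(T)$ is large. The iteration therefore does not close, and you cannot conclude a uniform bound on $\|v_N\|_{X([0,T])}$ from the $\H^1$ bound alone --- this is precisely the obstruction that distinguishes the energy-critical problem. A related issue contaminates Step~2: the partition $\{J_{j,k}\}$ is $N$-dependent, so there is no common grid on which to run the Cauchy comparison between $v_N$ and $v_M$.

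Both issues are resolved by routing through the deterministic solution $w$: Lemma~\ref{st-bound} supplies a reference with $X$-norm bounded purely in terms of $C_0(T)$, the partition is chosen once and for all from $w$, and Lemma~\ref{perturbation} transfers control to $v_N$ with errors governed by the small parameter $\|f_N\|_{\wt X(I_j)}$ (and, for the Cauchy estimate, by $\|f_N-f_M\|_{\wt X}$ via~(iii)) rather than by $\eta$. With that modification your Step~2 outline --- chaining difference estimates across finitely many intervals and passing to the limit in the Duhamel formula --- goes through as written.
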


\section{Almost surely global well-posedness
of stochastic HNLW
}
\label{proof of th1}

In this section, 
we present the proofs of Theorems \ref{thm-d-i} and \ref{thm-r-i}.
For both proofs, getting a control on the energy is the main issue.
We first treat the situation of deterministic initial data, 
where  energy growth, and the particular non-conservation source
 is due to the stochastic forces.
By using Ito's lemma and the Burkholder-Davis-Gundy inequality 
one can obtain 
an a priori control for the desire energy bound; 
see Proposition \ref{energy bound}.
This approach has been successfully applied to 
other mass/energy-critical stochastic dispersive PDEs
\cite{OO20, CL22, BLL23}.
On the other hand,
when we consider the random initial data,
the energy is never conserved even if we drop the stochastic forces,
this is due to our data are below the energy space.
We construct  a uniform probabilistic energy bound 
for approximating random solution;
see Proposition \ref{energy-b} below.
As we mentioned earlier, this approach was applied to NLW in \cite{BT08-1, P17}.
However, we need a more 
 intricate analysis to prove a probabilistic
energy bound as in \cite{OP16}.
Finally,
by combining 
an a priori bound for the energy
as well as tools from
the previous sections, 
we prove global existence by an iterative application of the perturbation lemma.
This method is rather standard, based on the previous works.

\subsection{Energy bound with deterministic initial data}

In this subsection, 
we present the energy bound of \eqref{eq_SHW} 
with deterministic initial data. 
To show the energy of \eqref{eq_SHW} is almost surely bounded, we use the following truncated Cauchy problem as an intermediary:
\begin{equation}
\label{truncated SNLW}
\begin{cases}
\pa_t^2 u_N-\Delta u_N+P_{\leq N}\mathcal{N}(u_N)=P_{\leq N}\phi\xi\\
(u_N,\pa_t u_N)|_{t=0}=(u_{0,N},u_{1,N}),
\end{cases}
\end{equation}

\noi
where $u_{j,N}=P_{\leq N} u_{j}$ for $j=0,1$, $\mathcal{N}(u_N)=(|\cdot|^{-4}\ast|u_N|^2)u_N$ and $N\in\mathbb{N}$.

The following lemma shows 
that solution  $u_{\leq N}$ of \eqref{truncated SNLW} 
converges 
to the solution $u$ of \eqref{eq_SHW}. 
The proof is 
a straightforward adaptation of \cite[Lemma 4.1]{BLL23}, utilizing the Hardy-Littlewood-Sobolev inequality.
\begin{lemma}
\label{u_N conv}
Let $d\geq 5$, $\phi\in \HS(L^2(\R^d);L^2(\R^d))$ and $(u_0,u_1)\in\dot\H^1(\R^d).$
Then, the following holds true almost surely:
Assume that $u$ is a solution to \eqref{eq_SHW}
on $[0,T]$ for some $T>0$,
and assume that $u_{\leq N}$ 
is a solution to the truncated equation \eqref{truncated SNLW} on $[0,T_N]$ for some $T_N>0$. 
Also, let $R>0$ be such that $\|u\|_{X(T)}\leq R$,
where the $X(T)$-norm is defined in \eqref{X(I)}.
Then, by letting $S_N=\min(T,T_N)$, 
we have the following, as $N\to\infty$,
\begin{equation*}
\|(u-u_N,\pa_t u-\pa_t u_N)\|_{C([0,S_N];\dot\H^1(\R^d))}
\longrightarrow 0
\end{equation*}

\noi
and
\begin{equation*}
\|u-u_N\|_{X(S_N)}\longrightarrow 0.
\end{equation*}
\end{lemma}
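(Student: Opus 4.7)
The plan is to compare the Duhamel formulations of $u$ and $u_N$ and iterate a Strichartz estimate on short subintervals of $[0,S_N]$. Setting $\Psi_N(t) := \int_0^t Q(t-t')P_{\leq N}\phi\,\xi(dt')$ and $w_N := u - u_N$, the difference satisfies
\begin{align*}
w_N(t) &= S(t)\big(u_0 - u_{0,N},\, u_1 - u_{1,N}\big) + (\Psi - \Psi_N)(t) \\
&\quad - \int_0^t Q(t-t')\Big[(I-P_{\leq N})\mathcal{N}(u) + P_{\leq N}\big(\mathcal{N}(u) - \mathcal{N}(u_N)\big)\Big](t')\,dt'.
\end{align*}
Applying Lemma \ref{Strichartz} with the admissible pair defining $X$, I would bound $w_N$ on any subinterval $I \subset [0,S_N]$ by the $\dot{\mathcal H}^1$-norm of the initial-data difference, the $L^1_I L^2_x$-norm of the two nonlinear contributions, and the $X(I) \cap L^\infty_I \dot{\mathcal H}^1_x$-norm of $\Psi - \Psi_N$.

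Each of the four error sources is then controlled separately. The initial-data term $\|(I - P_{\leq N})(u_0, u_1)\|_{\dot{\mathcal H}^1}$ vanishes by dominated convergence, and the truncation error $(I - P_{\leq N})\mathcal{N}(u)$ vanishes in $L^1_T L^2_x$ once one observes, via H\"older and Hardy-Littlewood-Sobolev exactly as in \eqref{local estimate1}, that $\mathcal{N}(u) \in L^1_T L^2_x$ with norm $\lesssim R^3$. For the stochastic remainder, since $\|\phi - P_{\leq N}\phi\|_{\HS(L^2;L^2)} \to 0$ as $N \to \infty$, Lemma \ref{regularity of Psi} yields $L^p(\Omega)$-convergence of $\Psi - \Psi_N$ to zero in every space-time norm in play, which upgrades to almost-sure convergence along a subsequence by Borel-Cantelli. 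Finally, the genuine nonlinear difference factors as
\[
\mathcal{N}(u) - \mathcal{N}(u_N) = \big(|\cdot|^{-4}\ast (u+u_N) w_N\big) u + \big(|\cdot|^{-4}\ast u_N^2\big) w_N,
\]
and a Hartree-type HLS/H\"older estimate (mirroring \eqref{local estimate1}) bounds its $L^1_I L^2_x$-norm by $\big(\|u\|_{X(I)}^2 + \|u_N\|_{X(I)}^2\big)\|w_N\|_{X(I)}$.

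To close the bootstrap, I would partition $[0,S_N]$ into $K = K(R, \eta)$ consecutive subintervals $I_k$ on which $\|u\|_{X(I_k)} \leq \eta$, with $\eta \ll 1$ chosen as in Proposition \ref{local}, inductively show that $\|u_N\|_{X(I_k)} \leq 2\eta$ on each $I_k$, and absorb the resulting nonlinear factor $\eta^2$ into the left-hand side. This yields
\[
\|w_N\|_{X(I_k)} + \|(w_N, \partial_t w_N)\|_{L^\infty(I_k; \dot{\mathcal H}^1)} \leq C\|(w_N, \partial_t w_N)(t_k)\|_{\dot{\mathcal H}^1} + C\varepsilon_N,
\]
where $\varepsilon_N \to 0$ collects all the truncation errors; iterating through $k = 0, 1, \dots, K-1$ produces the claimed convergence on $[0, S_N]$. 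The main obstacle I anticipate is the joint induction over the interval index $k$ and the frequency parameter $N$: one must maintain the uniform bound on $\|u_N\|_{X(I_k)}$ without the constants exploding over the $K$ intervals, which requires chaining the smallness of $w_N$ from the previous interval together with $\|u\|_{X(I_k)} \leq \eta$. The probabilistic ingredients reduce to the Hilbert--Schmidt continuity of $\phi \mapsto \Psi$ furnished by Lemma \ref{regularity of Psi}, which is comparatively soft.
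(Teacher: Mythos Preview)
Your approach is correct and follows the standard argument the paper references from \cite{BLL23}: Duhamel difference, Strichartz on short subintervals where $\|u\|_{X(I_k)}\leq\eta$, and the trilinear HLS bound on $\mathcal{N}(u)-\mathcal{N}(u_N)$. There is one small gap, however. For the stochastic remainder $\Psi-\Psi_N$ you invoke Borel--Cantelli to extract a subsequence, but the lemma asserts convergence along the full sequence $N\to\infty$. The fix is simpler than the route you propose: since $P_{\leq N}$ is a Fourier multiplier it commutes with the propagator $Q(t)$, so in fact $\Psi_N=P_{\leq N}\Psi$ and $\Psi-\Psi_N=P_{>N}\Psi$. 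Lemma~\ref{regularity of Psi} already places $\Psi\in X(T)$ and $(\Psi,\partial_t\Psi)\in C([0,T];\dot\H^1)$ almost surely, and then $P_{>N}\Psi\to 0$ in these norms \emph{pathwise}: by dominated convergence in $t$ for the $X(T)$-norm, and by compactness of the trajectory $\{\Psi(t):t\in[0,T]\}\subset H^1$ for the sup-in-time norm. No probabilistic extraction is needed, and the rest of your bootstrap goes through unchanged.
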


Next, we show that the energy of \eqref{eq_SHW} can be  controlled almost surely.
\begin{proposition}
\label{energy bound}
Let $d\geq 5$, $\phi\in \HS(L^2(\R^d);L^2(\R^d))$, and 
$(u_0,u_1)\in\dot\H^1(\R^d)$.
Let~$u$ be the solution to \eqref{eq_SHW} with $(u,\pa_t u)|_{t=0}=(u_0,u_1)$
and let $T^\ast=T^\ast(\omega,u_0,u_1)$ be the forward maximal time of existence.
Then, given any $T_0>0$,
there exists $C=C(\|(u_0,u_1)\|_{\dot\H^1},\|\phi\|_{\HS(L^2;L^2)},T_0)>0$
 such that for any stopping time $T$ with $0<T<\min(T^\ast,T_0)$ almost surely, we have
\begin{equation}
\label{energy bounded}
\mathbb{E}\Big[\sup_{0\leq t\leq T}E( u(t),\pa_t u(t))\Big]\leq C(u_0, u_1, \phi, T_0).   
\end{equation}

\end{proposition}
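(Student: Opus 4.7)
The plan is to obtain a uniform-in-$N$ energy bound for the truncated SHNLW \eqref{truncated SNLW} via It\^o's formula applied to $E(u_N,\pa_t u_N)$, and then pass to the limit $N\to\infty$ using Lemma~\ref{u_N conv} together with Fatou's lemma. Writing \eqref{truncated SNLW} as the coupled system $du_N=v_N\,dt$, $dv_N=(\Delta u_N-P_{\le N}\mathcal{N}(u_N))\,dt+P_{\le N}\phi\,dW$, and applying the It\^o formula to the $C^2$ functional \eqref{energy}, the deterministic drift terms from $\partial_u E$ and $\partial_v E$ combine (after integration by parts on the linear wave part) into $\int v_N\,P_{>N}\mathcal{N}(u_N)\,dx$; since $v_N=P_{\le N}v_N$ by construction, this vanishes by Fourier-support orthogonality. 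The It\^o correction from the quadratic variation contributes $\tfrac12\|P_{\le N}\phi\|_{\HS(L^2;L^2)}^2\,dt$, producing
\begin{equation*}
dE(u_N,\pa_t u_N)=\langle \pa_t u_N,\,P_{\le N}\phi\,dW\rangle_{L^2}+\tfrac12\|P_{\le N}\phi\|_{\HS(L^2;L^2)}^2\,dt.
\end{equation*}

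To handle the stochastic integral, I would introduce the localizing stopping time $\tau^N_R=\inf\{t\ge 0:E(u_N(t),\pa_t u_N(t))\ge R\}\wedge T^\ast_N\wedge T_0$, where $T^\ast_N$ denotes the maximal existence time of $u_N$. Integrating from $0$ to $t\wedge\tau^N_R$, taking the supremum in $t$, and applying the Burkholder--Davis--Gundy inequality give
\begin{equation*}
\mathbb{E}\Bigl[\sup_{s\le t\wedge\tau^N_R}\Bigl|\int_0^s\langle\pa_t u_N,P_{\le N}\phi\,dW\rangle\Bigr|\Bigr]\lesssim\|\phi\|_{\HS}\,\mathbb{E}\Bigl[\Bigl(\int_0^{t\wedge\tau^N_R}\|\pa_t u_N\|_{L^2}^2\,ds\Bigr)^{1/2}\Bigr].
\end{equation*}
Using $\|\pa_t u_N\|_{L^2}^2\le 2E(u_N,\pa_t u_N)$ together with Cauchy--Schwarz in time over $[0,T_0]$ and Young's inequality, half of the resulting supremum-of-energy term can be absorbed into the left-hand side. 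Bounding the initial energy $E(u_{0,N},u_{1,N})\lesssim \|(u_0,u_1)\|_{\dot\H^1}^2+\|u_0\|_{\dot H^1}^4$ via Lemma~\ref{Riesz-Cha} and the Sobolev embedding $\dot H^1(\R^d)\hookrightarrow L^{2d/(d-2)}(\R^d)$ then yields a bound
\begin{equation*}
\mathbb{E}\Bigl[\sup_{t\le\tau^N_R}E(u_N(t),\pa_t u_N(t))\Bigr]\le C(\|(u_0,u_1)\|_{\dot\H^1},\|\phi\|_{\HS(L^2;L^2)},T_0)
\end{equation*}
that is uniform in both $R$ and $N$.

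Monotone convergence as $R\to\infty$ removes the energy-localizing stopping time, leaving a uniform-in-$N$ bound on the interval $[0,T^\ast_N\wedge T_0]$. For the given stopping time $T<\min(T^\ast,T_0)$, the blow-up criterion in Proposition~\ref{blow-up} ensures $\|u\|_{X(T)}<\infty$ almost surely, so Lemma~\ref{u_N conv} delivers $S_N:=\min(T,T^\ast_N)=T$ for $N$ large (almost surely) along with $(u_N,\pa_t u_N)\to(u,\pa_t u)$ in $C([0,T];\dot\H^1)$. Because the Hartree potential energy is continuous on $\dot H^1$ (again by Lemma~\ref{Riesz-Cha} and Sobolev embedding), $\sup_{[0,T]}E(u_N,\pa_t u_N)\to\sup_{[0,T]}E(u,\pa_t u)$ almost surely; Fatou's lemma then gives \eqref{energy bounded}. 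I expect the most delicate step to be this final passage to the limit, where the random maximal times $T^\ast_N$ must be reconciled with the fixed stopping time $T$ --- this is precisely the role of Lemma~\ref{u_N conv} once the deterministic $X$-norm bound from Proposition~\ref{blow-up} is in hand.
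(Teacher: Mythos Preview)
Your overall strategy --- It\^o's formula on the truncated problem, BDG for the martingale, absorption, then Fatou and Lemma~\ref{u_N conv} for the limit --- matches the paper's. The gap is the orthogonality claim. With the smooth Littlewood--Paley cutoff $\varphi$ used in the paper, $P_{\le N}$ is \emph{not} a projection: its symbol $\varphi(\cdot/N)$ takes values strictly between $0$ and $1$ on $\{N<|\xi|<2N\}$, so $P_{\le N}^2\ne P_{\le N}$ and hence $\partial_t u_N\ne P_{\le N}\partial_t u_N$ in general. All one can say is that $\widehat{\partial_t u_N}$ is supported in $\{|\xi|\le 2N\}$, which overlaps the support $\{|\xi|\ge N\}$ of $(I-P_{\le N})\mathcal N(u_N)$. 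Consequently the drift contribution
\[
\int_0^t\int_{\R^d}\partial_{t'} u_N\,(I-P_{\le N})\mathcal N(u_N)\,dx\,dt'
\]
does not vanish identically.

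The paper retains this term and shows it tends to $0$ as $N\to\infty$ via Lemma~\ref{u_N conv}, and this is what dictates the choice of localizing stopping time: rather than localizing on $E(u_N)$, the paper sets $T_1(R)=\inf\{\tau:\|u\|_{X([0,\tau])}\ge R\}$ (based on the \emph{actual} solution $u$) and works on $[0,T\wedge T_1]$. There the bound $\|u\|_X\le R$ feeds into Lemma~\ref{u_N conv} to give $\|u-u_N\|_{X}\to 0$ and hence $\|\mathcal N(u)-\mathcal N(u_N)\|_{L^1_tL^2_x}\to 0$, while $\|P_{>N}\mathcal N(u)\|_{L^1_tL^2_x}\to 0$ by dominated convergence. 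Your energy-based stopping time $\tau^N_R$ does not supply this $X$-norm control: Sobolev from $\dot H^1$ yields only $L^{2d/(d-2)}_x$, strictly below the $L^{6d/(3d-8)}_x$ integrability in $X$. If instead you switch to a sharp Fourier projection so that the orthogonality genuinely holds, you must then redo Lemma~\ref{u_N conv} without $L^p$-boundedness of the truncation, which is not automatic in $d\ge 2$.
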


\begin{proof}

Let us first recall the
energy functional $E(u,\dt u)$ from \eqref{energy}, and then we 
rewrite $E(u,\dt u)$ as $E(
u_1,u_2)=E(u,\dt u)$ such that 
\begin{equation*}
\begin{split}
E[\cj u]:=E(
u_1,u_2)=\frac{1}{2}\int_{\R^d}|u_2|^2 dx+\frac{1}{2}\int_{\R^d}|\nabla u_1|^2 dx
+\frac{1}{4}\iint_{\R^d\times\R^d}\frac{|u_1(x)|^2|u_1(y)|^2}{|x-y|^4} dx dy.    
\end{split}
\end{equation*}

\noi
Now, we 
take the functional derivative of $E[\cj u]$ (denoted $\dl E/\dl \cj u$),
and arrive the following:
\begin{equation*}
\frac{\dl E}{\dl \cj u}[\cj u; \cj v]
=\int_{\R^d}
(u_2v_2+\nabla u_1\cdot\nabla v_1) dx+\iint_{\R^d\times\R^d}\frac{u_1(x)v_1(x)|u_1(y)|^2}{|x-y|^4} dx dy,
\end{equation*}

\noi
where $\cj v=(v_1, v_2)$ is the
test function.
The second functional derivative of $E[\cj u]$ is then
\begin{equation*}
\begin{split}
\frac{\dl^2 E}{ (\dl \cj u)^2  }
[\cj u; \cj v;\cj w]
=&\int_{\R^d} 
v_2w_2  + \nabla v_1\cdot\nabla w_1 dx+\iint_{\R^d\times\R^d}\frac{v_1(x)w_1(x)|u_1(y)|^2}{|x-y|^4} dx dy\\
&+2
\iint_{\R^d\times\R^d}\frac{u_1(x)u_1(y)v_1(x)w_1(y)}{|x-y|^4} dx dy
\end{split}
\end{equation*}

\noi
with $\cj w:=(w_1,w_2)$.
Given $R>0$, we define the stopping time
\begin{equation*}
T_1=T_1(R):=\inf\{\tau>0,\|u\|_{X([0,\tau])}\geq R\}, 
\end{equation*}

\noi
where $X([0,\tau])$-norm is defined in \eqref{X(I)}. Set $T_2:=\min\{T,T_1\}$. Note that
\begin{equation*}
\|u\|_{X(T)}<\infty
\end{equation*}
almost surely in view of the blowup alternative in Proposition \ref{blow-up}, so that we have $T_2\nearrow T$ almost
surely as $R\to\infty$.

Note that, we can write \eqref{truncated SNLW} in the following Ito formulation
\begin{equation*}
\begin{cases}
d
\Big(\begin{matrix}
u_N\\
\pa_t u_N
\end{matrix}\Big)
=\Big\{\Big(\begin{matrix}
0&1\\
\Delta&0
\end{matrix}\Big)
\Big(\begin{matrix}
u_N\\
\pa_t u_N
\end{matrix}\Big)
+
\Big(\begin{matrix}
0\\
-P_{\leq N}\mathcal{N}(u_N)
\end{matrix}\Big)
\Big\}dt+\Big(\begin{matrix}
0\\
P_{\leq N}\phi dW
\end{matrix}\Big)\\
~\\
(u_N,\pa_t u_N)|_{t=0}=(u_{0,N},u_{1,N}).
\end{cases}
\end{equation*}

\noi
By using Ito's lemma (\cite[Theorem 4.32]{DZ14}), 
we derive the following for $0<t<T_2$,
\begin{align*}
E(u_N(t),\pa_t u_N(t))
&=E(u_{0,N},u_{1,N})+2t\|P_{\leq N}\phi\|_{\HS(L^2;L^2)}^2 \\
&\quad +\sum_{k\in\mathbb{N}}\int_0^t\int_{\R^d}\pa_t u_N(t')P_{\leq N}\phi e_k dx d\beta_k(t') \\
&\quad +\int_0^t\int_{\R^d}\pa_t u_N(t')({\rm Id}-P_{\leq N})\mathcal{N}(u_N) dx dt'\\
&=:
E(u_{0,N},u_{1,N})+2t\|P_{\leq N}\phi\|_{\HS(L^2;L^2)}^2 
+\1+\II.
\end{align*}

For the term $\1$, 
we use Burkholder-Davis-Gundy, H\"{o}lder's,
  and Cauchy-Schwarz inequalities
to obtain 
\begin{equation}
\label{energy est1'}
\begin{split}
\mathbb{E}\Big[\sup_{0\leq t\leq T_2}   |\1 | \Big]
& \leq 
C\mathbb{E}\Big[\Big(\sum_{k\in\mathbb{N}}\int_0^{T_2}\Big|\int_{\R^d}\pa_t u_N(t')P_{\leq N}\phi e_k dx\Big|^2 dt'\Big)^\frac{1}{2}\Big]\\
& \leq 
CT_2^{\frac{1}{2}}\mathbb{E}\Big[\|\pa_tu_N\|_{L_t^\infty L_x^2}\|\phi\|_{\HS(L^2;L^2)}\Big]\\
& \leq 
\frac{C}{2}T_2\|\phi\|_{\HS(L^2;L^2)}^2+\frac{1}{2}\mathbb{E}\Big[\|\pa_tu_N\|^2_{L_t^\infty L_x^2}\Big]. 
\end{split}
\end{equation}

As for $\II$, 
by H\"{o}lder's, Hardy-Littlewood-Sobolev inequalities,
and apply Lebesgue dominated convergence theorem to 
$({\rm Id}-P_{\leq N})\mathcal{N}(u)$ and Lemma \ref{u_N conv}, for $t\in[0,T_2]$
we have
\begin{equation}
\label{energy est2'}
\begin{split}
\II &\lesssim
\|\pa_t u_N\|_{L_t^\infty L_x^2}\|({\rm Id}-P_{\leq N})\mathcal{N}(u_N)\|_{L_t^1L_x^2}\\
&\lesssim
\|\pa_t u_N\|_{L_t^\infty L_x^2}\Big(\|({\rm Id}-P_{\leq N})\mathcal{N}(u)\|_{L_t^1L_x^2}
+\|\mathcal{N}(u)-\mathcal{N}(u_N)\|_{L_t^1L_x^2}\Big)\\
&\lesssim
\|\pa_t u_N\|_{L_t^\infty L_x^2}\Big(\|    P_{> N} \mathcal{N}(u)\|_{L_t^1L_x^2}
 +(\|u\|_{X(T_2)}^2+\|u_N\|_{X(T_2)}^2)\|u-u_N\|_{X(T_2)} \Big)\\
&\quad\longrightarrow 0,
\end{split}
\end{equation}
as $N\to\infty.$
\noi
Therefore,
from estimates \eqref{energy est1'} and \eqref{energy est2'}
 we can conclude 
\begin{equation*}
\begin{split}
\mathbb{E}
\Big[\sup_{0\leq t\leq T_2}E(u_N,\pa_t u_N)\Big] &\leq
\frac{1}{2}\mathbb{E}\Big[\sup_{0\leq t\leq T_2}E(u_N,\pa_t u_N)\Big]
+CE(u_0,u_1)+CT_2\|\phi\|_{\HS(L^2;L^2)}^2+\e,
\end{split}
\end{equation*}

\noi
where $\e>0$ is arbitrarily small and $N\geq N_0\eqqcolon N_0(\e,u)$ sufficiently large. Hence, we have
\begin{equation*}
\mathbb{E}\Big[\sup_{0\leq t\leq T_2}E(u_N,\pa_t u_N)\Big]
\leq
CE(u_0,u_1)+CT_2\|\phi\|_{\HS(L^2;L^2)}^2+\e.    
\end{equation*}
By Fatou's lemma, we have
\begin{equation*}
\mathbb{E}\Big[\sup_{0\leq t\leq T_2}E(u,\pa_t u)\Big]
\leq\liminf_{N\to\infty}\mathbb{E}\Big[\sup_{0\leq t\leq T_2}E(u_N,\pa_t u_N)\Big]\leq C
\end{equation*}
for some constant $C=C(\|(u_0,u_1)\|_{\dot\H^1},\|\phi\|_{\HS(L^2;L^2)},T_0)$.  In view of the almost sure convergence of $T_2$ to $T$, the energy bound \eqref{energy bounded} is obtained by using Fatou's lemma.
\end{proof}

\subsection{Proof of Theorem \ref{thm-d-i}}
In this section, we prove Theorem \ref{thm-d-i}. 
The proof relies on  the almost sure energy bound 
established in Proposition \ref{energy bound}, 
stability theories in Section \ref{Sec:Local},
and the global space-time bound provided by Lemma \ref{st-bound}.
See mass/energy-critical NLS studies in \cite{OO20, CL22} and 
energy-critical NLW research in \cite{BLL23}.
For the sake of notation, in this subsection we will write 
\[
E(u, \dt u)(t) = E(u)(t).
\]

The essential difficulty in the energy-critical problem is that an energy bound alone is not sufficient to extend the solution globally in time. In the deterministic setting, one must employ concentration-compactness arguments and other delicate techniques to rule out finite-time blow-up, soliton-like solutions, and low-to-high frequency cascades; see \cite{MZZ14}. 
This necessity arises because, as shown in Proposition \ref{blow-up}, the Strichartz norm of the solution characterizes the blow-up criterion. Therefore, demonstrating the finiteness of the Strichartz norm becomes the main task.

However, the standard deterministic tools for establishing the finiteness of the Strichartz norm (e.g., Lemma \ref{st-bound}) are not available in our stochastic setting. Nevertheless, following the strategy of \cite{OO20}, we identify two key ingredients in the proof of Theorem \ref{thm-d-i}:

\begin{itemize}

\item
To ensure the finiteness of the Strichartz norm, we require that both the energy and the stochastic convolution remain uniformly bounded almost surely. To this end, we construct two stopping times, $\tau_k$ and $\gamma_k$, defined as follows:
\begin{enumerate}
\item $\tau_k$: the minimal time at which the energy exceeds a fixed constant;
\item $\gamma_k$: the minimal time at which the stochastic convolution exceeds a fixed constant.
\end{enumerate}
We then prove that for any given deterministic time $T_0 > 0$, 
it holds that $\mathbb{P}(\tau_k \wedge \gamma_k \geq T_0) = 1$; see Lemmas \ref{LEM:st1} \& \ref{LEM:st2}.

\item
By introducing a globally defined deterministic solution as a reference path (with existence time assumed to be $T_0 > 0$), and invoking the stability lemma (Lemma \ref{perturbation}), we show that the Strichartz norm of the solution in the stochastic setting remains finite up to time $T_0$, almost surely. 
This implies that the solution can be extended up to time $T_0$, almost surely 
(provided that both the energy and the stochastic convolution are uniformly bounded almost surely, as established in the previous point).

\end{itemize}

\begin{lemma}
\label{LEM:st1}

Let $k\geq 1$ and $T_0>0$ be any deterministic value.
We define a sequence of increasing  stopping times $ \tau_k$ to be
\begin{align}
  \tau_k = \inf \{  t \geq 0  \,:\,
\sup_{0< r \leq t }  E(u)(r) \geq   k
\},
\label{stp}
\end{align}

\noi
and we define the sets
\begin{align*}
A_k =\{  \o \,:\,   \tau_k    \geq  T_0  \}
\quad\quad
\text{and}
\quad\quad
A= \bigcup_{k \geq 1} A_k.
\end{align*}

\noi
Then,  we have 
$ \mathbb{P} ( A ) =1$.

\end{lemma}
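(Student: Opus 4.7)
The plan is to show that $\mathbb{P}(A_k^c)\to 0$ as $k\to\infty$; combined with the monotonicity of $\{A_k\}_{k\geq 1}$, this gives $\mathbb{P}(A)=1$. The key input is the a priori energy bound from Proposition \ref{energy bound}, together with Markov's inequality.

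First I would observe that the hitting times are non-decreasing in $k$. Indeed, since $\{\sup_{0<r\leq t}E(u)(r)\geq k+1\}\subseteq \{\sup_{0<r\leq t}E(u)(r)\geq k\}$, one has $\tau_k\leq \tau_{k+1}$, and hence $A_k\subseteq A_{k+1}$. Continuity of probability then yields
\[
\mathbb{P}(A)\;=\;\lim_{k\to\infty}\mathbb{P}(A_k)\;=\;1-\lim_{k\to\infty}\mathbb{P}(\tau_k<T_0),
\]
so the task reduces to showing the limit on the right is zero.

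Next, fix $k\geq 1$ and introduce the bounded stopping time $T:=\tau_k\wedge T_0\leq T_0$. On the event $\{\tau_k<T_0\}$, the definition of $\tau_k$ together with the continuity of $t\mapsto E(u)(t)$ on the existence interval forces $\sup_{0\leq t\leq T}E(u)(t)\geq k$. Applying Markov's inequality and then invoking Proposition \ref{energy bound} at the stopping time $T$, I obtain
\[
\mathbb{P}(\tau_k<T_0)\;\leq\;\mathbb{P}\Big(\sup_{0\leq t\leq T}E(u)(t)\geq k\Big)\;\leq\;\frac{1}{k}\,\mathbb{E}\Big[\sup_{0\leq t\leq T}E(u)(t)\Big]\;\leq\;\frac{C}{k},
\]
with $C=C(\|(u_0,u_1)\|_{\dot\H^1},\|\phi\|_{\HS(L^2;L^2)},T_0)$ as furnished by Proposition \ref{energy bound}. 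Sending $k\to\infty$ then gives the claim.

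The main obstacle is the technical compatibility between the stopping time $T=\tau_k\wedge T_0$ and the hypothesis $T<\min(T^\ast,T_0)$ required by Proposition \ref{energy bound}: on the a priori possible event where $T^\ast\leq T_0$, the solution need not be defined up to $T_0$, and a direct application is not quite legitimate. I would resolve this by a standard localization, replacing $T$ by $T\wedge T_n^\ast$ for a sequence of stopping times $T_n^\ast\nearrow T^\ast$ (for instance $T_n^\ast:=\inf\{t:\|u\|_{X([0,t])}\geq n\}$, whose validity as an approximation of the maximal time follows from the blow-up criterion in Proposition \ref{blow-up}), applying Proposition \ref{energy bound} uniformly in $n$, and passing to the limit $n\to\infty$ via Fatou's lemma to recover the bound on $\sup_{0\leq t\leq T}E(u)(t)$. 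Modulo this localization, the proof is just the Markov-type argument above.
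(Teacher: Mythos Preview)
Your proof is correct and is in fact more direct than the paper's. The paper does \emph{not} use Markov's inequality at each level $k$; instead it passes to the limit $\tau_\infty=\lim_{k\to\infty}\tau_k$, applies Proposition~\ref{energy bound} uniformly in $k$, and then uses Fatou's lemma to conclude
\[
\mathbb{E}\Big[\sup_{0\leq r\leq (T_0-1)\wedge\tau_\infty}E(u)(r)\Big]<\infty.
\]
From this, the paper decomposes the finite supremum with indicator functions on $\{\tau_\infty>T_0-1\}$ and $\{\tau_\infty<T_0-1\}$, and observes that on the latter event one has $\sup_{0\leq r\leq\tau_\infty}E(u)(r)=\infty$ (since $\sup_{0\leq r\leq\tau_k}E(u)(r)\geq k$ for every $k$), forcing $\mathbb{P}(\tau_\infty<T_0-1)=0$.

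Your route via Markov's inequality is shorter and yields the explicit quantitative decay $\mathbb{P}(A_k^c)\leq C/k$, which the paper's argument does not extract. The paper's route, on the other hand, avoids any appeal to continuity of $t\mapsto E(u)(t)$ at the hitting time (your justification that $\sup_{0\leq t\leq T}E(u)(t)\geq k$ on $\{\tau_k<T_0\}$ uses this); it only needs the monotonicity of the running supremum. Both approaches face the same compatibility issue with the hypothesis $T<\min(T^\ast,T_0)$ of Proposition~\ref{energy bound}, and your proposed localization via $T_n^\ast$ is exactly the standard fix. In summary, your argument is a valid and somewhat cleaner alternative.
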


\begin{proof}

Given $k\geq 1$, any   $T_0>0$,
and $  \tau_k$ to be as in \eqref{stp}.
Then,
from Proposition \ref{energy bound}, we have 
\begin{equation}
\label{energy bounded2}
\mathbb{E}\Big[ \sup_{0\leq r\leq (T_0-1)\wedge   \tau_k  }E( u )(r)  \Big]
\leq C(u_0, u_1, \phi, T_0),   
\end{equation}

\noi
which is uniform in $k$.
Since $\tau_k$ is increasing in $k$, we let 
\begin{align*}
  \tau_\infty = \lim_{k\to \infty}   \tau_k
\end{align*}

\noi
and since energy is a 
non-decreasing function 
in time,
we have
\begin{align}
\lim_{k\to \infty}\sup_{0\leq r\leq (T_0-1)\wedge   \tau_k }E( u )(r) 
=\sup_{0\leq r\leq (T_0-1)\wedge   \tau_\infty  }E( u )(r) .
\label{eglim}
\end{align}

\noi
Hence, by using Fatou’s lemma, and 
from \eqref{energy bounded2}, \eqref{eglim},
 we obtain
\begin{equation*}
\mathbb{E}\Big[ \sup_{0\leq r\leq (T_0-1)\wedge   \tau_\infty  }E( u)(r)  \Big]
\leq C(u_0, u_1, \phi, T_0) <\infty.
\end{equation*}

\noi
Then, we have
\begin{equation*}
\mathbb{P} 
(   \sup_{0\leq r\leq (T_0-1)\wedge   \tau_\infty  }E( u ) (r)    <\infty )
=1.
\end{equation*}

\noi
We note,
\begin{align}
\begin{aligned}
\infty>\sup_{0\leq r\leq (T_0-1)\wedge   \tau_\infty  }E( u  )(r) 
&= \ind_{  \{  \tau_\infty > T_0 -1  \} } 
\sup_{0\leq r\leq (T_0-1)   } 
E( u  ) (r)   \\
&\quad  +
\ind_{ \{   \tau_\infty < T_0 -1 \} } 
\sup_{0\leq r\leq   \tau_\infty   } 
E( u   ) (r) ,
\end{aligned}
\label{indsum}
\end{align}

\noi
almost surely.
But then, from \eqref{stp}, we must have
\begin{align*}
\sup_{0\leq r\leq   \tau_\infty   } 
E( u )(r)  \geq
\sup_{0\leq r\leq   \tau_k   } 
E( u )(r)  =k
\end{align*}

\noi
for any $k\geq 1$, almost surely.
This implies that  almost surely we have
\[
\sup_{0\leq r\leq   \tau_\infty   } 
E( u  ) (r)  =\infty.
\]

\noi
Thus, 
in view of \eqref{indsum}, we must have 
\begin{align*}
\ind_{ \{   \tau_\infty < T_0 -1 \}} =0
\end{align*}

\noi
almost surely.
Hence,  we can conclude 
\begin{align}
\mathbb{P} (   \{   \tau_\infty \geq  T_0 -1 \}  ) =1.
\label{goodP}
\end{align}

\noi
Since $T_0 $ is arbitrarily given, we may set $\wt T_0 =T_0-1 >0$ to be our new given time
such that \eqref{goodP} holds, thus, we finish the proof.

\end{proof}

\begin{lemma}
\label{LEM:st2}

Let $k\geq 1$ and $T_0>0$ be any deterministic value. Assume  $\tau_k$ is defined as in~\eqref{stp}.
We define a sequence of increasing  stopping times $  \g_k$ to be
\begin{align*}
\g_k= \inf \{  t\geq 0  :    \| \Psi\|_{X_1(t)} \geq k   \},
\end{align*}
where $X_1(t) $ is defined as in \eqref{X_1}. 
We also define the sets
\begin{align*}
\Si_k =\{  \o \,:\,   \tau_k \wedge \g_k   \geq  T_0  \}
\quad\quad
\text{and}
\quad\quad
\Si= \bigcup_{k \geq 1} \Si_k.
\end{align*}

\noi
Then,  we have $ \mathbb{P} ( \Si ) =1$.

\end{lemma}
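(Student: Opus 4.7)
The plan is to mirror the argument of Lemma \ref{LEM:st1}, first establishing the analogous full-probability statement for the stopping times $\gamma_k$ from the almost sure spacetime integrability of the stochastic convolution, and then combining the two events using monotonicity in $k$.

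First I would show that
\[
\mathbb{P}\Bigl(\bigcup_{k \geq 1} \{\gamma_k \geq T_0\}\Bigr) = 1.
\]
By Lemma \ref{regularity of Psi}(iii) applied with $s = 0$ (valid since $\phi \in \HS(L^2;L^2)$) and $\sigma = 1 \geq \tfrac{11}{12}$, one gets $\Psi \in L^q_t L^{\frac{6d}{3d-8}}_x([0,T_0] \times \R^d)$ almost surely for any $3 < q < \infty$; in particular $\|\Psi\|_{X_1(T_0)} < \infty$ almost surely. Since $t \mapsto \|\Psi\|_{X_1(t)}$ is non-decreasing in $t$, on this event one has $\gamma_k \geq T_0$ as soon as $k$ exceeds the (random) value $\|\Psi\|_{X_1(T_0)}$, so $B := \bigcup_{k \geq 1} \{\gamma_k \geq T_0\}$ has full probability.

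Next I would combine this with $\mathbb{P}(A) = 1$ from Lemma \ref{LEM:st1}. Since both $\tau_k$ and $\gamma_k$ are non-decreasing in $k$, the events $A_k$ and $B_k := \{\gamma_k \geq T_0\}$ are non-decreasing in $k$ as well. Hence, for any $\omega \in A \cap B$ one can choose $j, \ell$ with $\omega \in A_j \cap B_\ell$, and setting $k_0 = \max(j,\ell)$ yields $\omega \in A_{k_0} \cap B_{k_0} = \Sigma_{k_0}$. Therefore $A \cap B \subseteq \Sigma$, and
\[
\mathbb{P}(\Sigma) \geq \mathbb{P}(A \cap B) \geq \mathbb{P}(A) + \mathbb{P}(B) - 1 = 1.
\]

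The only mildly delicate point is verifying that the parameters $(s,\sigma)$ in Lemma \ref{regularity of Psi}(iii) can be chosen so that the resulting function space matches the $X_1$-space used in the definition of $\gamma_k$, which the choice $s = 0$, $\sigma = 1$ accomplishes. Beyond that the argument is essentially set-theoretic bookkeeping exploiting monotonicity of the stopping times in $k$ to pass from the intersection of unions $A \cap B$ to the union of intersections $\Sigma$.
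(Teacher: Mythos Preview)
Your proof is correct and follows essentially the same approach as the paper: both use Lemma~\ref{regularity of Psi}\eqref{Psi-iii} to control $\gamma_k$ and Lemma~\ref{LEM:st1} to control $\tau_k$, then combine the two. Your version is slightly more streamlined---you invoke the almost sure finiteness of $\|\Psi\|_{X_1(T_0)}$ directly and use the monotone inclusion $A\cap B\subseteq\Sigma$, whereas the paper works with complements and applies Markov's inequality to show $\mathbb{P}(\{\gamma_k<T_0\})\le k^{-1}\,\mathbb{E}[\|\Psi\|_{X_1(T_0)}]\to 0$.
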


\begin{proof}

It is equivalent to show that  $ \mathbb{P} ( \Si^c ) =0$.
Therefore, we compute 
\begin{align}
\begin{aligned}
\P( \Si^c) &= \P( \bigcap_{k\geq 1} \Si_k^c ) = \lim_{k \to \infty} \P( \Si_k^c)\\
&= \lim_{k \to \infty} \P(  \{  \o \,:\,   \tau_k \wedge \g_k   <  T_0  \}  ) \\
&= \lim_{k \to \infty} \P(  \{  \o \,:\,   \tau_k    <  T_0  \}  \cup   \{  \o \,:\,   \g_k   <  T_0  \}  ) \\
&\leq  \lim_{k \to \infty} \Big( \P(  \{  \o \,:\,   \tau_k    <  T_0  \} )
+  \P(  \{  \o \,:\,   \g_k   <  T_0  \}  ) \Big).
\end{aligned}
\label{bdset1}
\end{align}

\noi
By using Lemma \ref{LEM:st1}, we see from \eqref{bdset1} that
\begin{align}
\P( \Si^c)
\leq   \P( A^c )
+  \lim_{k \to \infty}  \P(  \{  \o \,:\,   \g_k   <  T_0  \}  ),
\label{bdse2}
\end{align}

\noi
where $\P( A^c ) =0$. Now, we show 
$\lim_{k \to \infty}  \P(  \{  \o \,:\,   \g_k   <  T_0  \}  ) =0$
in the following.

From the definition of $\g_k$, 
on the set $\{\o \,:\, \g_k < T_0  \}$,
we have
\begin{align*}
k\leq \| \Psi \|_{X_1(\g_k)} < \| \Psi \|_{X_1(T_0)},
\end{align*}

\noi
which implies 
\begin{align}
\{ \o \,:\, \g_k <T_0 \} \subset 
\{ \o \,:\,  \| \Psi \|_{X_1(T_0)}  >k \}.
\label{incl}
\end{align}

\noi
By using \eqref{incl}, Lemma \ref{regularity of Psi} (iii) (with $s=0$ and $\sigma=1$), 
and Markov's inequality,
 we obtain
\begin{align*}
\P(\{  \g_k <T_0 \} )  & \leq  \P (\{   \| \Psi \|_{X_1(T_0)}  >k \})\\
&\leq \frac1k \E[  \| \Psi \|_{X_1(T_0)} ] ,
\end{align*}

\noi
which, by taking the limit as $k\to \infty$ and using \eqref{bdse2},
allows us to conclude that $ \mathbb{P} ( \Si^c ) =0$. 
This finishes our proof.

\end{proof}

\begin{proof}[Proof of Theorem \ref{thm-d-i}]

In the following, we show that for any given (deterministic) time $T_0>0$
and for any given  initial data $(u_0,u_1)\in \dot\H^1$,  
solution of \eqref{eq_SHW} exists on $[0, T_0]$, almost surely.

\medskip
\noi
{\bf Step 1:}
Perturbative argument   for any given (deterministic) time $T_0>0$.

%
%

\medskip

For  $k$ to be fixed
and then for all $\o\in \Si_k$, 
given any   target time $T_0>0$,
by its
 definition, we have
\begin{align}
\sup_{0\leq r\leq T_0  }E( u  )(r)  \leq k
\quad
\text{and}
\quad
 \| \Psi \|_{X_1(T_0)} \leq k,
 \quad
 \text{on}
 \,\, \Si_k.
 \label{gdbdd}
 \end{align}

\noi
With \eqref{gdbdd} in hand, 
 we can run the following argument.

Let $u$ be a local-in-time solution to \eqref{eq_SHW} on the interval $[0,t_0]$
(which is guaranteed by the local well-posedness theory; 
see  Proposition~\ref{local} \& Remark~\ref{RM:local}). 
We aim to show that if $t_0<T_0$, there exists a
 $\tau>0$ 
 (independent of 
 $t_0$) such that 
$u$
 can be extended to $[0,t_0+\tau]$. 

We write $v=u-\Psi$, then $v$ is the solution to the following perturbed equation: 
\begin{equation}
\label{th1 eq1}
\begin{cases}
\pa_t^2 v-\Delta v+\mathcal{N}(v+\Psi)=0\\
(v,\pa_t v)|_{t=0}=(u_0,u_1),
\end{cases}
\end{equation}

\noi
where $\mathcal{N}(u)=(|\cdot|^{-4}\ast u^2)u$.
The main idea is to view  \eqref{th1 eq1} 
as a perturbation to the (deterministic) energy-critical HNLW
\eqref{HNLW3} on $\R^d$,
that is, regard $v$ as  in Lemma \ref{perturbation}
with perturbation
\begin{align}
\begin{aligned}
e&=
\mathcal{N}(v+\Psi)-\mathcal{N}(v)\\
&=( | \cdot |^{-4} \ast  (2v\Psi +\Psi^2) ) (v+\Psi) 
+( | \cdot |^{-4} \ast  |v|^2  ) (\Psi) .
\end{aligned}
\label{eq_pb}
\end{align}

\noi
Then, the  argument follows close to \cite{CLO21, OO20, BLL23}.

Let $w$ be the global solution to the (deterministic) energy-critical HNLW
\eqref{HNLW3}    
with the initial data $(w(t_0),\pa_t w(t_0))=(v(t_0),\pa_t v( t_0))$ 
(the global existence of solutions to \eqref{HNLW3}   is guaranteed in \cite{MZZ14}).
Then, from \eqref{gdbdd} and same initial data assumption
\begin{align*}
\| (w(t_0), \dt w(t_0))\|_{\dot\H^1} & \leq  E( w ) (t_0) 
= E( v )(t_0) \\
&\leq E(u-\Psi)(t_0)
\les    E(u)(t_0) + E(\Psi )(t_0)
\les k .
\end{align*}

\noi
Hence, from 
Lemma \ref{st-bound} (for the deterministic solutions),
 the following global space-time bound holds
\begin{equation*}
\|w\|_{X([0,T_0])}\leq\|w\|_{X(\R_+)}\leq C(k).
\end{equation*}

\noi
Moreover, Lemma \ref{regularity of Psi} gives that $\Psi \in X_1(T_0)$ almost surely. 
 In particular, for any $ I \subseteq [0, T_0] $, recalling that $ q > 3 $ in the definition of $ X_1$ and utilizing H\"older's inequality in time, we can obtain the following by virtue of  \eqref{gdbdd}:
\begin{equation}
\label{eq_stv1a}
\|\Psi\|_{X( I)} \leq 
|I|^\theta   
\|\Psi\|_{X_1(I)}  \leq 
|I|^\theta   
\|\Psi\|_{X_1(T_0)}  
\leq  
|I|^\theta k,
\end{equation}

\noi
almost surely, 
for some 
 $\theta>0 $.

We divide the interval $[t_0, T_0]$ 
into $J =J(k, \eta)$ 
many subintervals $I_j = [t_{j} , t_{j+1}]$ so that
\begin{equation}
\|w\|_{X(I_j)} 
\leq \eta.
\label{eq_wlo}
\end{equation}

\noi
Here, we note
that $0< \eta\ll1$ 
 will be chosen as an absolute constant and hence dependence of other
constants on $\eta$ is not essential in the following,
and also $I_j$ is deterministically chosen  
so that on each interval we have \eqref{eq_wlo}.
Fix $\tau>0$ to be chosen later.
We write 
\begin{align*}
[t_0, t_0+\tau]&=\bigcup_{j=0}^{J'}([t_0,t_0+\tau]\cap I_j  \cap [0,   \tau_k (\o) \wedge \g_k(\o) ] )\\
&=\bigcup_{j=1}^{J'}([t_1,t_1+\tau]\cap \wt I_j) 
\end{align*}

\noi
for some $J'\leq J$, 
where $[t_0,t_0+\tau]\cap I_j\neq\emptyset$ for $1\leq j\leq J'$.

Since the nonlinear evolution of $w$ on each $\wt I_j$ is small, 
therefore, 
the linear evolution $S(t-t_j)(w(t_j),\pa_t w(t_j))$
is also small on each $I_j$. 
 Indeed, the Duhamel formula implies
\begin{equation*}
S(t-t_j)(w(t_j),\pa_t w(t_j))=
w(t)-
\int_{t_j}^t
Q (t-t') (|\cdot|^{-4}\ast|w|^2)w(t') dt'.
\end{equation*}

\noi
Then, by the Strichartz estimate \eqref{Strichartz estimate},
Hardy-Littlewood-Sobolev inequality
 \eqref{HLS-f},
  and \eqref{eq_wlo}, 
we obtain
\begin{align}
\begin{aligned}
\|S(t-t_j)(w(t_j),\pa_t w(t_j))\|_{X( \wt I_j)}
 &=
 \Big\|w(t)+\int_{t_j}^t
 Q (t-t')
 (|\cdot|^{-4}\ast|w|^2)w(t') dt'\Big\|_{X(\wt  I_j)}\\
 &\leq
 \|w(t)\|_{X( \wt  I_j)}+C\|(|\cdot|^{-4}\ast|w|^2)w\|_{L_{\wt  I_j}^1L_x^2}\\
&\leq
\|w(t)\|_{X( \wt  I_j)}+C\|w\|_{X( \wt  I_j)}^3\leq 2\eta
\end{aligned}
\label{th1 est6}
\end{align}

\noi
for $j=0,1,\cdots,J'$, provided that $\eta>0$ is sufficiently small.

Now, we estimate $v$ on the first interval $  \wt  I_0=[t_0,t_1]  \cap [0,   \tau_k \wedge \g_k] $. 
We may assume that $\wt   I_0 \subseteq [t_0, t_0+\tau]$, if not,
we define a new time interval $[t_0, t_0+\tau] \cap \wt I_0 $.
By assumption
$(w(t_0),\pa_t w(t_0))=(v(t_0),\pa_t v(t_0))$ and 
\eqref{th1 est6}, 
we have
\begin{align*}
\begin{aligned}
\|S(t- t_0)(v (t_0),\pa_t v(t_0))\|_{X( \wt  I_0)}&=
\|S(t-t_0)(w(t_0),\pa_t w(t_0))\|_{X(\wt  I_0)}
\leq 2\eta.
\end{aligned}
\end{align*}

\noi
Then, by the local theory (Proposition \ref{local}), we have
\begin{equation*}
\|v\|_{X(\wt  I_0)}\leq 4\eta,
\end{equation*}

\noi
as long as  $\tau = \tau (\eta , \theta)  > 0$ is sufficiently small so that
\begin{equation*}
\|\Psi\|_{X( \wt  I_0)}  \leq 
|\wt  I_0|^\theta   
\|\Psi\|_{X_1(T_0)}  
\leq  
|\tau |^\theta  k \leq \eta.
\end{equation*}

Next, we estimate the error term. 
By using \eqref{eq_pb}, \eqref{Strichartz estimate}, and \eqref{eq_stv1a},
we repeat as in the proof of Proposition \ref{local} to obtain
\begin{equation}
\|e\|_{L_{\wt  I_0}^1L_x^2}
\leq  k (\eta + \tau^\theta )^{\al}
 \tau^\theta \leq k   \tau^\theta 
 \label{eq_er1}
\end{equation}

\noi
for some $\al=\al(d)$ depends only on the dimension,
and for any small $\eta , \tau > 0$.
Given $\eps > 0$, we can choose $\tau  = \tau  (\eps, \theta) > 0$ sufficiently small so that
\begin{equation*}
\|e\|_{L_{\wt  I_0}^1L_x^2}
\leq  \eps.
\end{equation*}

\noi
In particular, for $\eps < \wt \eps$ with $\wt \eps = \wt \eps(k) > 0$
dictated by Lemma \ref{perturbation}, the condition \eqref{p-condition2} is
satisfied on $I_0$. 
Hence, by the perturbation lemma (Lemma \ref{perturbation}), we obtain
\begin{equation*}
\|(v -w ,\pa_t v -\pa_t w )\|_{L^\infty_{ \wt  I_0} \dot\H^1}
+\|v-w\|_{L_{\wt  I_0}^q L_x^r} 
\leq C_1(k)\e  
\end{equation*}

\noi
for all $\dot H^1$-wave admissible pairs $(q,r)$.
In particular, we have
\begin{equation}
\|(v(t_1)-w(t_1),\pa_t v(t_1)-\pa_t w(t_1))\|_{\dot\H^1}
\leq C_1(k) \e.
\label{ent0}
\end{equation}

\noi
This means that at the right end point of $I_0$,
the energy remains finite, 
and it will be the initial energy for the next iteration step.

We now move onto the second interval $\wt  I_1=[t_1,t_2] \cap [0, \tau_k \wedge \g_k]$ and now $t_1$ is the initial time for the following computation.
 By \eqref{th1 est6} and \eqref{Strichartz estimate}  with \eqref{ent0}, we have
 \begin{align}
\begin{aligned}
\|S(t-t_1) (v (t_1),\pa_t v(t_1))\|_{X( \wt  I_1)}  &\leq 
\|S(t-t_1)  (w (t_1),\pa_t w(t_1))\|_{X(\wt  I_1)} \\
&\quad +
\|S(t-t_1) (w(t_1) - v(t_1), \pa_t w(t_1) -\pa_t v(t_1) )\|_{X(\wt  I_1)}\\
&\leq 2\eta + C_0 \cdot C_1(k) \e \leq 3\eta,
\end{aligned}
\label{li_2}
\end{align}

\noi
where $C_0$ is a pure constant from applying \eqref{Strichartz estimate}, and 
we have chosen  $\eps = \eps (k, \eta) =\eps(k) > 0$ sufficiently small.
Proceeding as before, it follows from local theory  (Proposition \ref{local})
 with \eqref{li_2} that
\begin{equation*}
\|v\|_{X(\wt  I_1)}\leq 6\eta,
\end{equation*}

\noi
as long as $\tau > 0$ is sufficiently small so that
\begin{equation*}
\|\Psi\|_{X(\wt   I_1)}  
 \leq 
|\wt  I_1|^\theta   
\|\Psi\|_{X_1(T_0)} 
\leq  
|\tau |^\theta k \leq \eta.
\end{equation*}

\noi
By repeating the
computation for the error term
in \eqref{eq_er1} with \eqref{eq_stv1a}, we have
\begin{equation*}
\|e\|_{L_{\wt  I_1}^1L_x^2}
\leq  k \tau^\theta\leq  \eps,
\end{equation*}

\noi
by choosing $\tau = \tau (\eps, \theta) > 0$ sufficiently small. 
Hence, by the  
perturbation lemma (Lemma~\ref{perturbation}) applied to the second interval $I_1$,
 we obtain
\begin{equation*}
\|(v -w ,\pa_t v -\pa_t w )\|_{L^\infty_{\wt  I_1} \dot\H^1}
+\|v-w\|_{L_{\wt  I_1}^q L_x^r} 
\leq C_1(k) ( C_1(k) + 1)  \e,
\end{equation*}

\noi
provided that $\tau  = \tau (\eps, \theta) > 0$ 
is chosen sufficiently small and that $(C_1(k) + 1)\eps < \wt \eps$. 
In particular, we have
\begin{equation*}
\|(v(t_2)-w(t_2),\pa_t v(t_2)-\pa_t w(t_2))\|_{\dot\H^1}
\leq C_1(k) ( C_1(k) + 1)  \e =: C_2(k) \eps.
\end{equation*}

For $j\geq 2$, 
define $C_j (k) $ recursively by setting
\begin{equation*}
C_j (k) = C_1(k) (C_{j-1}(k) +1 ).
\end{equation*}

\noi
Then, proceeding  above argument inductively, we obtain
\begin{equation*}
\|(v(t_j)-w(t_j),\pa_t v(t_j)-\pa_t w(t_j))\|_{\dot\H^1}
\leq  C_j(k) \eps
\end{equation*}

\noi
for all $0 \leq  j \leq  J'$, 
as long as $\eps = \eps (k, \eta , J) > 0$ 
is sufficiently small such that
\begin{itemize}
\item
$C_0 \cdot  C_j (k) \eps  \leq  \eta $
(here, $C_0$ is the constant from the Strichartz estimate in \eqref{li_2}),

\item
$(C_j (k) + 1) \eps < \eps_0$ so that 
perturbation lemma (Lemma \ref{perturbation}) applies,

\end{itemize}

\noi
for $j = 1, \cdots,J'$. 
Recalling that $J' + 1 \leq J = J(k, \eta)$, 
we see that this can be achieved by
choosing small $\eta > 0$, $\eps = \eps(R, \eta) = \eps(k) > 0$,
 and $\tau = \tau (\eps, \theta) = \tau (k, \theta) > 0$ sufficiently small. 
 This guarantees existence of a (unique) solution $v$ to  \eqref{th1 eq1} on 
 $[t_0, t_0 + \tau]$. 
 Lastly,
noting that $\tau > 0$ is independent of $t_0 \in [ 0, T_0 )$, 
we conclude existence of the solution $v$
to  \eqref{th1 eq1}  on the entire closed  interval $ [0,   \tau_k (\o) \wedge \g_k (\o)]
\cap [0, T_0 ]$ for all $\o \in \Si_k$.

Note the above argument works for any $k$.
Thus, for any given $T_0>0$, we have
\begin{align}
\bigcup_{k} \Si_k \subset \{ v \,\, \text{exists till}\,\, T_0  \}.
\label{eq_clu}
\end{align}

\noi
By Lemma \ref{LEM:st2} and \eqref{eq_clu}, we obtain
\[
\P ( \{ v \,\, \text{exists till}\,\, T_0  \})=1.
\]

\medskip
\noi
{\bf Step 2:}
Conclusion.

\medskip
Then, in view of the perturbation argument in step 1, 
if $T^*(\o, u_0, u_1, \phi)$ happens before $\tau_k \wedge \g_k$, then 
we can always extend solution up to $T^*$ so that 
\[
\| v \|_{X(T^*)} \leq C(k,\eta)<\infty, 
\]

\noi
which is a contradiction with the assumption that 
$T^*$ is the maximal existence time and Proposition \ref{blow-up}.
This proves that $T^* \ge \tau_k \wedge \g_k$, almost surely.

We  set 
\[
\Si_{T_0}=\{  \tau_\infty \wedge \g_\infty  \geq T_0   \} ,
\]

 \noi
 where $\g_\infty = \lim_{k\to \infty} \g_k$.
In Lemma \ref{LEM:st2}, 
 we have shown for any $T_0>0$, $\P(\Si)=1$.
 Moreover, 
 $\Si \subset \Si_{T_0}$, thus,
$ \P(\Si_{T_0}) =1$.
 Now, we relabel the target time to be $T_j=2^j$ and 
 we have $\P(\Si_{T_j}) =1$ for $j \geq 1$.
 Let  $\Si_{\infty} = \bigcap_{j\geq 1} \Si_{T_j}$,
 then $\mathbb P(\Si_{\infty}) =1$.

\end{proof}

\begin{remark}
\rm

In order to establish global-in-time existence, 
we show that the solution exists up to any given  
{\it deterministic} target time $T_0 > 0$.
 However, 
 we cannot work with such a fixed target time directly. 
 Instead, it is necessary to introduce a stopping time adapted to the {\it filtration generated by the noise} (since the initial data is deterministic), 
 and work on the intersection of the given target time and the stopping time.
%
%

\end{remark}

\subsection{Energy bound with random initial data}

In this subsection, we show the uniform probabilistic a priori energy bound on smooth approximating solutions.

Given $(u_0,u_1)\in\H^s(\R^5)$ with $\frac{1}{2}<s<1$.
Let $N\geq 1$ dyadic,
we define $u_{j,N}^\omega$ by
\begin{equation*}
u_{j,N}^\omega:= P_{\leq N}u_j^\omega
=
\sum_{n\in\mathbb{Z}^5}g_{n,j}(\omega)P_{\leq N}\psi(D-n)u_j
\quad
\text{for }
 j=0,1.
\end{equation*}

\noi
It is clear that  $(u_{0,N}^\omega, u_{1,N}^\omega)\in \H^\infty(\R^5)$.
Assume 
$\phi\in\HS(L^2;H^1)$.
Let $u_N=u_N^\o$ be the smooth global-in-time solution to
 \eqref{eq_SHW}
with (random) initial data 
$
 (u_N,\pa_t u_N)|_{t=0}=(u_{0,N}^\omega,u_{1,N}^\omega)
$ 
and $\phi$ replaced by $P_{\leq N}\phi$.
We  
follow the discussion as in the introduction,
decompose $u_N$ into
\begin{equation}
\label{z_N}
u_N=v_N+\wt \Psi_N,
\end{equation}

\noi
where $\wt \Psi_N=S(t)(u_{0,N}^\om,u_{1,N}^\om) +\Psi_N$, and $v_N=v_N^\o$ is the residual term.
Here, $\Psi_N$ is defined by \eqref{Psi'} with $\phi$ replaced by $P_{\leq N}\phi$.
The probabilistic Strichartz estimates (Lemma \ref{P-est-S*}) 
and the regularity properties of the convolution (Lemma \ref{regularity of Psi}) 
ensure that the above decomposition \eqref{z_N} holds true.
More precisely, we have \eqref{regpsi2}.
The residual term and $v_N$ is the smooth global solution to \eqref{eq_SHW3}
with
$(v_N,\pa_tv_N)|_{t=0}=(0,0)$.
For the convenience,
we also denote the (random) linear solution 
\begin{align}
\label{def t-z_N}
z_N=z_N^\o=S(t)(u_{0,N}^\om,u_{1,N}^\om)
\quad
\text{and}
\quad
\wt z_N=\wt z_N^\omega:=\wt S(t)(u_{0,N}^\omega,u_{1,N}^\omega),
\end{align}

\noi
where $\wt S(t)$ is defined in \eqref{t-S(t)}. 
In view of \eqref{S(t) t-S(t)},
we have $\pa_t z_N=\langle\nabla\rangle\wt z_N$. 

In terms of the above conventions, we establish the following  energy bound of $E(v_N)$ 
for solutions $v_N$ of equation \eqref{eq_SHW3}, 
on the condition that the linear solution $z_N$ meets certain conditions.

\begin{proposition}
\label{D-bound}
Let $ \frac{1}{2}<s<1$ and $N\geq 1$ dyadic. 
Given $T>0$ and assume that exists a constant $K>0$ such that
\begin{align}
\label{zN t-zN bound}
\|\wt\Psi_N\|_{Z(T)}<K,\quad\|\wt z_N\|_{Y(T)}<K,
\quad
\text{and}
\quad\|\pa_t\Psi_N\|_{L_t^{10}L_x^{\frac{10}{3}}([0,T]\times\R^5)}<K,
\end{align}

\noi
where 
$Y(T)$ and $Z(T)$ are defined in
\eqref{def_WY}.
Then, there exists a positive constant $C(K,T)$ 
which is dependent on both $K$ and $T$
such that
\begin{equation}
\label{goal1}
E(v_N(t))\leq C(K,T).
\end{equation}

\noi
In particular, we have
\begin{equation}
\label{goal2}
\sup_{t\in[0,T]}\|(v_N(t),\pa_tv_N(t))\|_{\H^1(\R^5)}^2 \leq C(K,T)
\end{equation}
for the solution $v_N$ to \eqref{eq_SHW3}.
\end{proposition}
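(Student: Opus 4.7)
The plan is to differentiate $E(v_N)(t)$ in time, reduce the problem to estimating the cross terms between $v_N$ and $\wt\Psi_N$ coming from the nonlinearity, and then close a Gronwall-type inequality after treating the genuinely bad (quadratic-in-$v_N$) terms via the time integration by parts trick of \cite{OP16}. Since $v_N$ solves $\pa_t^2 v_N-\Delta v_N+\mathcal{N}(v_N+\wt\Psi_N)=0$ with zero initial data, a direct computation yields
\[
\frac{d}{dt}E(v_N)(t)=\int_{\R^5}\pa_t v_N\cdot\bigl[\mathcal{N}(v_N)-\mathcal{N}(v_N+\wt\Psi_N)\bigr]dx,
\]
and expanding the right-hand side produces five terms which I group by their degree in $v_N$. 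The cubic-in-$\wt\Psi_N$ term and the linear-in-$v_N$ (i.e., quadratic-in-$\wt\Psi_N$) ones are estimated directly using H\"older's inequality, the Hardy-Littlewood-Sobolev inequality (Lemma \ref{HLS}), and the $Y(T)$, $Z(T)$ norms of $\wt\Psi_N$ from \eqref{zN t-zN bound}; each of them is bounded by $g(t)(1+E(v_N))$ for some $g\in L^1([0,T])$ depending only on $K$ and $T$.

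The core difficulty is controlling the two quadratic-in-$v_N$, linear-in-$\wt\Psi_N$ contributions
\[
\mathcal{J}(t):=2\int\pa_t v_N\,(|\cdot|^{-4}\ast v_N\wt\Psi_N)\,v_N\,dx+\int\pa_t v_N\,(|\cdot|^{-4}\ast v_N^2)\,\wt\Psi_N\,dx,
\]
for which a naive Cauchy-Schwarz bound produces an $E(v_N)^{3/2}$ factor, out of Gronwall's reach. Exploiting the symmetry of the Hartree kernel, I would verify the identity $\mathcal{J}(t)=\frac{d}{dt}F(t)-\mathcal{R}(t)$ with
\[
F(t):=\iint\frac{v_N(t,x)\,v_N(t,y)^2\,\wt\Psi_N(t,x)}{|x-y|^4}dx\,dy,\qquad
\mathcal{R}(t):=\int v_N\,(|\cdot|^{-4}\ast v_N^2)\,\pa_t\wt\Psi_N\,dx.
\]
Integration in $t$ thus shifts a time derivative off $v_N$ onto $\pa_t\wt\Psi_N=\langle\nabla\rangle\wt z_N+\pa_t\Psi_N$, whose $L^{10}_tL^{10/3}_x$ and $Y(T)$ norms are controlled by \eqref{zN t-zN bound}.

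The crucial remaining step is to dominate $F(t)$ pointwise in $t$, and $\int_0^T|\mathcal R(t)|\,dt$ as a whole, by the Hartree potential energy through Lemma \ref{Riesz-Cha}. Specifically, \eqref{visan lem} with $d=5$ gives $\||\nabla|^{-1/4}f\|_{L^4}^2\les\||\nabla|^{-1/2}|f|^2\|_{L^2}$, and combined with H\"older and Sobolev embedding this yields $|F(t)|\leq\tfrac{1}{2}E(v_N)(t)+C(K)$ together with $|\mathcal R(t)|\les g(t)\bigl(1+E(v_N)(t)\bigr)$; the matching of exponents against the norms in \eqref{def_WY} is exactly where the restriction $d=5$ is forced. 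Combining all contributions produces the differential inequality
\[
E(v_N)(t)\les 1+F(t)-F(0)+\int_0^t g(s)\bigl(1+E(v_N)(s)\bigr)ds,
\]
and absorbing the $F(t)$ term on the left by the Hartree bound and applying Gronwall's inequality proves \eqref{goal1}. For \eqref{goal2}, the estimate $\|\pa_t v_N\|_{L^2}^2\leq 2E(v_N)$ is immediate, while $v_N(0)=0$ together with the fundamental theorem of calculus in time gives $\|v_N(t)\|_{L^2}\leq T\sqrt{2E(v_N)}$. The main obstacle in the plan is the coupled IBP-plus-Hartree-estimate step: getting the identity $\mathcal{J}=\frac{d}{dt}F-\mathcal R$ right and matching the resulting norms of $F$ and $\mathcal R$ against the available probabilistic Strichartz spaces $Y(T),Z(T)$ (which is exactly the mechanism that restricts us to $d=5$).
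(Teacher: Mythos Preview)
Your overall strategy coincides with the paper's: differentiate $E(v_N)$, isolate the quadratic-in-$v_N$ contribution $\mathcal J$, perform the Oh--Pocovnicu time integration by parts $\mathcal J=\frac{d}{dt}F-\mathcal R$, bound the boundary term $F$ via the Hartree-energy estimate $\|v_N\|_{L^{10/3}_x}\les E(v_N)^{3/10}$ (this is exactly how the paper uses Lemma~\ref{Riesz-Cha}), and close by Gronwall. The identity for $\mathcal J$ and the treatment of $F(t)$ and of the ``easy'' terms $\1(t)$ are correct.

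The genuine gap is in your estimate of $\mathcal R(t)=\int v_N\,(|\cdot|^{-4}\ast v_N^2)\,\pa_t\wt\Psi_N\,dx$. You split $\pa_t\wt\Psi_N=\langle\nabla\rangle\wt z_N+\pa_t\Psi_N$ and assert that ``H\"older and Sobolev embedding'' against the $Y(T)$ and $L^{10}_tL^{10/3}_x$ norms suffices. This is fine for $\pa_t\Psi_N$, but it fails for $\langle\nabla\rangle\wt z_N$: the $Y(T)$ norm in \eqref{def_WY} only controls $\langle\nabla\rangle^{s-\delta}\wt z_N$ in $L^\infty_tL^5_x$ with $s-\delta<1$, so a full derivative of $\wt z_N$ is \emph{not} available in any Lebesgue space, and no Sobolev embedding can supply the missing $1-s+\delta$ derivatives. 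What the paper actually does here is a Littlewood--Paley argument: at dyadic frequency $M\ge2$ one writes $M=M^{s-\delta}\cdot M^{1-s+\delta}$, places $M^{s-\delta}$ on $P_M\wt z_N$ to produce $\|\langle\nabla\rangle^{s-\delta}\wt z_N\|_{L^5_x}$, and transfers the remaining $M^{1-s+\delta}$ onto the highest-frequency factor $P_{M_1}v_N$ (with $M_1\gtrsim M$) via the interpolation $\|P_{M_1}v_N\|_{L^{5/2}}\le\|P_{M_1}v_N\|_{L^2}^{1/2}\|P_{M_1}v_N\|_{L^{10/3}}^{1/2}$. This derivative transfer requires $2(1-s+\delta)\le1$, i.e.\ $s>\tfrac12+\delta$, and is precisely the mechanism behind the hypothesis $s>\tfrac12$ in the proposition --- a restriction your sketch does not account for. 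Without this step the claimed bound $|\mathcal R(t)|\les g(t)(1+E(v_N)(t))$ is unjustified.
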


\begin{proof}
Taking the time derivative of the energy, using \eqref{eq_SHW3}
and integrating with respect to time, 
we obtain 
\begin{equation}
\label{E=I+J}
\begin{split}
E(v_N(t))
&=-\int_0^t\int_{\R^5}\pa_{t'} v_N[(|\cdot|^{-4}\ast |\wt\Psi_N|^2)(v_N+\wt\Psi_N)+(|\cdot|^{-4}\ast (2v_N\wt\Psi_N))\wt\Psi_N]dxdt' \\
&\quad -\int_0^t\int_{\R^5}\pa_{t'} v_N[(|\cdot|^{-4}\ast |v_N|^2)\wt\Psi_N+(|\cdot|^{-4}\ast (2v_N\wt\Psi_N))v_N]dxdt'\\
&=: \1(t)+J(t). 
\end{split}
\end{equation}

\noi
In what follows,
we will omit the subscript of $N$ from our notation  for simplicity.

First, we establish a key estimate based on the Hartree potential energy. 
From interpolation and Lemma \ref{Riesz-Cha}, we have
\begin{align}
\label{10/3-norm}
\begin{aligned}
\|v\|_{L_x^{\frac{10}{3}}}
\lesssim
\||\nabla|v\|_{L_x^2}^{\frac{1}{5}}\||\nabla|^{-\frac{1}{4}}v\|_{L_x^4}^{\frac{4}{5}}
\lesssim 
\||\nabla|v\|_{L_x^2}^{\frac{1}{5}}\||\nabla|^{-\frac{1}{2}}|v|^2\|_{L_x^2}^{\frac{2}{5}}
\lesssim E(v(t))^{\frac{3}{10}}.
\end{aligned}
\end{align}

\noi
For $\1(t)$, we apply H\"{o}lder's, 
 Hardy-Littlewood-Sobolev inequalities, and \eqref{10/3-norm}, to deduce
\begin{equation}
\label{I}
\begin{split}
|\1(t)|
%
&\lesssim
\int_0^t \big(E(v(t'))+\|\wt\Psi\|^6_{L_x^{\frac{30}{7}}}+E(v(t'))\|\wt\Psi\|_{L_x^5}^2\big) d t'.
\end{split}
\end{equation}
For $J(t)$, we need to employ the integration by parts trick of \cite{OP16}:
\begin{equation*}
\begin{split}
J(t)
= -\int_{\R^5}v^2(|\cdot|^{-4}\ast (v\wt\Psi)) dx+\int_{\R^5}\int_0^t v^2(|\cdot|^{-4}\ast (v\pa_{t'} \wt\Psi)) dt' dx
=: J_1(t)+J_2(t).
\end{split}
\end{equation*}

\noi
Now, we consider $J_1(t)$.
\noi
By fixing a constant $0<\e_0\ll 1$ sufficiently small and to be chosen later,
using \eqref{10/3-norm},
Hardy-Littlewood-Sobolev \eqref{HLS-f},
 H\"{o}lder's,
and Young's inequalities,
we can obtain 
\begin{equation}
\label{J1}
\begin{split}
|J_1(t)|
\lesssim
\|v\|_{L_x^{\frac{10}{3}}}^3\|\wt\Psi\|_{L_x^{\frac{10}{3}}}
\lesssim E(v(t))^{\frac{9}{10}}\|\wt\Psi\|_{L_x^{\frac{10}{3}}}
\lesssim \frac{9}{10}\e_0 E(v(t))+\frac{1}{10}\e_0^{-9}\|\wt\Psi\|_{L_x^{\frac{10}{3}}}^{10}.
\end{split}   
\end{equation}

\noi
For the second term $J_2(t)$, 
from \eqref{def t-z_N}, we have 
\begin{equation}
\label{J_2}
\begin{split}
J_2(t)
&=\int_{\R^5}\int_0^t v^2(|\cdot|^{-4}\ast (v\langle\nabla\rangle\wt z)) dt' dx
+\int_{\R^5}\int_0^t v^2(|\cdot|^{-4}\ast (v\pa_{t'} \Psi)) dt' dx\\
&=: J_{2}^{(1)}(t)+J_2^{(2)}(t).
\end{split}
\end{equation}

\noi
For the first term $J_{2}^{(1)}(t)$, we define $\cI(t)$ by
\begin{align*}
\mathcal{I}(t)
=\int_{\R^5}\langle\nabla\rangle\wt zv(|\cdot|^{-4}\ast v^2) dx
=\sum_{k=-1}^1\sum_{\substack{M\geq 1\,  \text{dyadic}}}M\int_{\R^5}P_{2^kM}\wt z \, P_M[v(|\cdot|^{-4}\ast v^2)] dx   
\end{align*}
with the understanding that $P_{2^{-1}}=0$ and 
$J_2^{(1)}(t)=\int_0^t\mathcal{I}(t')  d t'$.

{\bf Case 1}: $M=1$. 
Similar to the proof of \eqref{J1},  we have
\begin{equation}
\label{J_21 M=1}
\begin{split}
|J_2^{(1)}(t)|
&=
\Big|\int_0^t\int_{\R^5}
\big(P_1\wt z(t')P_1[v(|\cdot|^{-4}\ast v^2)]+P_2\wt z(t')P_1[v(|\cdot|^{-4}\ast v^2)] \big) dx dt'\Big|\\
&\lesssim
\int_0^t\frac{9}{10}\e_0 E(v(t')) dt'+\int_0^t\frac{1}{10}\e_0^{-9}\|\wt z\|_{L_x^{\frac{10}{3}}(\R^5)}^{10} dt'.
\end{split}
\end{equation}

{\bf Case 2}: 
$M\geq 2$. Using the (inhomogeneous) Littlewood-Paley again, we have 
\begin{equation*}
v(|\cdot|^{-4}\ast v^2)=\sum_{\substack{M_j\geq 1\,   \text{dyadic}}}P_{M_1}v(|\cdot|^{-4}\ast (P_{M_2}vP_{M_3}v)).
\end{equation*}

\noi
The proofs of $M_1\geq M_2\geq M_3$ 
and $M_1\leq M_2\leq M_3$ are similar, 
so we only consider $M_1\geq M_2\geq M_3$.
Note that $P_M[v(|\cdot|^{-4}\ast v^2)]=0$ unless $M_1\gtrsim M$. With $M_1\gtrsim M$, we have
\begin{equation*}
\mathcal{I}(t)\sim\sum_{k=-1}^1\sum_{M\geq 2}\sum_{\substack{M_1,M_2,M_3\\   M_1\gtrsim M}} M\int_{\R^5}P_{2^kM}\wt z(t)P_M[P_{M_1}v(|\cdot|^{-4}\ast (P_{M_2}vP_{M_3}v))]dx.
\end{equation*}
Using H\"{o}lder's inequality, we derive
\begin{equation*}
\begin{split}
|\mathcal{I}(t)|
&\lesssim
\sum_{k=-1}^1\sum_{M\geq 2}\sum_{\substack{M_1,M_2,M_3\\   M_1\gtrsim M}}
\big\|\langle\nabla\rangle^{s-\delta}P_{2^kM}\wt z(t)\big\|_{L_x^5}
M_1^{1-s+\delta} \big\|P_{M_1}v(|\cdot|^{-4}\ast (P_{M_2}vP_{M_3}v))\big\|_{L_x^{\frac{5}{4}}} \\
&\lesssim
\sup_{M_1,M_2,M_3}
\big\| \langle\nabla\rangle^{s-\delta}\wt z(t)\big\|_{L_x^5}
M_1^{1-s+\delta+}
\big\|P_{M_1}v(|\cdot|^{-4}\ast (P_{M_2}vP_{M_3}v))\big\|_{L_x^{\frac{5}{4}}}.
\end{split}
\end{equation*}
By H\"{o}lder's,
Hardy-Littlewood-Sobolev inequalities, interpolation and \eqref{10/3-norm},
we see
\begin{equation*}
\begin{split}
M_1^{1-s+\delta+} \|P_{M_1}v(|\cdot|^{-4}\ast (P_{M_2}vP_{M_3}v))\|_{L_x^{\frac{5}{4}}}
&\lesssim
M_1^{1-s+\delta+}(\|P_{M_1}v\|^{\frac{1}{2}}_{L_x^2}\|P_{M_1}v\|^{\frac{1}{2}}_{L_x^{\frac{10}{3}}})\|P_{M_2}vP_{M_3}v\|_{L_x^{\frac{5}{3}}}\\
&\lesssim
\|M_1^{2(1-s+\delta+)}P_{M_1}v\|^{\frac{1}{2}}_{L_x^2}\|v\|^{\frac{5}{2}}_{L_x^{\frac{10}{3}}}\\
&\lesssim
E(v(t))^{\frac{1}{4}}E(v(t))^{\frac{3}{4}},
\end{split}
\end{equation*}

\noi
where the last inequality follows from Bernstein's inequality as long as $2(1-s+\delta+)\leq 1$, i.e. $s>\frac{1}{2}+\delta.$ Hence, we have
\begin{align}
\label{J21 M>2 1}
\begin{split}
|\mathcal{I}(t)|\les  
\|\langle\nabla\rangle^{s-\delta}\wt z(t)\|_{L_x^5}E(v(t)).
\end{split}
\end{align}


\noi
Therefore, 
we conclude $J^{(1)}_2(t)$ by
using \eqref{J_21 M=1}
and \eqref{J21 M>2 1},
such that
\begin{equation}
\label{J_21}
\begin{split}
|J^{(1)}_2(t)|
&\lesssim
\int_0^t\frac{9}{10}\e_0 E(v(t')) dt'+\int_0^t\frac{1}{10}\e_0^{-9}\|\wt z\|_{L_x^{\frac{10}{3}}}^{10} dt'
+\int_0^t\|\langle\nabla\rangle^{s-\delta}\wt z(t')\|_{L_x^5}E(v(t')) dt'.
\end{split}
\end{equation}

\noi
As for $J_2^{(2)}(t)$ in \eqref{J_2}, 
by applying a similar  argument of
\eqref{J_21 M=1}, we obtain
\begin{equation}
\label{J_22}
|J_2^{(2)}(t)|
\lesssim 
\int_0^t\frac{9}{10}\e_0 E(v(t'))dt'
+\int_0^t
\frac{1}{10}\e_0^{-9}\|\pa_{t'}\Psi\|_{L_x^{\frac{10}{3}}}^{10}dt'.
\end{equation}

\noi
Combining the estimates \eqref{E=I+J},  \eqref{I}, \eqref{J1}, \eqref{J_2}, \eqref{J_21}, and \eqref{J_22}  together, 
choosing $\e_0$ sufficiently small such that $\frac{9}{10}\e_0 E(v(t))\leq\frac{1}{2}E(v(t))$,
we yield
\begin{equation*}
\begin{split}
E(v(t))
\les K+(1+ K)\int_0^t E(v(t'))  dt'.
\end{split}
\end{equation*}

\noi
Therefore,
 Gronwall's inequality implies 
  the energy bound \eqref{goal1} for all solutions $v_N$ to \eqref{eq_SHW3}.
As for \eqref{goal2}, it follows by, for $t\in[0,T]$
\begin{equation*}
\|(v_N(t),\partial_t v_N(t))\|_{\dot \H^1}
^2
\leq C(K,T),
\quad
\|v_N(t)\|_{L_x^2}\leq C(K,T).
\end{equation*}
This completes the proof of Proposition \ref{D-bound}.
\end{proof}

Proposition \ref{D-bound} is the main difficulty in the proof of global-in-time existence.
Subsequently, we give some remarks in the following to contrast the disparities between NLW and HNLW.
Additionally, we also identify the reason of dimension restriction on $d=5$.

\begin{remark}
\label{R-NLW-vN}
\rm 

We consider
the perturbed  energy-critical NLW
\begin{equation*}
\begin{cases}
\pa_t^2 v_N-\Delta v_N+|v_N+z_N|^{\frac{4}{3}}(v_N+z_N)=0\\
(v_N,\pa_tv_N)|_{t=0}=(0,0),
\end{cases}
\quad (t,x)\in\R\times\R^5,
\end{equation*}

\noi
which is associated with 
the energy $E_{{\rm NLW}}(v_N)$ by
\begin{equation*}
E_{{\rm NLW}}(v_N):=\frac{1}{2}\int_{\R^5}(|\nabla v_N|^2+|\pa_t v_N|^2)dx
+\frac{3}{10}\int_{\R^5}|v_N|^{\frac{10}{3}}dx.
\end{equation*}

\noi
By H\"{o}lder's inequality, 
the fact that 
\begin{align}
\label{ENGctr}
\|v_N\|_{L_x^{\frac{10}{3}}}^{\frac{4}{3}}
\les E_{{\rm NLW}}(v_N)^{\frac{2}{5}},
\end{align}

\noi
and Gronwall inequality,
it is not difficult to show that 
\begin{equation*}
\sup_{t\in[0,T]}\|(v_N(t),\pa_tv_N(t))\|_{\H^1}\leq E_{{\rm NLW}}(v_N)\leq C.
\end{equation*}

\noi
Therefore,
it is sufficient to assume that the perturbed term satisfies 
\begin{equation*}
z_N\in L_t^{\frac{7}{3}}L_x^{\frac{14}{3}}([0,T]\times\R^5)\cap L_t^1L_x^{10}([0,T]\times\R^5),
\end{equation*}

\noi
then
 the Gronwall-type argument developed by Burq-Tzvetkov \cite[Proposition 2.2]{BT14}
 is  enough to establish the energy bound
 to energy-critical NLW  on $\R^5$.
 
%
%

%
%
\end{remark}

\begin{remark}\label{difficult}
\rm

In the case of Hartree nonlinearity,
the energy alone is insufficient to complete the proof of Proposition \ref{D-bound}. 
Namely, a direct application of
the Gronwall-type argument of \cite[Proposition 2.2]{BT14} is not enough to 
establish the energy bound. 
The potential energy of $E(v_N(t))$ 
cannot be used to control $\|v_N\|_{L_x^{10/3}}$, 
as we saw in NLW case of \eqref{ENGctr},
 where
\begin{equation*}
E(v_N(t))
:= 
\frac{1}{2}\int_{\R^5}(|\nabla v_N(x)|^2+|\pa_t v_N(x)|^2)dx+\frac{1}{4}\int_{\R^5}\int_{\R^5}\frac{|v_N(x)|^2|v_N(y)|^2}{|x-y|^4}dxdy.
\end{equation*}

\noi
This issue  is similar to the energy-critical NLW on $\R^3$;
see \cite[Remark 5.1]{OP17} and 
it was resolved in \cite{OP16}.
Therefore,
the above proof follows the idea of 
\cite[Proposition 4.1]{OP16}.
We show that 
by assuming some appropriate regularity properties of the perturbed terms,
we can control the energy $E(v_N(t))$, 
uniformly in $N$.
Later on we show the assumption 
\eqref{zN t-zN bound} holds true, almost surely;
see Proposition \ref{R-W,Y}.

\end{remark}

\begin{remark}\label{d=5}
\rm

For $d>5$, the above computation fails to establish the upper bound of $E(v_N,\dt v_N)$. 
%
To see this, we consider the estimate for the term $\mathcal{I}(t)$ in {\bf Case 2}. 
First, using~\eqref{Riesz},  \eqref{visan lem} and the interpolation, we have
\begin{equation}
\label{v E(v)-d}
\|v\|_{L_x^{\frac{2d}{d-2}}}\lesssim E(v(t))^{\frac{d-2}{2d}}.
\end{equation}
By H\"{o}lder's inequality, there exist $1<m_1,m_2<\infty$ satisfy $\frac{1}{m_1}+\frac{1}{m_2}=1$ such that
\begin{equation*}
\begin{split}
|\mathcal{I}(t)|\lesssim
\|\langle\nabla\rangle^{s-\delta}\wt z(t)\|_{L_x^{m_1}}
M_1^{1-s+\delta+}\|P_{M_1}v(|\cdot|^{-4}\ast (P_{M_2}vP_{M_3}v))\|_{L_x^{m_2}}.
\end{split}
\end{equation*}

\noi
Then, by H\"{o}lder's,
Hardy-Littlewood-Sobolev inequalities,
\eqref{v E(v)-d} 
and choosing $0\leq\theta<1$ satisfies 
$\theta<s-\delta$,
we obtain
\begin{equation*}
\|M_1^{\frac{1-s+\delta+}{1-\theta}}P_{M_1}v\|_{L_x^2}\leq E(v(t))^{\frac{1}{2}}, 
\end{equation*}
where $m_2=\frac{2d}{3d-2\theta-4}$.
Hence, we would have
\begin{equation*}
\begin{split}
M_1^{1-s+\delta+}\|P_{M_1}v(|\cdot|^{-4}\ast (P_{M_2}vP_{M_3}v))\|_{L_x^{m_2}}  
&\leq
E(v(t))^{\frac{1-\theta}{2}}E(v(t))^{\frac{(2+\theta)(d-2)}{2d}}.
\end{split}
\end{equation*}

\noi
However, to apply 
Gronwall's inequality, the condition
 $\frac{1-\theta}{2}+\frac{(2+\theta)(d-2)}{2d}\leq 1$ must be met. 
By solving this inequality, we find that it implies $d\leq 5$. 
Moreover, for $m_1>1$, it also implies~$d\leq 5$.
\end{remark}

Next, we show that
conditions of Proposition \ref{D-bound} hold,
almost surely. 
%
Using Lemma \ref{regularity of Psi}, Lemma \ref{P-est-S*} and the Bernstein inequality, we have the following proposition.

\begin{proposition}\label{R-W,Y}
Let $\frac{1}{2}<s<1$, $z_N$ and $\wt z_N$ are defined in  \eqref{def t-z_N}. 
Then, for fixed $T,\e>0$, there exists $\lambda=\lambda(T,\e,\|(u_0,u_1)\|_{\H^s})>0$ such that
\begin{equation*}
\mathbb{P}(\{\omega\in\Omega:\|\wt\Psi_N\|_{Z(T)}+\|\wt z_N\|_{Y(T)}+\|\pa_t\Psi_N\|_{L_T^{10}L_x^{\frac{10}{3}}}> \lambda\})<\e.
\end{equation*}
\end{proposition}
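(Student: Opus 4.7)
The strategy is to split $\wt\Psi_N = z_N + \Psi_N$ and, using the triangle inequality, reduce each norm appearing in the quantity $\|\wt\Psi_N\|_{Z(T)} + \|\wt z_N\|_{Y(T)} + \|\pa_t\Psi_N\|_{L_T^{10}L_x^{10/3}}$ to a finite sum of individual Lebesgue (or Sobolev) norms of either a stochastic convolution piece (to be controlled by Lemma~\ref{regularity of Psi}) or a randomized free evolution (to be controlled by Lemma~\ref{P-est-S*}). Each of the constituent estimates yields a sub-Gaussian tail bound of the form $\mathbb{P}(\|\cdot\|>\mu)\lesssim \exp(-c\mu^2/K)$, with the implicit constant $K$ depending only on $T$, $\|\phi\|_{\HS(L^2;H^1)}$ and $\|(u_0,u_1)\|_{\H^s}$, so a union bound over finitely many pieces combined with a choice of $\lambda$ large enough will conclude the proof.

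Concretely, for the pieces involving $\Psi_N$ I would invoke Lemma~\ref{regularity of Psi}(iii) with $q=6$, $\sigma=1$, $s=1$ to control $\|\Psi_N\|_{L_t^6L_x^{30/7}}$; Lemma~\ref{regularity of Psi}(i) combined with the Sobolev embedding $H^2(\R^5)\hookrightarrow L^{10}(\R^5)$ to control the $L_T^\infty L_x^{10/3}$ and $L_T^\infty L_x^5$ norms; and Lemma~\ref{regularity of Psi}(iv) directly for $\|\pa_t\Psi_N\|_{L_T^{10}L_x^{10/3}}$. Uniformity in $N$ is automatic since $P_{\leq N}$ is bounded on $L^p$ for $1<p<\infty$ and is non-expansive on all Sobolev norms. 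For the pieces involving $z_N$ or $\wt z_N$, Lemma~\ref{P-est-S*}(i) handles the finite-time-exponent norms $\|z_N\|_{L_t^6L_x^{30/7}}$ and $\|\wt z_N\|_{L_t^{10}L_x^{10/3}}$, while Lemma~\ref{P-est-S*}(ii), applied with some $\varepsilon\in(0,s)$, covers the $L_T^\infty L_x^r$ pieces for $r\in\{10/3,5\}$ after noting that $(u_{0,N},u_{1,N})\in\H^s\subset\H^\varepsilon$ with norm bounded uniformly in $N$.

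The only non-routine entry is the weighted term $\|\langle\nabla\rangle^{s-\delta}\wt z_N\|_{L_T^\infty L_x^5}$. Here I would use the fact that $\langle\nabla\rangle^{s-\delta}$ commutes with the wave propagator $\wt S(t)$ and with each Fourier multiplier $P_{\leq N}\psi(D-n)$ defining the randomization, so that
\[
\|\langle\nabla\rangle^{s-\delta}\wt z_N\|_{L_T^\infty L_x^5}
=\|\wt S(t)(\langle\nabla\rangle^{s-\delta}u_{0,N}^\omega,\langle\nabla\rangle^{s-\delta}u_{1,N}^\omega)\|_{L_T^\infty L_x^5},
\]
and then apply Lemma~\ref{P-est-S*}(ii) to the shifted data $(\langle\nabla\rangle^{s-\delta}u_0,\langle\nabla\rangle^{s-\delta}u_1)\in\H^\delta$, where $\delta$ is chosen so that $\tfrac12+\delta<s$ as in \eqref{def_WY}; note that since $s>\tfrac12$ one can arrange $\delta>0$, which is exactly the regularity demanded by that lemma.

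There is no substantive obstacle here; the proof is essentially a bookkeeping exercise of itemizing each Lebesgue norm contained in $Z(T)$, $Y(T)$ and $L_T^{10}L_x^{10/3}$, invoking the appropriate probabilistic estimate, and then choosing $\lambda$ sufficiently large so that the sum of the resulting (finitely many) sub-Gaussian tails is bounded by $\varepsilon$. The only subtlety worth double-checking is the commutativity observation used for $\langle\nabla\rangle^{s-\delta}\wt z_N$ and the fact that the threshold $s>\tfrac12$ leaves just enough room, after the derivative loss $s-\delta$, to apply Lemma~\ref{P-est-S*}(ii) with strictly positive $\H^\delta$-regularity.
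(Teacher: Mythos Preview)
Your proposal is correct and takes essentially the same approach as the paper, which simply states that the proposition follows from Lemma~\ref{regularity of Psi}, Lemma~\ref{P-est-S*}, and the Bernstein inequality. Your commutativity argument for the $\langle\nabla\rangle^{s-\delta}\wt z_N$ term is exactly the Fourier-multiplier manipulation underlying the paper's invocation of Bernstein, and the only cosmetic imprecision is that Lemma~\ref{regularity of Psi}(i)--(iii) as stated yield moment bounds rather than explicit sub-Gaussian tails, but Markov's inequality (or Gaussian hypercontractivity) is already enough to make the probability small for large~$\lambda$.
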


At this stage, we are able to establish an 
energy bound for the solution to \eqref{eq_SHW3} at the truncation level
by combining the results of
 Proposition \ref{D-bound} and Proposition \ref{R-W,Y}.
In particular, we only prove a probabilistic
energy bound, uniformly in $N$, for the approximating random solutions $v^\o_N$.

\begin{proposition}
\label{energy-b}
Let $1/2<s<1$ and $N\geq 1$ dyadic. 
Given $T,\e>0$, there exists 
$\wt\Omega_{N,T,\e}\subset\Omega$ such that
\smallskip
\begin{enumerate}[{\rm (i)}]
\item $P(\wt\Omega_{N,T,\e}^c)<\e.$

\medskip
\item There exists a constant $C=C(T,\e,\|(u_0,u_1)\|_{\H^s(\R^5)})>0$, which is finite,
such that 
for any solution $v_N^\omega$ to \eqref{eq_SHW3} with $\om\in\wt\Omega_{N,T,\e}$,
the following energy bound holds:
\begin{equation*}
\sup_{t\in[0,T]}\|(v_N^\omega(t),\pa_tv_N^\omega(t))\|_{\H^1(\R^5)}\leq C.
\end{equation*}

\end{enumerate}
\end{proposition}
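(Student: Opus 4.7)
The plan is to obtain Proposition \ref{energy-b} as an essentially immediate consequence of the two preceding propositions, with Proposition \ref{R-W,Y} supplying the probabilistic control on the random linear and stochastic objects and Proposition \ref{D-bound} providing the (purely deterministic) energy bound conditional on this control. More concretely, given $T, \e > 0$, I would first invoke Proposition \ref{R-W,Y} to produce $\lambda = \lambda(T, \e, \|(u_0,u_1)\|_{\H^s}) > 0$ (crucially independent of the dyadic truncation $N$, since $P_{\leq N}$ is uniformly bounded on all spaces involved) such that the exceptional set
\[
\wt\Omega_{N,T,\e}^c := \big\{\omega \in \Omega : \|\wt\Psi_N\|_{Z(T)} + \|\wt z_N\|_{Y(T)} + \|\pa_t\Psi_N\|_{L_T^{10}L_x^{10/3}} > \lambda\big\}
\]
satisfies $\mathbb{P}(\wt\Omega_{N,T,\e}^c) < \e$. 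Defining $\wt\Omega_{N,T,\e}$ to be the complement yields part (i).

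For part (ii), observe that by construction every $\omega \in \wt\Omega_{N,T,\e}$ satisfies the assumption \eqref{zN t-zN bound} of Proposition \ref{D-bound} with constant $K = \lambda$. Hence Proposition \ref{D-bound} applies to the smooth global solution $v_N^\omega$ of \eqref{eq_SHW3} and delivers
\[
\sup_{t \in [0,T]} \|(v_N^\omega(t), \pa_t v_N^\omega(t))\|_{\H^1(\R^5)}^2 \leq C(\lambda, T),
\]
for every $\omega \in \wt\Omega_{N,T,\e}$. Setting $C = C(T, \e, \|(u_0,u_1)\|_{\H^s(\R^5)}) := C(\lambda, T)^{1/2}$, which is finite and independent of $N$, yields the desired energy bound.

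Since all nontrivial analysis (the Hartree potential energy bound via Lemma \ref{Riesz-Cha}, the time integration by parts of Oh--Pocovnicu, the Gronwall argument in Proposition \ref{D-bound}, and the probabilistic Strichartz estimates underlying Proposition \ref{R-W,Y}) has already been carried out upstream, no genuine obstacle arises in the proof. The only point worth emphasizing is the uniformity in $N$ of both the constant $\lambda$ in Proposition \ref{R-W,Y} and the constant $C(\lambda, T)$ in Proposition \ref{D-bound}; this uniformity is essential for the subsequent limiting argument (via the perturbation lemma) that upgrades this bound on smooth approximating solutions $v_N^\omega$ to the almost sure global well-posedness statement of Theorem \ref{thm-r-i}~(ii).
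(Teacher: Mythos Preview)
Your proposal is correct and follows essentially the same approach as the paper: the paper defines $\wt\Omega_{N,T,\e}$ exactly as you do (the set where $\|\wt\Psi_N\|_{Z(T)}+\|\wt z_N\|_{Y(T)}+\|\pa_t\Psi_N\|_{L_T^{10}L_x^{10/3}}\leq \lambda$ with $\lambda$ from Proposition~\ref{R-W,Y}) and then simply invokes Propositions~\ref{R-W,Y} and~\ref{D-bound}. Your write-up is in fact more detailed than the paper's, correctly emphasizing the uniformity in $N$ that is needed downstream.
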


\begin{proof}

We define 
 $\wt\Omega_{N,T,\e}\subset\Omega$ by
\begin{equation*}
\wt\Omega_{N,T,\e}=
\{\omega\in\Omega:\|\wt\Psi_N\|_{Z(T)}+\|\wt z_N^\omega\|_{Y(T)}+\|\pa_t\Psi_N\|_{L_T^{10}L_x^{\frac{10}{3}}}\leq \wt\lambda\},
\end{equation*}

\noindent
where $\wt\lambda$ is given as in Proposition \ref{R-W,Y}.
Then, the proof follows from Proposition \ref{R-W,Y} and Proposition \ref{D-bound}.

\end{proof}

\subsection{Proof of Theorem \ref{thm-r-i}}

From Remark \ref{RM:local}, it remains to prove part (ii) of Theorem \ref{thm-r-i}. 
The proof follows directly from the argument used in \cite[Proposition 6.1]{OP16}, 
adapted to our context using Propositions \ref{condition LWP} and \ref{energy-b}. 
The details are omitted here for brevity.

\begin{ackno}\rm

The authors extend their sincere gratitude to Tadahiro Oh 
for his support and advice throughout the entire project, essentially for his invaluable suggestion regarding the consideration of stochastic noises.
G.L.~was supported by the EPSRC New Investigator Award (grant no. EP/S033157/1)
and the European Research Council (grant no. 864138 ``SingStochDispDyn'').
T.Z.~was supported by the National Natural Science Foundation of China (Grant No. 12101040, 12271051, and 12371239) and by a grant from the China Scholarship Council (CSC). T.Z.~would also like to thank the 
University of Edinburgh for its hospitality where this manuscript was prepared. 
The authors would like to thank the
anonymous referee for their helpful comments which have greatly improved the presentation
of this manuscript.
\end{ackno}





\begin{thebibliography}{99}
\bibitem{BCD11}
H.~Bahouri, J.~Chemin, R.~Danchin,
 \emph{Fourier analysis and nonlinear partial differential equations}, Grundlehren der mathematischen Wissenschaften [Fundamental Principles of Mathematical Sciences]
Springer, Heidelberg, 2011. xvi+523 pp.


\bibitem{BG99}
H.~Bahouri, P.~G\'erard,
\emph{High frequency approximation of solutions to critical nonlinear wave equations}, Amer. J. Math. 121 (1999), no. 1, 131--175.


\bibitem{BOP15}
\'A.~B\'enyi, T.~Oh, O.~Pocovnicu, \emph{Wiener randomization on unbounded domains and an application to almost sure well-posedness of NLS}, Excursions in harmonic analysis. Vol. 4, 3--25, Appl. Numer. Harmon. Anal., Birkh\"{a}user/Springer, Cham, 2015.



\bibitem{BOP15'}
\'A.~B\'enyi, T.~Oh, O.~Pocovnicu, \emph{On the probabilistic Cauchy theory of the cubic nonlinear Schr\"{o}dinger equation on $\mathbb{R}^d$, $d\geq3$}, Trans. Amer. Math. Soc. Ser. B 2 (2015), 1--50.

\bibitem{BOP-19}
\'A.~B\'enyi, T.~Oh, O.~Pocovnicu, 
\emph{Higher order expansions for the probabilistic local Cauchy theory of the cubic nonlinear Schr\"odinger equation on $\R^3$}, Trans. Amer. Math. Soc. Ser. B 6 (2019), 114--160.

\bibitem{BOP-19-2}
\'A.~B\'enyi, T.~Oh, O.~Pocovnicu,
\emph{On the probabilistic Cauchy theory for nonlinear dispersive PDEs}, Landscapes of time-frequency analysis, 
 1--32. Appl. Numer. Harmon. Anal.
Birkh\"auser/Springer, Cham, 2019






\bibitem{B94}
J.~Bourgain, \emph{Periodic nonlinear Schr\"{o}dinger equation and invariant measures}, Comm. Math. Phys. 166 (1994), no. 1, 1--26.

\bibitem{B96}
J.~Bourgain, \emph{Invariant measures for the $2$D-defocusing nonlinear Schr\"{o}dinger equation}, Comm. Math. Phys. 176 (1996), no. 2, 421--445.

\bibitem{B97}
J.~Bourgain, \emph{Invariant measures for the Gross-Piatevskii equation}, J. Math. Pures Appl. (9) 76 (1997), no. 8, 649--702.



\bibitem{Bringmann21}
B.~Bringmann, 
\emph{
Almost sure local well-posedness for a derivative nonlinear wave equation},
Int. Math. Res. Not. IMRN 2021, no. 11, 8657--8697.

\bibitem{BK00}
P.~Brenner, P.~Kumlin, \emph{On wave equations with supercritical nonlinearities}, Arch. Math. (Basel) 74 (2000), no. 2, 129--147.

\bibitem{BLL23}
E.~Brun, G.~Li, R.~Liu, \emph{Global well-posedness of the energy-critical stochastic nonlinear wave equations}, J. Differential Equations 397 (2024), 316--348.


\bibitem{BT07}
N.~Burq, N.~Tzvetkov, \emph{Invariant measure for a three dimensional nonlinear wave equation}, Int. Math. Res. Not. IMRN (2007), no.22, Art. ID rnm108, 26 pp.


\bibitem{BT08-1}
N.~Burq, N.~Tzvetkov,
\emph{Random data Cauchy theory for supercritical wave equations}, I. Local theory. Invent. Math. 173 (2008), no. 3, 449--475.


\bibitem{BT08-2}
N.~Burq, N.~Tzvetkov,
\emph{Random data Cauchy theory for supercritical wave equations}, II. A global existence result. Invent. Math. 173 (2008), no. 3, 477--496.

\bibitem{BT14}
N.~Burq, N.~Tzvetkov,
\emph{Probabilistic well-posedness for the cubic wave equation}, J. Eur. Math. Soc. (JEMS) 16 (2014), no. 1, 1--30. 


\bibitem{CCMNS20}
S.~Chanillo, M.~Czubak, D.~Mendelson, A.~Nahmod, G.~Staffilani,
\emph{Almost sure boundedness of iterates for derivative nonlinear wave equations},
Comm. Anal. Geom. 28 (2020), no. 4, 943--977.


\bibitem{CG13}
X.~Cheng, Y.~Gao,
\emph{Small data global well-posedness for the nonlinear wave equation with nonlocal nonlinearity}, Math. Methods Appl. Sci. 36 (2013), no. 1, 99--112.

\bibitem{CL22}
K.~Cheung, G.~Li,
\emph{Global well-posedness of the 4-D energy-critical stochastic nonlinear Schr\"odinger equations with non-vanishing boundary condition}, 
Funkcial. Ekvac. 
65 (2022), no. 3, 287--309.


\bibitem{CLO21}
K.~Cheung, G.~Li, T. Oh,
{\it Almost conservation laws for stochastic nonlinear Schr\"odinger equations}, 
J. Evol. Equ. 21 (2021), no. 2, 1865--1894.




\bibitem{CCT03}
M.~Christ, J.~Colliander, T.~Tao, \emph{Ill-posedness for nonlinear Schr\"{o}dinger and wave equations},
arXiv:math/0311048 [math.AP].


\bibitem{CKSTT08}
 J.~Colliander, M.~Keel, G.~Staffilani, H.~Takaoka, T.~Tao, \emph{Global well-posedness
and scattering for the energy-critical nonlinear Schr\"{o}dinger equation in $\R^3$}, Ann. of Math. (2) 167 (2008), no. 3, 767--865. 

\bibitem{CO12}
J.~Colliander, T.~Oh,
\emph{Almost sure well-posedness of the cubic nonlinear Schr\"{o}dinger equation below
$L^2(\mathbb{T})$}, Duke Math. J. 161 (2012), no. 3, 367--414.


\bibitem{DD02}
 G.~Da Prato, A.~Debussche, 
 {\it Two-dimensional Navier-Stokes equations driven by a space-time white noise},
  J. Funct. Anal. 196 (2002), no. 1, 180--210.




\bibitem{DZ14}
G.~Da Prato, J.~Zabczyk,
\emph{Stochastic equations in infinite dimensions.}
Second edition. Encyclopedia Math. Appl., 152
Cambridge University Press, Cambridge, 2014. xviii+493 pp.


\bibitem{D12}
Y.~Deng,
\emph{Two-dimensional nonlinear Schr\"{o}dinger equation with random radial data}, Anal. PDE 5 (2012), no. 5, 913--960. 

\bibitem{DLM20}
B.~Dodson, J.~L\"{u}hrmann, D.~Mendelson, \emph{Almost sure scattering for the 4D energy-critical defocusing nonlinear wave equation with radial data}, Amer. J. Math. 142 (2020), no. 2, 475--504.


\bibitem{FO20}
J.~Forlano, M.~Okamoto, \emph{
A remark on norm inflation for nonlinear wave equations},
Dyn. Partial Differ. Equ. 17 (2020), no. 4, 361--381.



\bibitem{G90}
M.~Grillakis, \emph{Regularity and asymptotic behaviour of the wave equation with a critical nonlinearity}, Ann. of Math. (2) 132 (1990), no. 3, 485--509.

\bibitem{GV95}
J.~Ginibre, G.~Velo, \emph{Generalized Strichartz inequalities for the wave equation}, J. Funct. Anal. 133 (1995), no. 1, 50--68.

\bibitem{G60}
E.~Gross, \emph{Quantum theory of interacting bosons}, Ann. Physics 9 (1960), 292--324.

\bibitem{GKO18}
M.~Gubinelli, H.~Koch, T.~Oh, \emph{Renormalization of the two-dimensional stochastic nonlinear wave equations}, Trans. Amer. Math. Soc. 370 (2018) no. 10, 7335--7359.

\bibitem{GKOT22}
M.~Gubinelli, H.~Koch, T.~Oh, L.~Tolomeo, \emph{Global dynamics for the two-dimensional stochastic nonlinear wave equations}, Int. Math. Res. Not. IMRN (2022), no. 21, 16954--16999.


\bibitem{GKO23}
M.~Gubinelli, H.~Koch, T.~Oh, \emph{Paracontrolled approach to the three-dimensional stochastic nonlinear wave equation with quadratic nonlinearity}, 
%
J. Eur. Math. Soc. (JEMS) 26 (2024), no. 3, 817--874.


\bibitem{H28}
D.~Hartree,
\emph{The Wave Mechanics of an Atom with a Non-Coulomb Central Field. Part I. Theory and Methods},
Theory and methods. Proc. Comb. Phil. Soc. 24, 89--132 (1928).

\bibitem{H00}
K.~Hidano,
\emph{Small data scattering and blow-up for a wave equation with a cubic convolution}, Funkcial. Ekvac. 43 (2000), no. 3, 559--588.

\bibitem{IM07}
S.~Ibrahim, M.~Majdoub, N.~Masmoudi, \emph{Ill-posedness of $H^1$-supercritical waves}, C. R. Math. Acad. Sci. Paris 345 (2007), no. 3, 133--138.

\bibitem{K94}
L.~Kapitanski, \emph{Global and unique weak solutions of nonlinear wave equations}, Math. Res. Lett. 1 (1994), no. 2, 211--223.

\bibitem{KT98}
M.~Keel, T.~Tao, \emph{Endpoint Strichartz estimates}, Amer. J. Math. 120 (1998), no. 5, 955--980.

\bibitem{KM08}
C.~Kenig, F.~Merle, \emph{Global well-posedness, scattering and blow-up for the energy-critical focusing non-linear wave equation}, Acta Math. 201 (2008), no. 2, 147--212.

\bibitem{KOPV12}
R.~Killip, T.~Oh, O.~Pocovnicu, M.~Vi\c{s}an, \emph{Global well-posedness of the Gross-Pitaevskii and cubic-quintic nonlinear Schr\"{o}dinger equations with non-vanishing boundary conditions}, Math. Res. Lett. 19 (2012), no. 5, 969--986.

\bibitem{KOC22}
J.~Kuan, T.~Oh, S.~\v{C}ani\'{c}, 
\emph{Probabilistic global well-posedness for a viscous nonlinear wave equation modeling fluid-structure interaction},
Appl. Anal. 101 (2022), no. 12, 4349--4373.



\bibitem{Latocca21}
M.~Latocca,
\emph{Almost sure existence of global solutions for supercritical semilinear wave equations},
J. Differential Equations 273 (2021), 83--121.



\bibitem{Lebeau05}
G.~Lebeau, \emph{Perte de r\'{e}gularit\'{e} pour les \'{e}quations d'ondes sur-critiques}, (French) [Loss of regularity for super-critical wave equations] Bull. Soc. Math. France 133 (2005), no. 1, 145--157.




\bibitem{LL01}
E.~Lieb and M.~Loss, \emph{Analysis},
Second edition
Grad. Stud. Math., 14
American Mathematical Society, Providence, RI, 2001. xxii+346 pp.



\bibitem{LS95}
H.~Lindblad, C.~Sogge, \emph{On existence and scattering with minimal regularity for semilinear wave equations}, J. Funct. Anal. 130 (1995), no. 2, 357--426

\bibitem{LM14}
J.~L\"{u}hrmann, D.~Mendelson.
\emph{Random data Cauchy theory for nonlinear wave equations of power-type on $\R^3$}, Comm. Partial Differential Equations 39 (2014), no. 12, 2262--2283. 

\bibitem{LM16}
J.~L\"{u}hrmann, D.~Mendelson. \emph{On the almost sure global well-posedness of energy sub-critical nonlinear wave equation $\R^3$}, New York J. Math. 22 (2016), 209--227.

\bibitem{NORS12}
A.~Nahmod, T.~Oh, L.~Rey-Bellet, G.~Staffilani, \emph{Invariant weighted Wiener measures and almost sure global well-posedness for the periodic derivative NLS}, J. Eur. Math. Soc. (JEMS) 14 (2012), no. 4, 1275--1330.



\bibitem{Mckean95}
H.P.~McKean, Statistical mechanics of nonlinear wave equations. IV. Cubic Schr\"odinger, Comm. Math.
Phys. 168 (1995), no. 3, 479--491.
 Erratum: Statistical mechanics of nonlinear wave equations. IV. Cubic
Schr\"odinger, Comm. Math. Phys. 173 (1995), no. 3, 675.





\bibitem{MS82}
G.~Menzala, W.~Strauss, 
\emph{On a wave equation with a cubic convolution}, J. Differential Equations 43 (1982), no. 1, 93--105.

\bibitem{M89}
K.~Mochizuki, 
\emph{On small data scattering with cubic convolution nonlinearity}, J. Math. Soc. Japan 41 (1989), no. 1, 143--160.



\bibitem{MXZ08}
C.~Miao, G,~Xu, L.~Zhao,
\emph{The Cauchy problem of the Hartree equation}, J. Partial Differential Equations 21(2008), no.1, 22--44.


\bibitem{MXZ11}
C.~Miao, G,~Xu, L.~Zhao, 
\emph{Global well-posedness and scattering for the energy-critical, defocusing Hartree equation in $\R^{1+n}$}, Comm. Partial Differential Equations 36 (2011), no. 5, 729--776.

\bibitem{MZZ14}
C.~Miao, J.~Zhang, J.~Zheng, \emph{The defocusing energy-critical wave equation with a cubic convolution}, Indiana Univ. Math. J. 63 (2014), no. 4, 993--1015.




\bibitem{MZZ15}
C.~Miao, J.~Zhang, J.~Zheng, \emph{Scattering theory for the radial $\dot H^{\frac12}$-critical wave equation with a cubic convolution}, J. Differential Equations 259 (2015), no. 12, 7199--7237.




\bibitem{Oh09}
T.~Oh, \emph{Invariant Gibbs measures and a.s. global well-posedness for coupled KdV systems}, Differential Integral Equations 22 (2009), no. 7-8, 637--668. 

\bibitem{OO20}
T.~Oh, M.~Okamoto, \emph{On the stochastic nonlinear Schr\"{o}dinger equations at critical regularities}, Stoch. Partial Differ. Equ. Anal. Comput. 8 (2020), no.4, 869--894.


\bibitem{OOP19}
T.~Oh, M.~Okamoto, O.~Pocovnicu, \emph{On the probabilistic well-posedness of the nonlinear Schr\"odinger equations with non-algebraic nonlinearities}, Discrete Contin. Dyn. Syst. 39 (2019), no. 6, 3479--3520.

\bibitem{OOPT23}
T.~Oh, M.~Okamoto, O.~Pocovnicu, N.~Tzvetkov, 
\emph{A remark on randomization of a general function of negative regularity},  
Proc. Amer. Math. Soc. Ser. B 11 (2024), 538--554.


\bibitem{OOR20}
T.~Oh, M.~Okamoto, T.~Robert, \emph{A remark on triviality for the two-dimensional stochastic nonlinear wave equation}, Stochastic Process. Appl. 130 (2020), no. 9, 5838--5864.

\bibitem{OOTz}
T.~Oh, M.~Okamoto, N.~Tzvetkov, \emph{Uniqueness and non-uniqueness of the Gaussian free field evolution under the two-dimensional Wick ordered cubic wave equation},  
Ann. Inst. Henri Poincar\'e Probab. Stat.
60 (2024), no. 3, 1684--1728.


\bibitem{OP16}
T.~Oh, O.~Pocovnicu, \emph{Probabilistic global well-posedness of the energy-critical defocusing quintic nonlinear wave equation on $\R^3$}, J. Math. Pures Appl. (9) 105 (2016), no. 3, 342--366.


\bibitem{OP17}
T.~Oh, O.~Pocovnicu,
\emph{A remark on almost sure global well-posedness of the energy-critical defocusing nonlinear wave equations in the periodic setting}, Tohoku Math. J. 69 (2017), no.3,  455--481.


\bibitem{OPT22}
T.~Oh, O.~Pocovnicu, N.~Tzvetkov, \emph{Probabilistic local well-posedness of the cubic nonlinear wave equation in negative Sobolev spaces}, Ann. Inst. Fourier (Grenoble) 72 (2022) no. 2, 771--830.


\bibitem{OPW20}
T.~Oh, O.~Pocovnicu, Y.~Wang, \emph{On the stochastic nonlinear Schr\"{o}dinger equations with nonsmooth additive noise}, 
Kyoto J. Math. 60 (2020), no. 4, 1227--1243.


\bibitem{OPW21}
T.~Oh, T.~Robert, P.~Sosoe, Y.~Wang, \emph{On the two-dimensional hyperbolic stochastic sine-Gordon equation}, Stoch. Partial Differ. Equ. Anal. Comput. 9 (2021), no. 1, 1--32.


\bibitem{ORT23}
T.~Oh, T.~Robert, N.~Tzvetkov, \emph{Stochastic nonlinear wave dynamics on compact surfaces}, Ann. H. Lebesgue 6 (2023), 161--223.

\bibitem{OWZ22}
T.~Oh, Y.~Wang, Y.~Zine, \emph{Three-dimensional stochastic cubic nonlinear wave equation with almost space-time white noise}, Stoch. Partial Differ. Equ. Anal. Comput. 10 (2022), no. 3, 898--963. 


\bibitem{P17}
O.~Pocovnicu, \emph{Almost sure global well-posedness for the energy-critical defocusing nonlinear wave equation on $\mathbb{R}^ d$, $d=4$ and $5$}, J. Eur. Math. Soc. (JEMS) 19 (2017), no. 8, 2521--2575.

\bibitem{RY99} 
D.~Revuz, M.~Yor,
{\it Continuous martingales and Brownian motion}, 
Third edition.
Grundlehren Math. Wiss., 293[Fundamental Principles of Mathematical Sciences]
Springer-Verlag, Berlin, 1999. xiv+602 pp.

\bibitem{SS94}
J.~Shatah, M.~Struwe, \emph{Well posedness in the energy space for semilinear wave equation with critical growth}, Int. Math. Res. Not. IMRN (1994), no. 7, 303--309.

\bibitem{S68}
W.~Strauss, \emph{Decay and asymptotics for $\Box u=F(u)$}, J. Functional Analysis 2 (1968), 409--457.

\bibitem{S88}
M.~Struwe, \emph{Globally regular solutions to the $u^5$ Klein-Gordon equation},
 Ann. Scuola Norm. Sup. Pisa Cl. Sci. (4) 15 (1988), no. 3, 495--513. 




 
\bibitem{SX16}
C.~Sun, B.~Xia, 
\emph{Probabilistic well-posedness for supercritical wave equations with periodic boundary condition on dimension three}, 
Illinois J. Math. 60(2016), no.2, 481--503.





\bibitem{Tao06}
T.~Tao, \emph{Spacetime bounds for the energy-critical nonlinear wave equation in three spatial dimensions}, Dyn. Partial Differ. Equ. 3 (2006), no. 2, 93--110.





\bibitem{TVZ07}
T.~Tao, M.~Vi\c{s}an, X.~Zhang,
\emph{The nonlinear Schr\"{o}dinger equation with combined power-type nonlinearities}, Comm. Partial Differential Equations 32 (2007), no. 7-9, 1281--1343.

\bibitem{TT10}
L.~Thomann, N.~Tzvetkov, \emph{Gibbs measure for the periodic derivative nonlinear
Schr\"{o}dinger equation}, Nonlinearity 23 (2010), no. 11, 2771--2791.

\bibitem{Tz06}
N.~Tzvetkov,
\emph{Invariant measures for the nonlinear Schr\"{o}dinger equation on the disc}, Dyn. Partial Differ. Equ. 3 (2006), no. 2, 111--160. 


\bibitem{V07}
M.~Vi\c{s}an, \emph{The defocusing energy-critical nonlinear Schr\"{o}dinger equation in higher dimensions}, Duke Math. J. 138 (2007), no. 2, 281--374.


\bibitem{Zhang23}
D.~Zhang, \emph{Stochastic nonlinear Schr\"{o}dinger equations in the defocusing mass and energy
critical cases},
Ann. Appl. Probab. 33 (2023), no. 5, 3652--3705.

\end{thebibliography}
\end{document}